\numberwithin{equation}{section}
\newtheorem{thm}{Theorem}[section]
\newtheorem{cor}[thm]{Corollary}
\newtheorem{lem}[thm]{Lemma}
\newtheorem{prop}[thm]{Proposition}
\newtheorem{defn}[thm]{Definition}
\newtheorem{ex}[thm]{Example}
\newcommand{\x}{\mathbf{x}}
\newcommand{\xxi}{{\boldsymbol{\xi}}}
\newcommand{\zzeta}{{\boldsymbol{\zeta}}}
\newcommand{\N}{\mathbb{N}}
\newcommand{\Z}{\mathbb{Z}}
\newcommand{\Q}{\mathbb{Q}}
\newcommand{\R}{\mathbb{R}}
\newcommand{\Idem}{\mathbb{I}}
\newcommand{\MM}{\mathbb{M}}
\newcommand{\BD}{\mathcal{BD}}
\newcommand{\D}{\mathcal{D}}
\newcommand{\F}{\mathcal{F}}
\newcommand{\U}{\mathcal{U}}
\newcommand{\V}{\mathcal{V}}
\newcommand{\Fin}{\textit{Fin}}
\newcommand{\bN}{\beta\mathbb{N}}
\newcommand{\hN}{{}^*\N}
\newcommand{\hf}{{}^*f}
\newcommand{\ueq}{{\,{\sim}_{{}_{\!\!\!\!\! u}}\;}}
\title{Ramsey properties of 
\\
nonlinear Diophantine equations}
\author{Mauro Di Nasso\thanks
{Supported by PRIN 2012 ``Models and Set'', 
MIUR (Italian Ministry of University and Research).}\\
Department of Mathematics, University of Pisa,\\
Largo Pontecorvo 5, 56127 Pisa, ITALY\\
mauro.di.nasso@unipi.it 
\and Lorenzo Luperi Baglini\thanks{Supported by grant M1876-N35 of the Austrian Science Fund FWF.}\\
University of Vienna, Faculty of Mathematics,\\
Oskar-Morgenstern-Platz 1, 1090 Vienna, AUSTRIA,\\
lorenzo.luperi.baglini@univie.ac.at}
\begin{document}
\date{}
\maketitle

\begin{abstract}
We prove general sufficient and necessary conditions
for the partition regularity of Diophantine equations,
which extend the classic Rado's Theorem by covering
large classes of nonlinear equations. 
Sufficient conditions are obtained by exploiting
algebraic properties in the space of ultrafilters $\beta\N$,
grounding on combinatorial properties of positive density sets
and IP sets. Necessary conditions are proved by a new technique
in nonstandard analysis, based on the use of the relation 
of $u$-equivalence for the hypernatural numbers $\hN$.
\end{abstract}

\section*{Introduction}
Ramsey theory studies structural combinatorial properties
that are preserved under finite partitions. 
An active area of research in this framework
has overlaps with additive number theory,
and it focuses on partition properties of the natural numbers
related to their semiring structure.
Historically, the first result of this kind dates back to 1916;
it is a combinatorial
lemma that I. Schur \cite{S16} used to prove
the existence of non-trivial solutions to Fermat
equations $x^n+y^n=z^n$ modulo $p$ for
all sufficiently large primes $p$.
Precisely, Schur's Lemma 
states that in every finite coloring (partition)
of the natural numbers, one finds a monochromatic 
triple of the form $a,b,a+b$.
Such a property can be phrased by saying that the
equation $x+y=z$ is partition regular on $\N$. 
Another simple equation that is partition regular
is $x+y=2z$; indeed, this amounts to saying that
in every finite coloring of $\N$ one finds
a $3$-term monochromatic arithmetic progression $a, a+d, a+2d$.
(We recall that by van der Waerden's Theorem \cite{vdW27},
another classic result in Ramsey theory that was proved in 1927,
in every finite coloring of $\N$ one actually finds 
arbitrarily long monochromatic arithmetic progressions.)
However, simple examples of equations that are not
partition regular are easily found; \emph{e.g.} $x+y=3z$.

In 1933, R. Rado \cite{R33} completely
characterized partition regular 
systems of linear Diophantine equations on $\N$,
by isolating a simple sufficient and necessary 
condition on the coefficients, the so-called
\emph{column property}. Here is the
formulation for a single equation.\footnote
{~
For a full treatment of Rado's Theorem,
see \S 3.2 and \S 3.3 of \cite{GRS90}. }

\smallskip
\noindent
\textbf{Rado's Theorem.}
\emph{A linear Diophantine equation with no constant term
$$c_{1}x_{1}+\dots+c_{n}x_{n}=0$$
is partition regular on $\N$ if and only if the following condition
is satisfied:
\begin{itemize}
\item
``There exists a nonempty set $J\subseteq\{1,\dots,n\}$ such that
$\sum_{j\in J} c_{j}=0$.''
\end{itemize}}

Over the years, an active research focused on 
possible extensions of Rado's Theorem in several directions. 
A large amount of interesting
results have been obtained during the last twenty years
about the various aspects of partition regularity of 
finite and infinite systems of linear equations 
(see, \emph{e.g.}, \cite{BHLS15,BJM15,DHLS14,DHLL95,
GHL14,H07,HL93,HL06,HLS02,HLS03,HLS15,LR07,SV14}).
However, progress on the nonlinear case has been scarce,
and structural theorems that provide an 
overall understanding of Ramsey properties
of nonlinear Diophantine equations are still missing.

Let us briefly recall all the relevant results on this topic that
we are aware of. The simplest result is the multiplicative formulation of Rado's Theorem.

\smallskip
\noindent
\textbf{Multiplicative Rado's Theorem.}\emph{A nonlinear Diophantine equation of the form 
\begin{center} $\prod\limits_{i=1}^{n} x_{i}^{c_{i}}=1$ \end{center}
is partition regular on $\N$ if and only if the following condition is satisfied:
\begin{itemize}
	\item ``There exists a nonempty set $J\subseteq\{1,\dots,n\}$ such that
$\sum_{j\in J} c_{j}=0$.''
\end{itemize}}

The first attempt for a systematic study of the nonlinear case is found
in the paper \cite{L91} of 1991, where H. Lefmann 
characterized the partition regularity of systems of homogeneous
polynomials where every monomial
contains a single variable raised to the same exponent 
$1/k$, and where different equations have different variables. 
Here is the formulation for a single equation.

\smallskip
\noindent
\textbf{Lefmann's Theorem.}
\emph{Let $k\in\N$. A Diophantine equation of the form
$$c_{1}x_{1}^{1/k}+\dots+c_{n}x_{n}^{1/k}=0$$
is partition regular on $\N$ if and only if  ``Rado's condition'' 
is satisfied:
\begin{itemize}
\item
``There exists a nonempty set $J\subseteq\{1,\dots,n\}$ such that
$\sum_{j\in J} c_{j}=0$.''
\end{itemize}}

For instance, a consequence of Lefmann's Theorem
is that the analog of Schur's Lemma
for reciprocals is valid, \emph{i.e.}
the equation $1/x+1/y=1/z$ is partition regular.

Most of the research on the partition regularity of nonlinear Diophantine equations has been done in the past 10 years.

In 2006,  A. Khalfalah and E. Szemer\'edi \cite{KS06}
proved that if $P(z)\in\Z[z]$ takes even values
on some integer, then the equation $x+y=P(z)$ 
is ``partially'' partition regular in the variables $x$ and $y$,
\emph{i.e.}, for every finite coloring of $\N$ one finds a solution 
$x,y,z$ where $x$ and $y$ are monochromatic.

In the paper \cite{CGS12} appeared in
2012 (but whose first draft circulated since 2010), 
P. Csikv\'ari, K. Gyarmati and A. S\'ark\"ozy 
proved a few density results involving nonlinear problems over $\N$ and over finite fields. In particular, they proved that the equation $x+y=z^2$ is not PR.\footnote
{~In that paper there is also a proof of the partition regularity
of $x(y+z)=yz$, which is the same as the 
``reciprocal Schur-equation'' 
mentioned above, and a proof of the partition regularity of $xy=z^{2}$, which is a particular case of the multiplicative Rado's Theorem.}
At the foot of the paper, they left as an open problem 
the partition regularity of $x+y=tz$, which is particularly
relevant as the most basic equation that mixes additive and multiplicative
structure on $\N$.
In 2011, by using algebra in the space of ultrafilters $\beta\N$,
N. Hindman \cite{H11} solved that problem in the positive,
by showing the partition regularity of all
equations of the form $\sum_{i=1}^n x_i=\prod_{i=1}^n y_i$.
In 2014, the second named author \cite{LB14} extended
Hindman's result, and by nonstandard methods
he proved the following:
For every choice of sets $F_i\subseteq\{1,\ldots,m\}$,
the equation $\sum_{=1}^n c_i\, x_i(\prod_{j\in F_i}y_j)=0$ 
is partition regular whenever 
$\sum_{i\in J}c_j=0$ for some nonempty 
$J\subseteq\{1,\ldots,m\}$. 
(It is agreed that $\prod_{j\in\emptyset}y_j=1$.)

An important contribution in the case of quadratic equations 
has been recently given by N. Frantzikinakis and B. Host. 
As a consequence of their structural theorem for multiplicative 
functions \cite{FH16}, they proved that the  equations 
$16x^2+9 y^2=z^2$ and $x^2-xy+y^2=z^2$ are partially
partition regular in the variables $x$ and $y$.

To our knowledge, the last progress done in this 
area about is found in \cite{DNR16}, where 
M. Riggio and the first named author used nonstandard analysis
to identify a large class of Fermat-like equations that
are not partition regular, the simplest examples
being $x^m+y^n=z^k$ where $k\notin\{n,m\}$.\footnote
{~Here we do not count the constant solution 
$z=y=z=2$ of $x^n+y^n=z^{n+1}$.}

At the moment this paper was completed, it was breaking
news that M. J. H. Heule, O. Kullmann and V. W. Marek \cite{HKM16}
solved a problem posed by P. Erd\H{o}s and R. Graham in the 1970s, 
namely the \emph{Boolean Pythagorean triples problem},
that asked whether the equation 
$x^{2}+y^{2}=z^{2}$ is partition regular for 2-colorings of $\N$. 
By using a computer-assisted proof, they have been able to 
prove that any 2-coloring of 
$\{1,2,\dots,7825\}$ contains a monochromatic Pythagorean triple, 
and that $7825$ is the least number with such a 
property.\footnote{~See also the article of E. Lamb 
appeared online in the journal \emph{Nature} on May 26, 2016.
It is worth noticing that the proof (contained in a 
huge file of 200 terabytes), does not solve the full problem
of PR of the Pythagorean equation $x^{2}+y^{2}=z^{2}$,
where a finite (but \emph{arbitrary}) number of
colors is allowed in partitions.}

In this paper we consider Diophantine equations
in their full generality, aiming at finding
simple conditions on coefficients and exponents
that characterize partition regularity
and non-partition regularity. The ultimate goal
is to extend Rado's Theorem and develop a general Ramsey theory of Diophantine equations. 

The techniques that are used here are twofold.
On the side of sufficient conditions for partition regularity (Section \ref{Sufficient}),
we use the algebraic structure of the space of ultrafilters $\beta\N$,
combined with properties of difference sets of sets of positive
asymptotic density. On the side of necessary conditions (Section \ref{nonPR}),
we work in the setting of hypernatural numbers $\hN$
of nonstandard analysis, the instrumental tool 
being the relation of $u$-equivalence and its properties.
Basically, $u$-equivalence formalizes the well-known
characterization of partition regularity in terms
of ultrafilters within a nonstandard framework. 
However, whilst this technique is based on
nonstandard analysis, the used arguments are
of a purely combinatorial nature. 

\section{Preliminary definitions and results}

\subsection{Asymptotic density}

Following a common practice in number theory,
with $\N$ we denote the set of \emph{positive integers}. We recall that the \emph{upper asymptotic density}
of a set $A\subseteq\Z$ is defined as follows:
$$\overline{d}(A)=\limsup_{n\to\infty}\frac{|A\cap[-n,n]|}{2n+1}.$$

By replacing symmetric intervals $[-n,n]$ with
arbitrary intervals, one obtains the following generalization.

\begin{defn}
The \emph{Banach density} $\text{BD}(E)$ of a set $E\subseteq\Z^t$ is 
the greatest of the following superior limits of relative densities
$$\limsup_{n\to\infty}\frac{|E\cap R_n|}{|R_n|}$$
where $(R_n=\prod_{i=1}^t[a_{ni},b_{ni}])_{n\in\N}$ 
are sequences of rectangles whose size
in every direction approaches infinity, \emph{i.e.} 
$\lim_{n\to\infty}(b_{ni}-a_{ni})=+\infty$
for $i=1,\ldots,t$.
\end{defn}

It can be checked that such a greatest value is actually attained.
In the one-dimensional case, equivalently one can define
$\text{BD}(E)=\lim_{n}e_n/n=\inf e_n/n$,
where $e_n$ is the greatest cardinality of an
intersection $E\cap I$ where $I$ is an interval of length $n$.


\subsection{IP-sets}

A relevant notion in combinatorial number theory 
is that of \emph{IP-set}. 

\begin{defn}
Let $G=(g_i)_{i\in\N}$ be an increasing sequence of natural numbers. 
The \emph{IP-set generated} by $G$ is the set of finite sums
$$\text{FS}(G)=\text{FS}(g_i)_{i\in\N}=
\left\{\sum_{j=1}^k g_{i_{j}}\,\Big|\, 
i_{1}<i_{2}<\dots<i_{k}\right\}.$$
A set $A\subseteq\N$ is called \emph{IP-large} if it contains an IP-set.
\emph{Multiplicative IP-sets} and 
\emph{IP-large sets} are defined similarly: the \emph{multiplicative IP-set generated} by $G$ is
$$\text{FP}(G)=\text{FP}(g_i)_{i\in\N}=
\left\{\prod_{j=1}^k g_{i_{j}}\,\Big|\, 
i_{1}<i_{2}<\dots<i_{k}\right\},$$
and a set $A\subseteq\N$ is called \emph{multiplicatively IP-large} if it contains a multiplicative IP-set.
\end{defn}

By the celebrated \emph{Hindman's Theorem} \cite{H74},
in every finite partition of the natural numbers
$\N=C_1\cup\ldots\cup C_r$, one of the pieces
is additively IP-large; this result can be improved to
obtain the existence of a single piece that is both
additively and multiplicatively IP-large (see \S 5.3 of \cite{HS11}).

An instrumental tool for the main result in this section
is a theorem proved by V. Bergelson, H. Furstenberg and 
R. McCutcheon \cite[Theorem C]{BFMC96}, that we now recall.

Let us first fix a convenient notation.
Let $\Fin$ denote the family of all nonempty finite subsets of $\N$.
Given an increasing sequence 
$G=(g_i)_{i\in\N}$ of natural numbers,
for $\alpha\in\Fin$ denote by $n_\alpha=\sum_{i\in\alpha}g_i$.
Clearly, $n_{\alpha}+n_{\beta}=n_{\alpha\cup\beta}$ whenever 
$\alpha\cap\beta=\emptyset$, and the IP-set $\text{FS}(G)$ is obtained
as the range of the
sequence $(n_{\alpha})_{\alpha\in\Fin}$.
Conversely, if $(n_\alpha)_{\alpha\in\Fin}$ is a
sequence such that $n_{\alpha}+n_{\beta}=n_{\alpha\cup\beta}$
whenever $\alpha\cap\beta=\emptyset$,
then its range is an IP-set, namely
$\{n_\alpha\mid\alpha\in\Fin\}=\text{FS}(G)$ where
$G=(n_{\{i\}})_{i\in\N}$. So, in a precise sense,
the two notions are equivalent.

\begin{thm}[\cite{BFMC96}, Theorem C]\label{C} 
Let $E\subseteq\Z^t$ have positive Banach density, and let

\begin{itemize}
\item
$P_1,\ldots,P_t\in\Z[x_1,\ldots,x_k]$ be polynomials with 
no constant terms\,;
\item
$(n_\alpha^{(1)})_{\alpha\in\F}, \ldots, (n_\alpha^{(k)})_{\alpha\in\F}$ 
be additive IP-sets.
\end{itemize}

Then there exist $e_1,e_2\in E$ and $\alpha\in\Fin$ such that
$$e_1-e_2=\left(P_{1}(n_{\alpha}^{(1)},\dots,n_{\alpha}^{(k)}),\dots,
P_{t}(n_{\alpha}^{(1)},\dots,n_{\alpha}^{(k)})\right).$$
\end{thm}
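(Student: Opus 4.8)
The plan is to derive the statement from an ergodic multiple--recurrence theorem by means of a Furstenberg--type correspondence principle, reducing the combinatorial assertion about the difference set $E-E$ to a statement about a single measure--preserving action of $\Z^t$. Concretely, let $\sigma$ denote the shift action of $\Z^t$ on the compact space $\Omega=\{0,1\}^{\Z^t}$, $(\sigma^{v}\omega)(x)=\omega(x+v)$, put $\omega_E=\mathbf 1_E\in\Omega$, fix a subsequence of rectangles $R_n$ realizing the Banach density of $E$ (such a sequence automatically forms a F\o lner sequence for $\Z^t$), and let $\mu$ be a weak-$*$ limit point of the averages $\frac1{|R_n|}\sum_{x\in R_n}\delta_{\sigma^x\omega_E}$. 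Then $\mu$ is a $\sigma$-invariant Borel probability measure on $\Omega$, and for the clopen set $A=\{\omega:\omega(0)=1\}$ one gets $\mu(A)\ge\text{BD}(E)>0$ together with $\text{BD}\big(E\cap(E-v)\big)\ge\mu\big(A\cap\sigma^{-v}A\big)$ for every $v\in\Z^t$, since $\mathbf 1_{E\cap(E-v)}(x)=\mathbf 1_{A\cap\sigma^{-v}A}(\sigma^x\omega_E)$ and $A\cap\sigma^{-v}A$ is a $\mu$-continuity set. Hence, if for a given $v$ we can ensure $\mu\big(A\cap\sigma^{-v}A\big)>0$, then $E\cap(E-v)\neq\emptyset$, so some $e_2\in E$ has $e_1:=e_2+v\in E$, whence $e_1-e_2=v$. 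Writing each IP-set as $n_\alpha^{(i)}=\sum_{j\in\alpha}g_j^{(i)}$ and setting $\phi=(P_1,\dots,P_t)\colon\Z^k\to\Z^t$, it therefore suffices to produce $\alpha\in\Fin$ with $\mu\big(A\cap\sigma^{-v_\alpha}A\big)>0$, where $v_\alpha=\phi\big(n_\alpha^{(1)},\dots,n_\alpha^{(k)}\big)$.

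Here the hypothesis that the $P_i$ have no constant term enters decisively: it says exactly that $\phi(0)=0$, so that $v_\alpha$ degenerates to $0$ (and $\sigma^{v_\alpha}$ to the identity) when $\alpha$ is pushed to the ``high'' part of the index set --- this is what makes the recurrence nontrivial and furnishes the base of the induction below. The remaining goal is a special case of the ergodic IP polynomial recurrence theorem: in any invertible measure--preserving system $(X,\mathcal B,\mu,(T^v)_{v\in\Z^t})$ and any $A$ with $\mu(A)>0$, there is $\alpha\in\Fin$ --- in fact an IP${}^*$ family of them --- with $\mu\big(A\cap T^{-v_\alpha}A\big)>0$. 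I would prove this with the now--standard apparatus of IP-limits and PET (polynomial exhaustion technique) induction: one works inside IP-rings $\F'\subseteq\Fin$, invokes Hindman's Theorem to pass, along a sub-IP-ring, to a situation in which all relevant $L^2(\mu)$-valued limits $\text{IP-}\lim_{\alpha}T^{v_\alpha}\mathbf 1_A$ --- and those of the van der Corput ``derivatives'' $\alpha\mapsto T^{v_{\alpha\cup\beta}}\mathbf 1_A\cdot T^{v_\beta}\mathbf 1_A$ --- exist, and then runs an induction on a well-founded complexity measure of the family of polynomial maps $\{\,\mathbf g\mapsto\phi(\text{partial sums of }\mathbf g)\,\}$, each van der Corput step strictly lowering that complexity. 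The base case is linear and is handled by Hindman's Theorem (equivalently by the IP Szemer\'edi theorem of Furstenberg and Katznelson). At the end one obtains a sub-IP-ring along which $\text{IP-}\lim_{\alpha}\langle T^{v_\alpha}\mathbf 1_A,\mathbf 1_A\rangle>0$, which produces the desired $\alpha$.

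The main obstacle is precisely this last step: making the van der Corput estimates uniform over all sub-IP-ring restrictions, and organizing the PET induction so that each reduction strictly decreases the complexity while preserving the condition $\phi(0)=0$ that keeps the reduced families nondegenerate and the scheme well-founded. By comparison, the correspondence principle of the first paragraph is routine; the genuine content is the IP polynomial recurrence statement in measure-preserving systems, which is the substance of \cite{BFMC96}.
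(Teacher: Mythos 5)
The first thing to observe is that the paper does not prove this statement at all: it is quoted verbatim from Bergelson--Furstenberg--McCutcheon \cite{BFMC96} (their Theorem~C) and used as a black box throughout Section~\ref{Sufficient}, so there is no internal proof to compare yours against. Your outline does correctly reconstruct the architecture of the proof in the cited source: a Furstenberg-type correspondence principle transferring positive Banach density of $E\subseteq\Z^t$ to a set of positive measure in a measure-preserving $\Z^t$-system, followed by the IP polynomial recurrence theorem, proved via IP-limits along sub-IP-rings, an IP van der Corput lemma, and PET induction. The correspondence half of your argument is essentially complete and correct: the rectangles realizing the Banach density form a F\o lner sequence, the weak-$*$ limit measure is shift-invariant, and the inequality $\text{BD}\bigl(E\cap(E-v)\bigr)\ge\mu\bigl(A\cap\sigma^{-v}A\bigr)$ for the clopen cylinder set $A$ is standard, so positive measure of $A\cap\sigma^{-v_\alpha}A$ does yield the required pair $e_1,e_2$.

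As a standalone proof, however, the proposal has a genuine gap, and you identify it yourself: the entire analytic content --- existence of the IP-limits $\text{IP-}\lim_\alpha T^{v_\alpha}\mathbf 1_A$ after passing to a sub-IP-ring, the IP van der Corput estimate, and the verification that each van der Corput step strictly decreases a well-founded complexity of the polynomial family while preserving admissibility --- is named but not carried out, and that is precisely the substance of the theorem. Two smaller points. First, the heuristic that ``no constant term'' means $v_\alpha$ ``degenerates to $0$ when $\alpha$ is pushed to the high part of the index set'' is not accurate in the IP setting: the generators are positive integers, so $n_\alpha^{(i)}\ge 1$ for every $\alpha\in\Fin$ and $v_\alpha$ does not tend to $0$; the hypothesis $\phi(0)=0$ is rather the structural admissibility condition that keeps the PET induction well-founded and its base case genuinely recurrent. (The hypothesis is certainly necessary: for $t=1$, $P_1\equiv$ a nonzero constant $c$ and $E=(|c|+1)\Z$ give a counterexample.) Second, to run the induction one needs the conclusion in the stronger IP$^*$ form you mention, since the van der Corput step requires intersecting finitely many sub-IP-rings; stating only ``there exists $\alpha$'' at each stage would not close the induction.
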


\subsection{Algebra in the space of ultrafilters $\beta\N$}

In this paper we assume the reader to be familiar with the fundamental 
properties of the space $\bN$ of ultrafilters on $\N$
endowed with the operations of pseudo-sum $\oplus$ 
and pseudo-product $\odot$:
\begin{itemize}
\item
$A\in\U\oplus\V\Leftrightarrow\{n\mid A-n\in\V\}\in\U$,
where $A-n=\{m\in\N\mid m+n\in A\}$;
\item
$A\in\U\odot\V\Leftrightarrow\{n\mid A/n\in\V\}\in\U$,
where $A/n=\{m\in\N\mid mn\in A\}$.
\end{itemize}

In particular, we assume some knowledge of 
\emph{idempotent ultrafilters} and left and right \emph{ideals}
in the compact \emph{topological right semigroups} $(\beta\N,\oplus)$
and $(\beta\N,\odot)$.
For simplicity, we will use the adjective ``additive'' when referring to the
former, and ``multiplicative'' when referring to the latter. 
So, for instance, the ultrafilter
$\U$ is additively idempotent if $\U\oplus\U=\U$, and 
$\U$ is multiplicatively idempotent if $\U\odot\U=\U$.
We will use the following notation. 
\begin{itemize}
\item
$K(\oplus)$ is the minimal additive two sided ideal\,;
\item
$K(\odot)$ is the minimal multiplicative two sided ideal\,;
\item
$\Idem(\oplus)$ is the set of 
additively idempotent ultrafilters\,;
\item
$\Idem(\odot)$ is
the set of multiplicatively idempotent ultrafilters\,;
\item
$\MM(\oplus)=\Idem(\oplus)\cap K(\oplus)$ is
the set of minimal additive idempotents\,;
\item
$\MM(\odot)=\Idem(\odot)\cap K(\odot)$ is
the set of minimal multiplicative idempotents\,;
\item
$\BD=\{\U\in\bN\mid \forall A\in\U\ \text{BD}(A)>0\}$;
\item
$\D=\{\U\in\bN\mid \forall A\in\U\ \overline{d}(A)>0\}\subseteq\BD$.
\end{itemize}

For convenience, we itemize the known results 
about algebra in $\beta\N$ that 
we will use in this paper. A comprehensive reference 
is Hindman and Strauss' book \cite{HS11},
where all proofs can be found.\footnote
{~Precisely, property (B1) and (B3) are particular cases of
Lemma 5.11;  property (B2) is
Theorem 5.20; property (B4) is Theorem 5.20. A
proof of properties (B5) and (B6) is found 
in \S 20.1, where $\BD$ is denoted 
$\Delta^*(\N,+)$; 
and properties (B7) and (B8) are in Theorem 6.79,
where $\D$ is denoted 
$\Delta(\N,+)$. Finally,
property (B9) is Lemma 17.2
where our set $\overline{\MM(\oplus)}$ is 
denoted $\MM$.}

\begin{enumerate}
\item[(B1)]
The closure 
$\overline{\Idem(\oplus)}=
\{\U\in\bN\mid \forall A\in\U\ A\ \text{is IP-large}\}$\,;
\item[(B2)]
$\overline{\Idem(\oplus)}$ is a multiplicative left ideal\,;
\item[(B3)]
The closure 
$\overline{\Idem(\odot)}=
\{\U\in\bN\mid \forall A\in\U\ A\ \text{is multiplicative IP-large}\}$;
\item[(B4)]
$\overline{\MM(\oplus)}$ is a multiplicative left ideal\,;
\item[(B5)]
$\BD$ is a closed additive
two sided ideal\,;
\item[(B6)]
$\BD$ is a closed multiplicative left ideal\,;
\item[(B7)]
$\D$ is a closed additive
left ideal\,;
\item[(B8)]
$\D$ is a closed multiplicative left ideal\,;
\item[(B9)]
$\D\cap\overline{\MM(\oplus)}\cap\MM(\odot)\ne\emptyset$.
\end{enumerate}

Ultrafilters in $\D\cap\overline{\MM(\oplus)}\cap\MM(\odot)$
are particularly relevant.
They were first isolated and studied and by
N. Hindman and D. Strauss, who named them
\emph{combinatorially rich}
(\cite[Definition 17.1]{HS11}).

\subsection{Partition regularity of functions}

By \emph{finite coloring}
we mean a finite partition of the natural numbers.
Elements $a_1,\ldots,a_k$ are called \emph{monochromatic}
with respect to a given finite coloring $\N=C_1\cup\ldots\cup C_r$
if there exists $C_i$ such that $a_1,\ldots,a_k\in C_i$.

\begin{defn} 
{\rm A function $f(x_1,\dots,x_n)$ is called
\emph{partition regular} on $\N$ (or simply PR)
if in every finite coloring of $\N$
one finds a monochromatic root,
\emph{i.e.}
monochromatic elements 
$a_1,\ldots,a_n$ with $f(a_1,\dots,a_n)=0$.

When it is possible to find such elements $a_i$ that are
pairwise different,
the function is called \emph{injectively} PR.

More generally, $f(x_1,\ldots,x_n)$ is called partition regular with 
\emph{injectivity}
$|\{x_{i_1},\ldots,x_{i_p}\}|\ge s$ if in every finite coloring
one can always find a monochromatic
solution $a_1,\ldots,a_n$ with $|\{a_{i_1},\ldots,a_{i_p}\}|\ge s$.

A function $f(x_1,\ldots,x_n)$ is called
non-trivially PR if it is partition regular with injectivity $|\{x_1,\ldots,x_n\}|\ge 2$.}
\end{defn}

The above definitions of partition regularity
are extended to equations $f(x_1,\ldots,x_n)=g(y_1,\ldots,y_k)$
in the obvious way, by considering the corresponding notions 
for the function $f-g$.
So, for instance, the classic \emph{Schur's Theorem} \cite{S16} of 1916
can be equivalently formulated as: 
``The function $f(x,y,z)=x+y-z$ is injectively PR'', or as: 
``The equation $x+y=z$ is injectively PR''.

A fundamental result about partition regularity that
dramatically generalizes the result of Shur's mentioned above,
was proved  in 1933 by R. Rado \cite{R33},
who completely solved the linear (homogeneous and inhomogeneous) Diophantine case.

\begin{thm}[Rado] 
A linear Diophantine homogeneous equation 
$$c_{1}x_{1}+\dots+c_{n}x_{n}\ =\ 0$$
is PR on $\N$ if and only if the following
``Rado's condition'' is satisfied:
\begin{itemize}
\item
``There exists a nonempty set $J\subseteq\{1,\dots,n\}$ such that
$\sum_{j\in J} c_{j}=0$.''
\end{itemize}

Moreover, a linear Diophantine inhomogeneous equation 
$$c_{1}x_{1}+\dots+c_{n}x_{n}=d$$
is PR on $\mathbb{N}$ if and only if either 
\begin{itemize}
	\item there exists a natural number $k$ such that $\sum_{i=1}^{n} c_{i} k=d$ or
	\item there exists an integer $z$ such that $\sum_{i=1}^{n}c_{i} z=d$ and there exists a nonempty subset $J\subseteq\{1,...,n\}$ such that $\sum_{j\in J}c_{j}=0$.
\end{itemize}
\end{thm}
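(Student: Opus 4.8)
The plan is to prove the four implications separately: necessity and sufficiency of Rado's condition in the homogeneous case, and then the two directions of the inhomogeneous case, the latter reducing to the former via a translation together with the trivial remark that a constant tuple is monochromatic for every coloring. Throughout I would use the standard convention that all $c_i\ne 0$, since a zero coefficient merely deletes a variable.

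\textbf{Homogeneous, necessity.} Arguing by contraposition, I would assume $\sum_{j\in J}c_j\ne 0$ for every nonempty $J\subseteq\{1,\dots,n\}$ and exhibit an explicit bad coloring. Only finitely many primes divide one of the (finitely many, all nonzero) integers $\sum_{j\in J}c_j$, so one may fix a prime $p$ dividing none of them. Color $m\in\N$ by the last nonzero digit of its base-$p$ expansion: writing $m=p^{v}u$ with $p\nmid u$, put $\chi(m)=(u\bmod p)\in\{1,\dots,p-1\}$. If $a_1,\dots,a_n$ were monochromatic of color $\gamma$ with $\sum_i c_ia_i=0$, write $a_i=p^{k_i}u_i$, set $k=\min_i k_i$ and $J=\{i:k_i=k\}$; dividing the equation by $p^{k}$ and reducing modulo $p$ leaves $\gamma\sum_{j\in J}c_j\equiv 0\pmod p$, which contradicts $p\nmid\gamma$ and $p\nmid\sum_{j\in J}c_j$. (Since Rado's condition fails we have $\sum_i c_i\ne 0$, so no constant tuple is a solution and none need be treated separately.)

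\textbf{Homogeneous, sufficiency.} Suppose $\sum_{j\in J}c_j=0$ for some nonempty $J$. If $\sum_i c_i=0$ one may take $J=\{1,\dots,n\}$, and then every constant tuple solves the equation; so assume $\sum_i c_i\ne 0$, with $J$ a proper subset. I would argue through the algebra of $\beta\N$, in the spirit of the paper's later sections: fix a minimal additive idempotent $\U\in\MM(\oplus)$, so that every $A\in\U$ is a central set, and invoke the (by now classical) fact that a linear system satisfying the columns condition --- in particular our equation --- has a solution with all entries inside any prescribed central set; since every finite coloring has a color class in $\U$, this yields a monochromatic solution. I expect this to be the main obstacle of the whole argument: that structural fact is itself substantial, and unwinding it is the classical route through van der Waerden's theorem, the existence of Deuber's $(m,p,c)$-sets in every finite coloring, and the verification that the columns condition forces a solution into a suitable such set --- the delicate point being that when $\sum_i c_i\ne 0$ the contributions of the variables outside $J$ must be absorbed, which plain arithmetic progressions or finite-sums sets do not permit but $(m,p,c)$-sets do.

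\textbf{Inhomogeneous case.} Put $g=\sum_i c_i$. If $gk=d$ for some $k\in\N$, the constant tuple $x_1=\dots=x_n=k$ is a monochromatic solution in every coloring, so the equation is PR. If instead $gz=d$ for some $z\in\Z$ and Rado's condition holds for $(c_i)$, I would substitute $x_i=z+y_i$: applying homogeneous sufficiency to the given coloring composed with $m\mapsto z+m$, and taking the homogeneous solution with all $y_i$ so large that $z+y_i\in\N$ (possible because cofinite sets lie in $\U$), gives a monochromatic solution of the inhomogeneous equation. Conversely, if neither alternative holds, then either $g\nmid d$ --- which includes the case $g=0\ne d$ --- and coloring by residues modulo $|g|$ (or modulo any $N$ with $N\nmid d$ when $g=0$) has no monochromatic solution, since all $x_i$ in one residue class force $\sum_i c_ix_i$ into the residue class $0$, which omits $d$; or $g\mid d$, so $z=d/g\in\Z$, and then $z\le 0$ (otherwise the first alternative would hold) while Rado's condition fails for $(c_i)$, so translating by $-z$ the bad coloring from the homogeneous necessity argument yields a coloring with no monochromatic solution.
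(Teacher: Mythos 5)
The paper does not prove this theorem: it is stated as a classical result with pointers to \cite{R33} and \cite{GRS90}, so there is no internal proof to compare yours against; I can only judge the proposal on its own terms. Its architecture is the standard one, and three of the four implications are complete and correct: the base-$p$ ``last nonzero digit'' coloring for homogeneous necessity is exactly the classical argument, and the inhomogeneous case is correctly reduced to the homogeneous one via the translation $x_i=z+y_i$ together with the mod-$|g|$ coloring. (Your parenthetical appeal to ``cofinite sets lie in $\U$'' to force the $y_i$ large presupposes a nonprincipal witness, which you have not established at that point; it is cleaner to use the dilation trick -- pull the coloring back along $y\mapsto Ky$ and scale the homogeneous solution by $K$ -- which needs only homogeneity.)

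The genuine gap is homogeneous sufficiency, which is the substantive half of the theorem. You reduce it to the statement that every central set contains solutions of any system satisfying the columns condition, explicitly flag this as ``the main obstacle'', and do not prove it. That statement (Deuber--Furstenberg) is strictly stronger than Rado's theorem itself, so as written the proposal assumes more than it proves. For a single equation the full $(m,p,c)$-set machinery is not needed. With $J$ proper, $s=\sum_{i\notin J}c_i$ and $g=\gcd(c_j:j\in J)$, seek a solution of the form $x_i=a+\lambda_i d$ for $i\in J$ and $x_i=gd$ for $i\notin J$: since $\sum_{j\in J}c_j=0$ kills the $a$-term, the equation becomes $d\bigl(\sum_{j\in J}c_j\lambda_j+gs\bigr)=0$, and bounded integers $\lambda_j$ with $\sum_{j\in J}c_j\lambda_j=-gs$ exist because $g$ divides $gs$. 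What is then required of the coloring is a monochromatic configuration $\{gd\}\cup\{a+\lambda d:|\lambda|\le k\}$, which is Brauer's strengthening of van der Waerden's theorem and is proved by one further induction on the number of colors. Supplying that argument (or at least the precise statement of Brauer's theorem) is what is missing to make the sufficiency direction self-contained.
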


Notice that one cannot have \emph{injective} PR when
the number of variables $n=2$ because, in this case, 
Rado's condition implies that $c_1=-c_2$, and 
the equation reduces to the trivial equality $x_1=x_2$. 
On the other hand, as recently shown by N. Hindman and 
I. Leader in a more general setting, 
the following holds:

\begin{thm}[\cite{HL06}, Theorem 3.1]\label{injectiveRado}
A linear Diophantine equation in more than two variables
is PR on $\N$ if and only if it is injectively PR on $\N$.
\end{thm}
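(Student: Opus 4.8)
The implication ``injectively PR $\Rightarrow$ PR'' is immediate, so the content is the converse, which I would prove as follows. Let $\sum_{i=1}^{n}c_{i}x_{i}=0$ be PR with $n\ge 3$. First I would normalize: since the equation genuinely involves more than two variables one may assume every $c_{i}\ne 0$, and (dividing through) that $\gcd(c_{1},\dots,c_{n})=1$; as the equation has solutions in $\N$, the $c_{i}$ are not all of one sign, and Rado's Theorem gives a nonempty $J\subseteq\{1,\dots,n\}$ with $\sum_{j\in J}c_{j}=0$. Write $S=\sum_{i=1}^{n}c_{i}$ and $L=\{\mathbf v\in\Z^{n}\mid \sum_{i}c_{i}v_{i}=0\}$, a lattice of rank $n-1\ge 2$.

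The key lemma I would isolate is a genericity statement: \emph{$L$, and indeed every coset of $L$, contains a vector with pairwise distinct coordinates.} This is where the hypotheses $n\ge 3$ and $c_{i}\ne 0$ are used: if $L\subseteq\{v_{i}=v_{j}\}$ for some $i\ne j$, then $e_{i}-e_{j}$ is orthogonal to $L$, hence a scalar multiple of $(c_{1},\dots,c_{n})$, forcing the coefficient vector to be supported on $\{i,j\}$ --- impossible for $n\ge 3$ with all $c_{i}\ne 0$. So $L\otimes\Q$, of dimension $\ge 2$, is contained in none of the finitely many diagonal hyperplanes, and over an infinite field a vector space is never a finite union of proper subspaces; the same argument applies to a coset.

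Next I would split on $S$. When $S=0$ the argument is elementary: pick $\mathbf v\in L$ with distinct coordinates and, by adding a large multiple of the all-ones vector (which lies in $L$ exactly because $S=0$), arrange $0\le v_{1}<\dots<v_{n}$. In any finite coloring, van der Waerden's Theorem gives $w,u\in\N$ making $w,w+u,\dots,w+v_{n}u$ monochromatic; then $x_{i}:=w+v_{i}u$ are monochromatic, pairwise distinct (since $x_{i}-x_{j}=(v_{i}-v_{j})u$ with $u\ge 1$), and $\sum_{i}c_{i}x_{i}=u\sum_{i}c_{i}v_{i}+wS=0$. So the equation is injectively PR in this case.

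When $S\ne 0$ this device collapses --- ``$x_{i}=v_{i}u+w$'' now yields $\sum_{i}c_{i}x_{i}=wS\ne 0$ rather than a solution --- and I would instead invoke the full sufficiency half of Rado's Theorem, whose proof produces in every finite coloring monochromatic structures far richer than arithmetic progressions: monochromatic homothetic copies of arbitrarily large Deuber $(m,p,c)$-sets, equivalently a solution of the equation inside every central set. Within such a monochromatic set the solutions of the equation form a rich family parametrized through $L$, on which the $\binom{n}{2}$ degeneracy conditions $x_{i}=x_{j}$ cut out only proper constraints (by the genericity lemma); choosing a solution off this finite union of proper constraints, and if necessary large enough for all coordinates to stay positive, gives a monochromatic solution with pairwise distinct entries. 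The hard part is exactly here: since one has no control over \emph{which} monochromatic structure Rado's proof supplies, the genericity selection must be carried out against a ``generic'' Deuber set (or central set) while simultaneously keeping the solution in $\N$ --- it is this bookkeeping, not any new idea, that makes the general statement a result of Hindman and Leader rather than an immediate corollary of Rado's Theorem.
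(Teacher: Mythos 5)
The paper does not actually prove this statement: it imports it verbatim from Hindman and Leader \cite{HL06}, so there is no internal proof to compare yours against, and your attempt has to stand on its own. Judged that way, it is correct only in part. The trivial direction is fine; the genericity lemma is correct (if $L\subseteq\{v_i=v_j\}$ then $e_i-e_j$ lies in $L^{\perp}=\Q\cdot(c_1,\dots,c_n)$, impossible for $n\ge 3$ with all $c_i\ne 0$, and a $\Q$-vector space of dimension $\ge 2$ is not a finite union of proper subspaces); and the case $S=\sum_i c_i=0$ is genuinely proved, since $\mathbf{1}\in L$ lets you translate a distinct-coordinate vector of $L$ into $\{0,\dots,M\}^n$ and read off an injective solution from a monochromatic progression of length $M+1$ given by van der Waerden.

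The case $S\ne 0$ is a genuine gap, and you essentially concede as much. Your genericity lemma is a statement about the full lattice $L$; what is needed is that the set of solutions lying inside the \emph{particular} monochromatic structure (central set, Deuber $(m,p,c)$-set) that Rado's sufficiency proof supplies is not contained in the union of the $\binom{n}{2}$ diagonals. Nothing in the sketch exhibits that solution set as ``parametrized through $L$'' in a way that transfers genericity: a priori, all solutions inside a given colour class could be degenerate, and saying that the degeneracy conditions ``cut out only proper constraints'' begs exactly the question at issue. Closing this requires an actual construction --- e.g.\ showing that the family of solutions produced inside a central set retains enough free parameters to avoid each hyperplane $x_i=x_j$, or augmenting the system so as to force distinctness while preserving kernel partition regularity --- and this is the content of \cite{HL06}, not bookkeeping. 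Since equations with $S\ne 0$ (such as $x-y+3z=0$, where only the subset $J=\{1,2\}$ sums to zero) are perfectly typical PR equations, the unproved case is the main body of the theorem rather than a residual one.
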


\emph{E.g.}, for $n\geq 2$ the following equations are injectively PR:
$$x_1=x_2+a_1y_1+\ldots+a_ny_n.$$

It is known that partition regularity can be equivalently
expressed in terms of ultrafilters.
(For a precise formulation of this equivalence,
see \cite{HS11}, Theorem 5.7.)
Here we specify this equivalence when restricting to the partition regularity of functions with injectivity conditions.

\begin{defn}
{\rm An ultrafilter $\U$ on $\N$ is called a PR-\emph{witness}
of the function $f(x_1,\ldots,x_n)$
with injectivity $|\{x_{i_1},\ldots,x_{i_p}\}|\ge s$
if for every $A\in\U$ there exist $a_{1},\dots,a_{n}\in A$
such that $f(a_1,\ldots,a_n)=0$ 
and $|\{a_{i_1},\ldots,a_{i_p}\}|\ge s$.}
\end{defn}

\begin{prop}\label{Ultrafilters Characterization} 
A function $f(x_{1},\dots,x_{n})$ is partition regular
with injectivity $|\{x_{i_1},\ldots,x_{i_p}\}|\ge s$
if and only if there exists a PR-witness of $f(x_1,\ldots,x_n)$
with injectivity $|\{x_{i_1},\ldots,x_{i_p}\}|\ge s$.
\end{prop}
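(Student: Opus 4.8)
The plan is to derive this from the standard dictionary between partition regular families of subsets of $\N$ and ultrafilters, in the form recorded in Theorem~5.7 of~\cite{HS11}, while simply carrying the injectivity clause $|\{x_{i_1},\ldots,x_{i_p}\}|\ge s$ along unchanged at every step.

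For the direction $(\Leftarrow)$ the argument is immediate. If $\U$ is a PR-witness of $f$ with injectivity $|\{x_{i_1},\ldots,x_{i_p}\}|\ge s$ and $\N=C_1\cup\ldots\cup C_r$ is a finite coloring, then exactly one piece $C_i$ lies in $\U$; applying the definition of PR-witness with $A=C_i$ produces $a_1,\ldots,a_n\in C_i$ with $f(a_1,\ldots,a_n)=0$ and $|\{a_{i_1},\ldots,a_{i_p}\}|\ge s$, and these are monochromatic.

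For $(\Rightarrow)$ I would introduce the family $\mathcal{P}$ of all $A\subseteq\N$ such that every finite coloring of $A$ admits monochromatic $a_1,\ldots,a_n\in A$ with $f(a_1,\ldots,a_n)=0$ and $|\{a_{i_1},\ldots,a_{i_p}\}|\ge s$; the hypothesis says precisely $\N\in\mathcal{P}$. First I would note that $\mathcal{P}$ is closed under supersets (restrict a coloring of $B\supseteq A$ to $A$). The substantive point is the partition property: if $A\in\mathcal{P}$ and $A=A_1\cup\ldots\cup A_r$, then some $A_j\in\mathcal{P}$. To prove it, assume each $A_j\notin\mathcal{P}$, fix for each $j$ a finite coloring of $A_j$ with no monochromatic ``good'' solution, pass to the disjointified pieces $A_j'=A_j\setminus\bigcup_{i<j}A_i$, and take the common refinement; this is a finite coloring of $A$ each of whose classes sits inside a single class of one of the chosen colorings, hence has no monochromatic good solution, contradicting $A\in\mathcal{P}$.

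Finally I would extract the ultrafilter. A family that contains $\N$, is upward closed, and has the partition property contains an ultrafilter: apply Zorn's Lemma to filters contained in $\mathcal{P}$, using upward closure together with the partition property to check that any such filter $\F$ extends, for every $B\subseteq\N$, to a filter still inside $\mathcal{P}$ containing $B$ or $\N\setminus B$ (otherwise there would be $F_1,F_2\in\F$ with $F_1\cap B\notin\mathcal{P}$ and $F_2\cap(\N\setminus B)\notin\mathcal{P}$, forcing both halves of the partition $F_1\cap F_2=(F_1\cap F_2\cap B)\cup(F_1\cap F_2\setminus B)$ outside $\mathcal{P}$, contradicting $F_1\cap F_2\in\mathcal{P}$). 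A maximal filter inside $\mathcal{P}$ is then an ultrafilter $\U\subseteq\mathcal{P}$, and applying the defining property of $\mathcal{P}$ to the trivial one-color partition shows that every $A\in\U$ contains a good solution, so $\U$ is the desired PR-witness; alternatively, once $\mathcal{P}$ is shown to be a partition regular family one may simply invoke Theorem~5.7 of~\cite{HS11}. I expect the only place requiring genuine care to be the verification of the partition property of $\mathcal{P}$, together with the routine observation that the injectivity condition is transported intact through the whole construction.
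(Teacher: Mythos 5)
Your proof is correct, but it takes a genuinely longer route than the paper's. The paper handles the forward direction with a single compactness trick: it introduces the family $\F$ of all sets $A$ whose complement contains no good solution (no tuple $a_1,\ldots,a_n$ with $f(a_1,\ldots,a_n)=0$ and $|\{a_{i_1},\ldots,a_{i_p}\}|\ge s$), observes that $\F$ has the finite intersection property --- since $A_1,\ldots,A_m\in\F$ with empty intersection would make $\N=A_1^c\cup\ldots\cup A_m^c$ a finite coloring with no monochromatic good solution --- and extends $\F$ to an ultrafilter $\U$; any $B\in\U$ lacking a good solution would give $B^c\in\F\subseteq\U$ and hence $\emptyset=B\cap B^c\in\U$, a contradiction. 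You instead pass through the full partition-regular-family machinery: defining the hereditary family $\mathcal{P}$, proving its partition property by the common-refinement argument, and running Zorn's Lemma over filters contained in $\mathcal{P}$. All of your steps are sound (the disjointification and refinement argument, the maximality argument showing a maximal filter in $\mathcal{P}$ is an ultrafilter, and the observation that the injectivity clause rides along intact are each correct), and your route has the merit of essentially reproving the general dictionary of Theorem~5.7 of \cite{HS11} that the paper only cites; the paper's complement-family argument reaches the same conclusion from the single standard fact that a family with the finite intersection property extends to an ultrafilter, with no need to verify a partition property at all.
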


\begin{proof}
One direction is trivial because in every finite coloring, 
one and only one of the colors must belong to $\U$, 
by the property of ultrafilter. 
Conversely, notice that the following family
$$\F=\{A\subseteq\N\mid \forall a_1,\ldots,a_n\in A^c\ 
|\{a_{i_1},\ldots,a_{i_p}\}|\ge s\Rightarrow f(a_1,\ldots,a_n)\ne 0\}$$
has the finite intersection property. Indeed, if by
contradiction $A_1,\ldots,A_n\in\F$ are
such that $\bigcap_{i=1}^n A_i=\emptyset$, then
the finite coloring $\N=A_1^c\cup\ldots\cup A_n^c$
would provide a counter-example to the PR
of $f(x_1,\ldots,x_n)$ with injectivity 
$|\{x_{i_1},\ldots,x_{i_p}\}|\ge s$.
Finally, notice that any ultrafilter $\U\supseteq\F$ is 
the desired PR-witness; indeed, if $B\in\U$ was a counter-example,
then its complement $B^c\in\F\subseteq \U$, and hence 
$\emptyset=B\cap B^c\in\U$, a contradiction.
\end{proof}

\section{Sufficient conditions for PR}\label{Sufficient}

Let us first prove a useful property about ultrafilters
that simultaneously witness several equations.

\begin{lem}\label{lemmasystem}
Assume that for every $i=1,\ldots,k$, the ultrafilter $\U$ is a 
PR-witness of $f_i(x_{i,1},\ldots,x_{i,n_i})=0$
with injectivity $|\{x_{i,j_1},\ldots,x_{i,j_{p_i}}\}|\ge s_i$. 
If the functions $f_i$ have 
pairwise disjoint sets of variables\footnote
{~That is, 
$\{x_{i,1},\ldots,x_{i,k_i}\}\cap\{x_{j,1},\ldots,x_{j,k_j}\}=\emptyset$
for $j\ne i$.} then $\U$ is also a PR-witness of the following system
of equations:
$$\begin{cases}
f_i(x_{i,1},\ldots,x_{i,n_i})=0, & i=1,\ldots,k;
\\
x_{1,1}=\ldots=x_{k,1},
\end{cases}$$
with injectivity 
$|\{x_{i,j_1},\ldots,x_{i,j_{p_i}}\}|\ge s_i$ for $i=1,\ldots,k$.
\end{lem}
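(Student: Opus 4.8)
The plan is to build, from the given single PR-witness $\U$, a solution of the whole system inside an arbitrary member $A\in\U$ by successively thinning $A$ down. The key observation is that the condition ``$x_{1,1}=\dots=x_{k,1}$'' forces the common value of the first variables of all the equations to lie in a single set that must itself belong to $\U$; so I want to produce, for each $i$, a large set of admissible values for $x_{i,1}$ and then intersect these sets.

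First I would fix $A\in\U$ and, for each $i=1,\dots,k$, set
$$B_i=\left\{a\in A\ \Big|\ \exists\, a_{i,2},\dots,a_{i,n_i}\in A\ \text{with}\ f_i(a,a_{i,2},\dots,a_{i,n_i})=0\ \text{and}\ |\{a_{i,j_1},\dots,a_{i,j_{p_i}}\}|\ge s_i\right\}.$$
Here I am relabelling so that $x_{i,1}$ is literally the first argument of $f_i$; since $\U$ is a PR-witness of $f_i=0$ with the stated injectivity, applying the witnessing property to the set $A$ yields elements $a_{i,1},\dots,a_{i,n_i}\in A$ with $f_i=0$ and the injectivity bound, so in particular $B_i\neq\emptyset$. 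The crucial point, which I would like to be able to assert, is that $B_i\in\U$; this follows by applying the PR-witness property not to $A$ but to the set $A\setminus B_i$ and deriving a contradiction if $A\setminus B_i\in\U$, exactly in the style of the proof of Proposition~\ref{Ultrafilters Characterization} — any witnessing tuple for $f_i$ inside $A\setminus B_i\subseteq A$ would, by definition of $B_i$, force its first coordinate into $B_i$, contradicting that the coordinate lies in $A\setminus B_i$. Hence $A\setminus B_i\notin\U$, i.e. $B_i\in\U$.

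Next, since $\U$ is an ultrafilter and each $B_i\in\U$, the intersection $B:=\bigcap_{i=1}^k B_i\in\U$, so $B\neq\emptyset$; pick any $a\in B$. For each $i$, because $a\in B_i$, choose witnesses $a_{i,2},\dots,a_{i,n_i}\in A$ with $f_i(a,a_{i,2},\dots,a_{i,n_i})=0$ and $|\{a_{i,j_1},\dots,a_{i,j_{p_i}}\}|\ge s_i$. Now assign $x_{i,1}:=a$ for all $i$ and $x_{i,\ell}:=a_{i,\ell}$ for $\ell\ge 2$; this is a legitimate assignment precisely because the variable sets of the $f_i$ are pairwise disjoint, so there is no clash among the variables $x_{i,\ell}$ with $\ell\ge 2$, and the only shared requirement is $x_{1,1}=\dots=x_{k,1}$, which is satisfied by construction. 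All chosen values lie in $A$, each equation $f_i=0$ holds, the equations $x_{1,1}=\dots=x_{k,1}$ hold, and each injectivity condition $|\{x_{i,j_1},\dots,x_{i,j_{p_i}}\}|\ge s_i$ holds. Since $A\in\U$ was arbitrary, $\U$ is a PR-witness of the system with the asserted injectivity, completing the proof.

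I do not expect any serious obstacle here; the one point deserving care is the verification that $B_i\in\U$, and in particular making sure the injectivity conditions $|\{x_{i,j_1},\dots,x_{i,j_{p_i}}\}|\ge s_i$ are subsets of the variables used by $f_i$ only (so they survive the relabelling and the restriction of the search to $A\setminus B_i$), together with checking that the definition of $B_i$ correctly quantifies only over the non-first variables. A minor bookkeeping subtlety is that $x_{i,1}$ need not be among the variables appearing in the injectivity set $\{x_{i,j_1},\dots,x_{i,j_{p_i}}\}$; but this causes no difficulty, since fixing $x_{i,1}=a$ only removes one degree of freedom and the witnessing property of $\U$ applied inside $A\setminus B_i$ already supplies a full tuple meeting the injectivity bound whose first coordinate is then forced into $B_i$.
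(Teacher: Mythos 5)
Your proposal is correct and follows essentially the same route as the paper: you define the same sets (your $B_i$ are the paper's $\Lambda_i$), prove they lie in $\U$ by the same contradiction argument applied to $A\setminus B_i$, and then pick a common element of their intersection to serve as the shared value $x_{1,1}=\dots=x_{k,1}$. The only difference is that you spell out the contradiction step in slightly more detail than the paper does.
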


\begin{proof}
Let $A\in\U$ be fixed. For every $i=1,\ldots,k$ let
$$\begin{aligned}
\Lambda_i\ =\ \{\,a\in A\mid \exists\, a_{i,2},\dots,a_{i,n_i}\in A \ \text{s.t.} \ 
|\{a_{i,j_1},\ldots,a_{i,j_{p_i}}\}|\ge s_i 
\\ 
\&\ f_i(a,a_{i,2},\ldots,a_{i,n_i})=0\,\}.
\end{aligned}$$
Notice that $\Lambda_i\in\U$, as otherwise
$$\begin{aligned}
\Lambda_i^c\cap A\ =\ \{\,a\in A\mid 
\forall\, a_{i,2},\dots,a_{i,n_i}\in A\ 
\ |\{a_{i,j_1},\dots,a_{i,j_{p_i}}\}|\ge s_i\Rightarrow 
\\
f_i(a,a_{i,2},\ldots,a_{i,n_i})\ne 0\,\}
\end{aligned}$$
would belong to $\U$, 
contradicting the hypothesis that $\U$ is a witness of $f_i$
with injectivity $|\{x_{i,j_1},\ldots,x_{i,j_{p_i}}\}|\ge s_i$.
Then the intersection $\Lambda=\bigcap_{i=1}^n\Lambda_i\in\U$
is nonempty and we can pick an element $a\in\Lambda\subseteq A$.
It directly follows from the definitions that there are
elements $a_{i,2},\ldots,a_{i,n_i}\in A$ with 
$|\{a_{i,j_1},\ldots,a_{i,j_{p_i}}\}|\ge s_i$
and such that $f_i(a,a_{i,2},\ldots,a_{i,n_i})=0$
for $i=1,\ldots,k$. This shows the existence of
solutions in $A$ to the considered system, with the claimed injectivity properties.
\end{proof}

Recall that a function $f(x_1,\ldots,x_n)$ is 
\emph{homogeneous}, if there exists
$\ell$ such that for every $\lambda,x_{1},\dots,x_{n}$ one has 
$f(\lambda x_1,\ldots,\lambda x_n)=\lambda^\ell f(x_1,\ldots,x_n)$.
In this case $\ell$ is called the degree of homogeneity of $f$.

The following ultrafilter property was first proved 
by the second-named author by
nonstandard analysis;
the proof given below uses an 
essentially equivalent ultrafilter argument.

\begin{thm}[\cite{LB14}]\label{alpha} 
Assume that the equation 
$f(x_{1},\dots,x_{n})$ is PR 
with injectivity $|\{x_{i_1},\ldots,x_{i_p}\}|\ge s$.
If $f$ is homogeneous then the set of witnesses
$$\mathfrak{W}_f= 
\{\U\in\bN\mid \U \ \text{is a witness of}\ f\ 
\text{with injectivity}\ 
|\{x_{i_1},\ldots,x_{i_p}\}|\ge s \}$$
is a closed multiplicative two sided ideal.
\end{thm}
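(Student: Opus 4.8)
The plan is to prove three things about $\mathfrak{W}_f$: that it is nonempty, that it is closed, and that it is a two-sided multiplicative ideal. Nonemptiness is immediate from Proposition \ref{Ultrafilters Characterization}, since $f$ is assumed PR with the stated injectivity. Closedness is also essentially routine: for each finite tuple of ``bad'' solutions one obtains a clopen set, and $\mathfrak{W}_f$ is an intersection of complements of such sets; more directly, $\mathfrak{W}_f = \{\U : \forall A \in \U,\ A \text{ contains a solution with the injectivity property}\}$, and a set of this form ``$\forall A \in \U,\ A$ has property $Q$'' is always closed in $\bN$ when $Q$ is upward-closed, because its complement is $\bigcup_{A : A \text{ fails } Q}\overline{A}$, a union of basic open sets. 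So the real content is the ideal property.

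For the ideal property I would argue as follows. Fix $\U \in \mathfrak{W}_f$ and an arbitrary $\V \in \bN$; I must show $\U \odot \V \in \mathfrak{W}_f$ and $\V \odot \U \in \mathfrak{W}_f$. Here the homogeneity of $f$, with degree of homogeneity $\ell$, is the crucial hypothesis: if $a_1,\ldots,a_n$ solve $f=0$, then so do $\lambda a_1,\ldots,\lambda a_n$ for any $\lambda \in \N$, and scaling by $\lambda$ preserves the injectivity pattern $|\{a_{i_1},\ldots,a_{i_p}\}| \ge s$ (multiplication by a fixed $\lambda$ is injective on $\N$). For $\U \odot \V$: let $A \in \U \odot \V$, so $B := \{n : A/n \in \V\} \in \U$. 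Since $\U$ is a witness of $f$, pick a solution $a_1,\ldots,a_n \in B$ with the injectivity property. Then $A/a_i \in \V$ for each $i$; intersecting these finitely many sets and... — actually the cleanest route is: it suffices to find a single $\lambda$ and elements $b_i$ so that $\lambda b_i \in A$ for all $i$ and the $b_i$ already solve $f$ with the injectivity pattern, then use homogeneity. So instead I take $\lambda \in B$ arbitrary first; better: since $B \in \U$ and $\U$ is a witness, fix solution $a_1,\ldots,a_n\in B$; then $C := \bigcap_i A/a_i \in \V$ is nonempty, so pick $c \in C$; then $a_i c \in A$ for all $i$, and $(a_1 c,\ldots,a_n c)$ solves $f=0$ by homogeneity and retains the injectivity pattern since multiplication by the fixed $c$ is injective. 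Hence $A$ contains the required solution, and $\U \odot \V \in \mathfrak{W}_f$.

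For $\V \odot \U$ the argument is dual. Let $A \in \V \odot \U$, so $B := \{n : A/n \in \U\} \in \V$, hence $B \ne \emptyset$; pick $n \in B$, so $A/n \in \U$. Since $\U$ is a witness of $f$, find a solution $a_1,\ldots,a_n \in A/n$ with the injectivity pattern; then $n a_1,\ldots,n a_n \in A$, and $(n a_1,\ldots,n a_n)$ solves $f=0$ by homogeneity while keeping the injectivity pattern (multiplication by the fixed $n$ is injective). So $A$ contains the desired solution and $\V \odot \U \in \mathfrak{W}_f$; combined with closedness and nonemptiness, $\mathfrak{W}_f$ is a closed two-sided multiplicative ideal. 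I expect the main subtlety (rather than obstacle) to be bookkeeping the injectivity condition correctly through the scaling: one must be careful that the specific variables $x_{i_1},\ldots,x_{i_p}$ whose images are required to be distinct have their distinctness preserved, which is exactly why the scalar must be a single fixed nonzero natural number — multiplication by it is injective on $\N$ — and why homogeneity (uniform scaling of \emph{all} variables by the \emph{same} $\lambda$) is precisely the hypothesis that makes this work.
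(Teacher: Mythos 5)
Your proof is correct, and its core — scaling a solution found in $\widehat{A}=\{m : A/m\in\V\}$ by a single common factor and invoking homogeneity to keep it a root while preserving the injectivity pattern — is exactly the paper's argument for showing $\U\odot\V\in\mathfrak{W}_f$, i.e.\ the right-ideal half. Where you diverge is the left-ideal half: the paper does not prove $\V\odot\U\in\mathfrak{W}_f$ directly, but instead establishes closedness and then cites the general fact that every closed right ideal of $(\beta\N,\odot)$ is a left ideal (\cite[Theorem 2.19]{HS11}). Your direct argument — pick any $m$ with $A/m\in\U$, find a good root in $A/m$, and scale by $m$ — is perfectly valid, is arguably more elementary and self-contained, and as a bonus shows that the two-sided ideal property does not actually depend on closedness (which in your write-up is only needed to conclude that $\mathfrak{W}_f$ is a \emph{closed} ideal, as the statement requires). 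Two cosmetic remarks: you reuse the letter $n$ both for the number of variables and for the element of $B$ in the left-ideal step, which should be renamed; and your intersection $\bigcap_i A/a_i$ over \emph{all} indices is the right thing to take (the paper's displayed intersection over only the $i_j$'s appears to be a slip, since one needs all $\lambda b_i$ to land in the given set).
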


\begin{proof} 
Let $\U\in\mathfrak{W}_f$ and $\V\in\bN$. 
By definition, $B\in\U\odot\V$ if and only if
$\widehat{B}=\{m\in\N\mid B/m\in\V\}\in\U$.
Now let $b_{1},\ldots,b_{n}\in\widehat{B}$ be
such that $f(b_1,\ldots,b_n)=0$ and $|\{b_{i_1},\ldots,b_{i_p}\}|\ge s$,
pick any $\lambda\in\bigcap_{j=1}^kB/b_{i_j}\in\V$,
and consider the elements $\lambda b_1,\ldots,\lambda b_n\in B$.
By homogeneity, 
$f(\lambda b_1,\ldots,\lambda b_n)=\lambda^\ell f(b_1,\ldots,b_n)=0$;
moreover, $|\{\lambda b_{i_1},\ldots,\lambda b_{i_p}\}|=
|\{b_{i_1},\ldots,b_{i_p}\}|\ge s$.
This shows that $\U\odot\V\in\mathfrak{W}_f$,
and hence we can conclude that $\mathfrak{W}_f$
is a multiplicative right ideal. 
Moreover, it is verified in a straightforward manner
that $\mathfrak{W}_f$ is (topologically) closed in $\beta\N$.
Finally, recall that every closed right ideal is also a left ideal
(see \cite[Theorem 2.19]{HS11}).
\end{proof}

The intersection of all closed two sided ideals 
equals the closure of the minimal ideal, and so:

\begin{cor}\label{ultrahomogeneous}
Let $f(x_1,\ldots,x_n)$ be a homogeneous function
that is PR with injectivity $|\{x_{i_1},\ldots,x_{i_p}\}|\ge s$.
Then every $\U\in\overline{K(\odot)}$ is a witness 
of $f$ with injectivity $|\{x_{i_1},\ldots,x_{i_p}\}|\ge s$. 
\end{cor}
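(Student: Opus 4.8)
The plan is to deduce Corollary \ref{ultrahomogeneous} directly from Theorem \ref{alpha} together with the standard structure theory of compact right topological semigroups. By Theorem \ref{alpha}, the witness set $\mathfrak{W}_f$ is a closed two-sided ideal of $(\bN,\odot)$, and in particular it is nonempty since $f$ is assumed PR (with the stated injectivity), so by Proposition \ref{Ultrafilters Characterization} at least one witness exists. So it suffices to show that every closed two-sided ideal of a compact right topological semigroup $S$ contains $\overline{K(S)}$, the closure of the smallest ideal; applied to $S=(\bN,\odot)$ this gives $\overline{K(\odot)}\subseteq\mathfrak{W}_f$, which is exactly the claim.

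For the semigroup fact itself: let $I\subseteq S$ be any (two-sided) ideal. Then $K(S)$, being the \emph{minimal} two-sided ideal, is contained in $I$ by minimality — indeed $K(S)\cap I$ is a nonempty (it contains $K(S)\odot I$) two-sided ideal, hence equals $K(S)$ by minimality of $K(S)$, so $K(S)\subseteq I$. If moreover $I$ is closed, then taking closures yields $\overline{K(S)}\subseteq\overline{I}=I$. I would phrase this cleanly as: the intersection of all closed two-sided ideals of $S$ equals $\overline{K(S)}$ — one inclusion because $\overline{K(S)}$ is itself a closed two-sided ideal (the closure of an ideal in a compact right topological semigroup is again an ideal, see \cite[Theorem 2.19]{HS11} and the surrounding discussion), and the other because every closed two-sided ideal contains $K(S)$ by the minimality argument above, hence contains $\overline{K(S)}$. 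This is precisely the sentence already inserted just before the corollary statement (``The intersection of all closed two sided ideals equals the closure of the minimal ideal''), so the proof is essentially: invoke that remark, invoke Theorem \ref{alpha}, conclude.

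Concretely I would write something like: \emph{By Theorem \ref{alpha}, $\mathfrak{W}_f$ is a closed two-sided ideal of $(\bN,\odot)$. Since $\overline{K(\odot)}$ is the smallest closed two-sided ideal of $(\bN,\odot)$ (it is closed and a two-sided ideal, and any closed two-sided ideal contains the minimal ideal $K(\odot)$, hence its closure), we get $\overline{K(\odot)}\subseteq\mathfrak{W}_f$, i.e. every $\U\in\overline{K(\odot)}$ is a witness of $f$ with the stated injectivity.}

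There is no real obstacle here; the only point requiring a sliver of care is making sure the hypotheses of Theorem \ref{alpha} are genuinely in force — namely that $f$ is homogeneous (given) and that $f$ is PR with injectivity $|\{x_{i_1},\ldots,x_{i_p}\}|\ge s$ (given), so that $\mathfrak{W}_f\neq\emptyset$ and the theorem applies. The nonemptiness is what lets us conclude $\overline{K(\odot)}\subseteq\mathfrak{W}_f$ is a statement with content rather than a vacuity. Everything else is the cited semigroup fact from \cite{HS11}.
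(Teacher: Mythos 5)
Your proof is correct and follows the paper's own route exactly: Theorem \ref{alpha} shows $\mathfrak{W}_f$ is a closed two-sided ideal of $(\bN,\odot)$, and the corollary then follows from the remark preceding it in the paper, namely that the intersection of all closed two-sided ideals equals $\overline{K(\odot)}$. You have merely spelled out the standard minimality argument that the paper leaves implicit, so there is nothing to correct.
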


Next we give an ultrafilter proof of a result by 
T.C. Brown and V. R\"odl, showing that
the class of PR of homogeneous functions 
is stable under the operation of ``inverting variables''.

\begin{thm}[\cite{BR91}]\label{inverses}
If a homogeneous function $f(x_1,\ldots,x_n)$
is PR with injectivity $|\{x_{i_1},\ldots,x_{i_p}\}|\ge s$
then also $f(1/x_1,\ldots,1/x_n)$ is PR
with injectivity $|\{x_{i_1},\ldots,x_{i_p}\}|\ge s$.
\end{thm}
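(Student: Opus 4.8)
The plan is to leverage the ultrafilter machinery already set up, in particular Corollary~\ref{ultrahomogeneous}, rather than argue directly with colorings. First I would pick any ultrafilter $\U\in\overline{K(\odot)}$, the closure of the minimal multiplicative ideal. Since $\overline{K(\odot)}$ is a multiplicative left ideal and $K(\odot)\ne\emptyset$, such $\U$ exist. By Corollary~\ref{ultrahomogeneous}, because $f$ is homogeneous and PR with the stated injectivity, this $\U$ is a witness of $f$ with injectivity $|\{x_{i_1},\ldots,x_{i_p}\}|\ge s$. The goal is to produce from $\U$ a witness of $g(x_1,\ldots,x_n):=f(1/x_1,\ldots,1/x_n)$ with the same injectivity, and then invoke Proposition~\ref{Ultrafilters Characterization} to conclude $g$ is PR with that injectivity.

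The key device is the ``reciprocal'' map $r:\N\to\Q$, $r(m)=1/m$, and the induced push-forward of ultrafilters. Given $A\in\U$, I want to find $a_1,\ldots,a_n$ with $g(a_1,\ldots,a_n)=0$ and the injectivity bound. The natural idea: apply the witnessing property of $\U$ not to $A$ itself but to a cleverly chosen set $\widehat A\in\U$ whose elements, after inversion, land inside $A$. Here is where homogeneity and the minimal-ideal choice of $\U$ pay off. Fix any $A\in\U$. The point is that for a suitable multiplier the inverted solution can be scaled back into $A$: if $b_1,\ldots,b_n$ solve $f(b_1,\ldots,b_n)=0$, then by homogeneity $f(\lambda/b_1,\ldots,\lambda/b_n)=\lambda^{\ell}f(1/b_1,\ldots,1/b_n)$, and also $f(1/b_1,\ldots,1/b_n)=(\prod b_i)^{-\ell'}\cdot(\text{something in }\Z)$ — more precisely, clearing denominators, $f(1/b_1,\ldots,1/b_n)=0 \iff f$ evaluated at the ``conjugate'' tuple $(\prod_{j\ne i}b_j)_i$ vanishes, since $(\prod_j b_j)\cdot(1/b_i)=\prod_{j\ne i}b_j$ and $f((\prod_j b_j)\cdot(1/b_i))_i=(\prod_j b_j)^{\ell}f(1/b_i)_i$. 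Thus $f(1/b_1,\ldots,1/b_n)=0$ exactly when $f\big(\prod_{j\ne 1}b_j,\ldots,\prod_{j\ne n}b_j\big)=0$. So ``solving $g=0$'' is equivalent to finding a solution of $f=0$ of the special \emph{conjugate} shape $(c_1,\ldots,c_n)$ with $c_i=\prod_{j\ne i}b_j$.

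So the real task reduces to: find in every $A\in\U$ a monochromatic solution of $f=0$ lying on the ``conjugate surface'' $\{(\prod_{j\ne i}b_j)_i : b_j\in\N\}$ — equivalently, find $b_1,\ldots,b_n\in\N$ and then take $a_i=\prod_{j\ne i}b_j$; one needs $a_i\in A$ and the injectivity among the appropriate $a_i$. The clean way to force this is to use the multiplicative structure of $\U$: because $\U\in\overline{K(\odot)}$ and $\overline{K(\odot)}$ is a left ideal, one can realize $\U=\V\odot\U$ for suitable $\V$ (or directly exploit that $\U$ is minimal). The argument I would run mimics the proof of Theorem~\ref{alpha}: given $A\in\U$, put $\widehat A=\{m\mid A/m\in\U\}\in\U$ (using $\U\odot\U$-type reasoning, valid since $\U$ lies in a multiplicative ideal so $A/m\in\U$ for $\U$-many $m$); apply the $f$-witnessing of $\U$ inside $\widehat A$ to get $b_1,\ldots,b_n\in\widehat A$ with $f(\vec b)=0$ and the injectivity; then pick $\lambda\in\bigcap_i A/b_{i}$ — wait, here the subtlety is that we need not $\lambda b_i$ but $\prod_{j\ne i}b_j$ in $A$. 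The main obstacle, and the step I expect to be delicate, is precisely choosing $\U$ and the auxiliary sets so that the conjugate products $\prod_{j\ne i}b_j$ — not just scalar multiples $\lambda b_i$ — can be driven into an arbitrary $A\in\U$; this likely requires iterating the left-ideal property $n-1$ times (or choosing $\U$ idempotent-like in a strong multiplicative sense, e.g.\ using Corollary~\ref{ultrahomogeneous} on the homogeneous function $f$ together with the fact that $\overline{K(\odot)}$ absorbs multiplicative translates), and carefully checking that the injectivity count is preserved — which it is, because $b\mapsto \prod_{j\ne i}b_j$ restricted to distinct coordinates of the relevant index set is injective when the $b_j$ are chosen pairwise distinct and large. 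I would handle the injectivity bookkeeping last, after the existence argument is in place, noting that distinctness of the $b$'s transfers to distinctness of the conjugate products under mild genericity that the witnessing property of $\U$ can be arranged to supply.
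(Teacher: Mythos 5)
Your reduction ``$f(1/b_1,\ldots,1/b_n)=0$ iff $f$ vanishes on the conjugate tuple $(\prod_{j\ne i}b_j)_i$'' is algebraically correct, but you then require the \emph{conjugate products} $\prod_{j\ne i}b_j$ to lie in the given $A\in\U$, whereas the elements that must be monochromatic for the partition regularity of $g(x_1,\ldots,x_n):=f(1/x_1,\ldots,1/x_n)$ are the arguments $b_1,\ldots,b_n$ themselves, not the products; the same confusion infects your injectivity bookkeeping, since the injectivity condition in the statement is on the $x_i$'s, i.e.\ on the $b_i$'s. More seriously, the step you yourself flag as delicate --- driving the relevant tuple into an arbitrary member of the ultrafilter --- is exactly the content of the theorem and is never carried out: ``iterating the left-ideal property $n-1$ times'' does not obviously produce a single tuple whose $n$ products of $n-1$ coordinates all land in $A$, and the intermediate claims you lean on ($\U=\V\odot\U$ for suitable $\V$, or $\{m\mid A/m\in\U\}\in\U$) hold for idempotents but not for arbitrary $\U\in\overline{K(\odot)}$. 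Finally, trying to make $\U$ itself witness $g$ is circular in spirit: one only knows a posteriori (once $g$ is proved PR and one checks $\mathfrak{W}_g$ is a closed two-sided ideal) that every element of $\overline{K(\odot)}$ witnesses $g$. So the proposal has a genuine gap at its core.

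The paper's proof sidesteps the conjugate-product difficulty entirely with a common-denominator trick: starting from a solution $a_1,\ldots,a_n$ of $f=0$ in a suitable set, it sets $b_i=k!/a_i$ for $k$ large enough, so that $1/b_i=a_i/k!$ is a \emph{common scalar multiple} of the $a_i$ and homogeneity gives $f(1/b_1,\ldots,1/b_n)=(1/k!)^{\ell}f(a_1,\ldots,a_n)=0$. The witness of $f(1/x_1,\ldots,1/x_n)$ is then built explicitly as $\bigl(\rho(\U)\odot\varphi(\U)\bigr)\cap\mathcal{P}(\N)$, where $\rho(n)=1/n$ and $\varphi(n)=n!$ push $\U$ to image ultrafilters on $\Q$; the very definition of the pseudo-product is what guarantees that, for any prescribed $B$ in that ultrafilter, one can choose the $a_i$ and then $k$ so that all $b_i=k!/a_i$ land in $B$. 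If you want to salvage your approach, replace the products $\prod_{j\ne i}b_j$ by $k!/a_i$ and construct the witness for $g$ as such an image/product ultrafilter rather than reusing $\U$ itself.
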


\begin{proof}
Pick a non-principal ultrafilter $\U$ on $\N$ that is a PR-witness of $f$.
Let $\rho:\N\to\Q$ be the ``reciprocal map'' 
$\rho:\N\to\Q$ where $\rho(n)=1/n$, and let
$\varphi:\N\to\Q$ be ``factorial map'' where $\varphi(n)=n!$.
Then consider the \emph{image ultrafilters} $\U_1=\rho(\U)$ and 
$\U_2=\varphi(\U)$.\footnote
{~Recall that if $\U$ is an ultrafilter on a set $I$
and $f:I\to J$ is a function, the image $f(\U)$ is the ultrafilter on $J$
where $A\in f(\U)\Leftrightarrow \{i\in I\mid f(i)\in A\}\in\U$
for every $A\subseteq J$.}
Since $\U_1,\U_2$ are ultrafilters on $\Q$,
it makes sense to consider their pseudo-product
$\V=\U_1\odot\U_2$ on $\Q$, which is defined similarly
as pseudo-products in $\beta\N$. We want to show that
$\N\in\V$, and the ultrafilter $\V_\N=\V\cap\mathcal{P}(\N)$
is a PR-witness of $f(1/x_1,\ldots,1/x_n)$.
By the definitions,
$$\N\in\V\ \Leftrightarrow\ \{u\in\Q\mid \N/u\in\U_2\}\in\U_1\ \Leftrightarrow\
\Lambda=\{n\in\N\mid \N/\!{}_{1/n}\in\U_2\}\in\U.$$
For every $n\in\N$, we have that
$$\N/\!{}_{1/n}\in\U_2\ \Leftrightarrow\
\Gamma_n=\{m\in\N\mid m!\in\N/\!{}_{1/n}\}=
\{m\in\N\mid m!/n\in\N\}\in\U.$$
Notice that $\Gamma_n\in\U$ because it contains
all $m\ge n$ and $\U$ is non-principal. So, $\Lambda=\N\in\U$,
and this proves that $\N\in\V$. 
Now let $B\in\V_\N$. Since $B\in\V$, the
set $\Lambda(B)=\{n\in\N\mid B/\!{}_{1/n}\in\U_2\}\in\U$, and hence
there exist $a_1,\ldots,a_n\in \Lambda(B)$
such that $|\{a_{i_1},\ldots,a_{i_p}\}|\ge s$ and $f(a_1,\ldots,a_n)=0$.
Now recall that $B/{}_{1/a_i}\in\U_2\Leftrightarrow 
\Gamma_i(B)=\{m\in\N\mid k!/a_i\in B\}\in\U$; in particular
we can pick $k\in\bigcap_{i=1}^n\Gamma_i(B)\in\U$.
Finally, notice that elements $b_i=k/a_i\in B$ are such that
$|\{b_{i_1},\ldots,b_{i_p}\}|\ge s$ and
$$f(1/b_1,\ldots,1/b_n)\ =\ 
f(a_1/k,\ldots,a_n/k)\ =\ 
1/k^\ell\cdot f(a_1,\ldots,a_n)\ =\ 0,$$
where $\ell$ is the degree of homogeneity of $f$.
\end{proof}

We are now ready to extend Rado's Theorem
on the side of sufficient conditions for PR.
Let us start with the following consequence of Theorem \ref{C},
which is particularly relevant to our purposes.

\begin{thm}\label{beta} 
Let $c\,(x_1-x_2)=P(y_{1},\dots,y_{k})$ be a Diophantine
equation where the polynomial $P$ has no constant term.
If the set $A\subseteq\N$ is IP-large and has positive Banach density
then there exist $\xi_1,\xi_2\in A$ and mutually distinct 
$\eta_{1},\dots,\eta_{k}\in A$ such that 
$c\,(\xi_1-\xi_2)=P(\eta_{1},\dots,\eta_{k})$.
Moreover, if $k=1$ then one can take $\xi_1\neq\xi_2$.
\end{thm}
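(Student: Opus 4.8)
The plan is to deduce the statement from the Bergelson--Furstenberg--McCutcheon result (Theorem \ref{C}) applied with $t=1$. The whole difficulty is to supply that theorem with $k$ additive IP-sets that (i) are all contained in $A$, and (ii) take pairwise distinct values at any common index $\alpha\in\Fin$ --- this matters because Theorem \ref{C} evaluates all $k$ IP-sets at a single $\alpha$. The scalar $c$ is then absorbed by running the argument on the set $cA$ in place of $A$.

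We may assume $c\ge 1$ and $P\not\equiv 0$ (if $P\equiv 0$ the equation is $x_1=x_2$, and the statement is immediate since $A$ is infinite). Since $A$ is IP-large it contains some $\text{FS}(g_i)_{i\in\N}$ with $G=(g_i)$ strictly increasing, hence $g_i\to\infty$; I would first pass to a sparse sub-IP-set, choosing inductively $i_1<i_2<\cdots$ so that, writing $h_m=g_{i_m}$, one has $h_{m+1}>\sum_{\ell\le m}h_\ell$ for every $m$, and in addition $h_1$ exceeds every root of $P$ in $\N$ (finitely many, as $\deg P\ge 1$). Then $\text{FS}(h_m)_{m\in\N}\subseteq A$, the fast-growth condition makes the map $S\mapsto n_S:=\sum_{i\in S}h_i$ injective on nonempty finite $S$, and no such $n_S$ is a root of $P$.

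Next I would partition $\N$ into infinite pieces $I_1,\dots,I_k$, let $\psi_j\colon\N\to I_j$ be the increasing bijection, and for $\alpha\in\Fin$ set $n^{(j)}_\alpha:=\sum_{i\in\alpha}h_{\psi_j(i)}=n_{\psi_j(\alpha)}$. Each $(n^{(j)}_\alpha)_{\alpha\in\Fin}$ is an additive IP-set --- disjoint $\alpha,\beta$ give disjoint $\psi_j(\alpha),\psi_j(\beta)$, so $n^{(j)}_\alpha+n^{(j)}_\beta=n^{(j)}_{\alpha\cup\beta}$ --- and they all lie in $\text{FS}(h_m)_m\subseteq A$. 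Now apply Theorem \ref{C} with $t=1$, with $E=cA$ (which has positive Banach density, since $\text{BD}(cA)=\tfrac1c\,\text{BD}(A)>0$), with the single polynomial $P_1=P$ (no constant term, by hypothesis), and with the IP-sets $(n^{(1)}_\alpha)_\alpha,\dots,(n^{(k)}_\alpha)_\alpha$. This yields $e_1,e_2\in cA$ and $\alpha\in\Fin$ with $e_1-e_2=P(n^{(1)}_\alpha,\dots,n^{(k)}_\alpha)$; writing $e_1=c\xi_1$, $e_2=c\xi_2$ with $\xi_1,\xi_2\in A$ and putting $\eta_j=n^{(j)}_\alpha\in A$ gives $c(\xi_1-\xi_2)=P(\eta_1,\dots,\eta_k)$.

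It remains to verify the distinctness claims. The sets $\psi_1(\alpha),\dots,\psi_k(\alpha)$ are pairwise disjoint nonempty finite subsets of $\N$ (disjoint since the $I_j$ partition $\N$, nonempty since $\alpha\neq\emptyset$), so injectivity of $S\mapsto n_S$ forces $\eta_j=n_{\psi_j(\alpha)}$ to be mutually distinct. For the ``moreover'', when $k=1$ we have $\eta_1\ge h_{\psi_1(1)}\ge h_1$, so $\eta_1$ is not a root of $P$, whence $e_1-e_2=P(\eta_1)\ne 0$ and thus $\xi_1\ne\xi_2$. The one genuinely delicate point is the reduction in the second paragraph: one must arrange, inside the given IP-set of $A$, a generating sequence that is simultaneously fast-growing (to secure unique finite-subset sums, hence distinctness of the $\eta_j$) and bounded below by the roots of $P$ (to secure $\xi_1\ne\xi_2$ when $k=1$); once this is in place, the rest is a single invocation of Theorem \ref{C} after the rescaling $A$ to $cA$.
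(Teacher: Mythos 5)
Your proposal is correct and follows essentially the same route as the paper's proof: both reduce to Theorem \ref{C} with $t=1$ and $E=cA$ (using $\text{BD}(cA)=\text{BD}(A)/|c|>0$), after first thinning the given IP-set of $A$ to one on which $\alpha\mapsto n_\alpha$ is injective. The only divergences are in auxiliary devices: the paper manufactures the $k$ IP-sets with pairwise distinct values at each $\alpha$ by applying the iterates of a permutation of $\N$ with no finite cycles to the index sets (whereas you partition $\N$ into $k$ infinite pieces, which makes the disjointness, and hence distinctness, argument slightly more transparent), and for $k=1$ the paper removes the finitely many roots of $P$ from $A$ and observes that the resulting set is still IP-large with positive Banach density, whereas you build root-avoidance into the choice of generators from the outset.
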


\begin{proof} 
First of all, notice that we can pick an IP-set 
$\left(n_{\alpha}\right)_{\alpha\in\mathcal{F}}\subseteq A$ 
such that $n_{\alpha}\neq n_{\beta}$ for $\alpha\neq\beta$. 
Indeed, given any IP-set $\text{FS}(g_i)_{i\in\N}$,
one can inductively define a sub-IP-set 
$\text{FS}(g'_i)\subseteq\text{FS}(g_i)$
with the desired property, by setting
$g'_1=g_1$ and $g_{i+1}=\min\{g_j\mid g_j>g'_1+\ldots+g'_i\}$.\par

Now fix a permutation $\sigma:\N\rightarrow\N$ with no finite cycles, 
\emph{i.e.} such that for every $s\in\N$, the iterated composition
$\sigma^{s}(n)\neq n$ for all $n\in\N$. 
This ensures that for every $\alpha\in\F$, the sets
$\alpha_{s}=\{\sigma^{s}(i)\mid i\in\alpha\}$
are pairwise distinct. Indeed, if $\alpha_{s+\ell}=\alpha_s$
for some $s,\ell\in\N$, then $\alpha_\ell=\alpha$ and
$\sigma^\ell$ would have a finite cycle, contradicting our assumption on $\sigma$.
In consequence, by our choice of the IP-set, we have
$n_{\alpha_s}\ne n_{\alpha_{s'}}$ for $s\ne s'$.
Moreover, for every $s$, 
the set $\{n_{\alpha_s}\mid\alpha\in\F\}$
is an IP-set, because for every $\alpha,\beta\in\F$
one has that $n_{{(\alpha\cup\beta)}_s}=n_{\alpha_s}+n_{\beta_s}$
whenever $\alpha\cap\beta=\emptyset$. (Notice that
$(\alpha\cup\beta)_s=\sigma^{s}(\alpha\cup\beta)=
\sigma^{s}(\alpha)\cup\sigma^{s}(\beta)=
\alpha_s\cup\beta_s$, where $\alpha_s\cap\beta_s=\emptyset$ because 
$\alpha\cap\beta=\emptyset$.)
 
Now consider the set $cA=\{c a\mid a\in A\}$. As
$\text{BD}(cA)=\frac{\text{BD}(A)}{|c|}>0$, we can
apply Theorem \ref{C} with $t=1$, $E=cA$,
$P_{1}=P(y_1,\ldots,y_k)$, and 
the IP-sets $(n_{\alpha}^{(s)})$ where $n^{(s)}_\alpha=n_{\alpha_s}$
for $s=1,\ldots,k$. We obtain the existence of elements
$x_1=c\,\xi_1$, $x_2=c\,\xi_2$ where $\xi_1,\xi_2\in A$,
and of numbers 
$\eta_{1}=n_{\alpha}^{(1)},\dots,\eta_{k}=n_{\alpha}^{(k)}$ 
such that 
$$x_1-x_2=c\cdot (\xi_1-\xi_2)=P(\eta_{1},\dots,\eta_{k}).$$
By our definition of the IP-sets $(n_{\alpha}^{(s)})$, the elements 
$\eta_{1},\dots,\eta_{k}$ are mutually distinct.
Finally, notice that when $k=1$ the polynomial $P(y_1)$ can
only have finitely many roots, and so the above arguments 
also apply to $A'=A\setminus\{\text{roots of P}\}$,
which is still an IP-large set with positive Banach density.\footnote
{~This argument 
does not apply to the general case $k>1$; indeed, while 
$A'$ still has positive Banach density, it may no more be 
additively IP-large.}
Clearly, in this case $\xi_1\ne\xi_2$ because
$\xi_1-\xi_2=P(\eta_1)\ne 0$.
\end{proof}

We can now isolate a simple sufficient condition
for a Diophantine nonlinear equation to be PR.

\begin{defn}
A polynomial with integers coefficients is
called a \emph{Rado polynomial} if it can be written in the form
$$c_{1}x_{1}+\dots+c_{n}x_{n}+P(y_{1},\dots,y_{k})$$
where $n\ge 2$, $P$ has no constant term,
and there exists a nonempty subset $J\subseteq\{1,\dots,n\}$ such that
$\sum_{j\in J} c_{j}=0$.\footnote
{~It is assumed that the sets of variables 
$\{x_1,\ldots,x_n\}$ and $\{y_1,\ldots,y_k\}$ are disjoint.}
\end{defn}

Notice that, by Rado's Theorem, a \emph{linear} polynomial 
with integer coefficients is PR if and only if it is a Rado polynomial.
We now show that one implication
in Rado's theorem (namely, that every Rado polynomial is PR with certain injectivity conditions) can be extended to \emph{all}
Rado polynomials. 

\begin{thm}\label{Generalized Rado} 
Let
$$R(x_1,\ldots,x_n,y_1,\ldots,y_k)=
c_1x_1+\ldots+c_n x_n+P(y_1,\ldots,y_k)$$
be a Rado polynomial. 
Then every ultrafilter 
$\U\in\overline{K(\odot)}\cap\overline{\Idem(\oplus)}\cap\BD$
is a PR-witness of $R$ with injectivity 
$|\{x_1,\dots,x_n\}|\geq n-1$ and $|\{y_1,\dots,y_k\}|=k$. 

When $n=2$, every 
$\U\in\overline{\Idem(\oplus)}\cap\BD$
satisfies the above property, and one has
injectivity $|\{x_1,x_2\}|=2$ if $k=1$.
Moreover, if $P\ne 0$ is linear then every $\U\in\overline{K(\odot)}$
is a PR-witness of $R$ with full 
injectivity $|\{x_1,\ldots,x_n,y_1,\ldots,y_k\}|=n+k$.
\end{thm}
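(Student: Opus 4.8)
The plan is to exploit the Rado structure of $R$ to split off a monochromatic linear part and reduce the remaining problem to Theorem \ref{beta}. Fix a nonempty $J \subseteq \{1,\dots,n\}$ with $\sum_{j \in J} c_j = 0$; after reindexing, say $J = \{1,\dots,m\}$ and set $c = c_{m+1}$ if $m < n$ (the case $m = n$ needs separate, easy handling, or can be absorbed by allowing a trivial linear tail). The idea is that if we force $x_1 = \dots = x_m$ (a single common value $\xi$), then $c_1 x_1 + \dots + c_m x_m = (\sum_{j\in J} c_j)\xi = 0$, so the equation $R = 0$ collapses to
$$c_{m+1}x_{m+1} + \dots + c_n x_n + P(y_1,\dots,y_k) = 0,$$
and we still have the free variables $x_{m+1},\dots,x_n$ to play with. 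When $n = m$ this is already $P = 0$, which is handled by Theorem \ref{beta} with $\xi_1 = \xi_2$; when $n > m$, the natural move is to further specialize $x_{m+2} = \dots = x_n = \xi$ as well and to put $x_{m+1} = \xi_1$, $x_m = \dots$ — more precisely, use the coefficient relation to arrange that the linear tail reads $c_{m+1}(\xi_1 - \xi_2)$ for a suitable choice, which is exactly the shape $c(x_1 - x_2) = P(y_1,\dots,y_k)$ required by Theorem \ref{beta}.

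First I would record that for $\U \in \overline{\Idem(\oplus)} \cap \BD$, every $A \in \U$ is simultaneously IP-large (by (B1)) and of positive Banach density (by definition of $\BD$), so Theorem \ref{beta} applies verbatim inside any $A \in \U$: there are $\xi_1,\xi_2 \in A$ and mutually distinct $\eta_1,\dots,\eta_k \in A$ with $c(\xi_1 - \xi_2) = P(\eta_1,\dots,\eta_k)$, and $\xi_1 \ne \xi_2$ when $k = 1$. This already settles the case $n = 2$: take $m = 1$ or $m = 2$ according to which subset $J$ works; in the $m=2$ case one has $c_1 = -c_2$ and the equation is $c_1(x_1 - x_2) + P(\vec y) = 0$, i.e. exactly Theorem \ref{beta}'s shape with $c = c_1$, $\xi_1 = x_1$, $\xi_2 = x_2$; in the $m = 1$ case $c_1 = 0$ and the equation is $c_2 x_2 + P(\vec y) = 0$, again of the required form after renaming (with one side a single variable, so Theorem \ref{beta}'s distinctness clause for $k=1$ gives $|\{x_1,x_2\}| = 2$). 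For general $n$ I would instead first apply Lemma \ref{lemmasystem} to glue together $n - 1$ "diagonal" copies, forcing $x_1 = \dots = x_m$ and $x_{m+2} = \dots = x_n$ all equal to a common value, then reduce to Theorem \ref{beta} for the two-variable remnant, obtaining a monochromatic solution in which exactly the pair $\{x_{m+1}, \text{(common value)}\}$ may differ — hence injectivity $|\{x_1,\dots,x_n\}| \ge n-1$ — and the $\eta_i$'s remain distinct, giving $|\{y_1,\dots,y_k\}| = k$. The hypothesis $\U \in \overline{K(\odot)}$ enters only to ensure the diagonal-gluing works at the ultrafilter level: more honestly, one uses that $\overline{K(\odot)} \cap \overline{\Idem(\oplus)} \cap \BD \ne \emptyset$ (which follows from (B1), (B4), (B6) and the fact that $\overline{K(\odot)}$ is the intersection of all closed $\odot$-two-sided ideals, so it meets the closed $\odot$-left ideal $\overline{\Idem(\oplus)} \cap \BD$) — or one quotes Corollary \ref{ultrahomogeneous} if a homogeneity reduction is available.

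For the final clause, suppose $P \ne 0$ is linear, say $P(y_1,\dots,y_k) = \sum d_i y_i$ with no constant term and some $d_i \ne 0$. Then $R$ is itself a linear Rado polynomial in $n + k$ variables (Rado's condition still witnessed by the same $J$), so by Rado's Theorem together with Theorem \ref{injectiveRado} it is injectively PR, i.e. PR with full injectivity $|\{x_1,\dots,x_n,y_1,\dots,y_k\}| = n + k$. Now $R$ is homogeneous of degree $1$, so Theorem \ref{alpha}/Corollary \ref{ultrahomogeneous} applies: the set of PR-witnesses with that injectivity is a closed $\odot$-two-sided ideal, hence contains $\overline{K(\odot)}$, which is precisely the assertion. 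I expect the main obstacle to be purely bookkeeping: carefully setting up the diagonalization so that the injectivity counts $n - 1$ and $k$ come out exactly right — in particular checking that the variables $x_i$ being collapsed are genuinely distinct in index from the $y_j$'s (guaranteed by the footnote in the definition of Rado polynomial) and that the single "special" variable $x_{m+1}$ is the only source of a possible extra distinct value — and handling the degenerate sub-cases ($m = n$, or $J$ not containing the index we want to keep free) without cluttering the argument. The genuinely substantive input, Theorem \ref{beta}, is already in hand.
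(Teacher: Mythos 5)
Your handling of the case $n=2$ and of the final clause (linear $P\ne 0$, via Theorem \ref{injectiveRado} plus Corollary \ref{ultrahomogeneous}) matches the paper and is fine. But your argument for the core case $n\ge 3$ has a genuine gap, in fact two. First, after forcing $x_1=\dots=x_m=x_{m+2}=\dots=x_n=\xi$ the equation becomes $\bigl(\sum_{i=m+2}^{n}c_i\bigr)\xi+c_{m+1}x_{m+1}+P(\vec y)=0$, and for this to have the shape $c(\xi_1-\xi_2)=P(\vec y)$ demanded by Theorem \ref{beta} you would need $\sum_{i=m+2}^{n}c_i=-c_{m+1}$, i.e.\ that the complement of $J$ is itself a Rado set. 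The hypothesis only gives $\sum_{j\in J}c_j=0$ and says nothing about the remaining coefficients; e.g.\ for $x_1-x_2+3x_3+P(y)=0$ your collapse leaves $3x_3=-P(y)$, which is not a difference equation and is not covered by Theorem \ref{beta}. The ``coefficient relation'' you invoke does not exist. Second, even where the algebra happens to work, a solution in which all but one of the $x_i$ share a common value has $|\{x_1,\ldots,x_n\}|\le 2$, so it cannot certify the claimed injectivity $|\{x_1,\ldots,x_n\}|\ge n-1$ once $n\ge 4$; your parenthetical ``hence injectivity $\ge n-1$'' has the inequality pointing the wrong way.

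The paper's device, which you are missing, is to \emph{not} collapse any $x$-variables. Instead it introduces two fresh variables $z,w$ and glues (via Lemma \ref{lemmasystem}, through the identification $z=w$) the two equations $c_1z+c_2x_2+\dots+c_nx_n=0$ and $c_1(w-x_1)=P(y_1,\ldots,y_k)$. The first is a homogeneous linear Rado equation in $n\ge 3$ variables, hence injectively PR by Theorem \ref{injectiveRado} and witnessed by every $\U\in\overline{K(\odot)}$ by Corollary \ref{ultrahomogeneous} --- this is where $\overline{K(\odot)}$ actually enters, not in making the gluing work. The second is exactly the difference form of Theorem \ref{beta}, witnessed by every $\U\in\overline{\Idem(\oplus)}\cap\BD$ with $|\{y_1,\ldots,y_k\}|=k$. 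Substituting $z=w$ and eliminating gives $R=0$ with $z,x_2,\ldots,x_n$ pairwise distinct, so only $x_1$ can collide with another $x_i$, yielding $|\{x_1,\ldots,x_n\}|\ge n-1$. The substantive inputs you identified (Theorem \ref{beta}, Lemma \ref{lemmasystem}, homogeneity of the linear part) are the right ones, but they must be assembled around the auxiliary pair $z=w$ rather than around a diagonal specialization.
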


Notice that the set
$\overline{K(\odot)}\cap\overline{\Idem(\oplus)}\cap\BD$
is nonempty; indeed, it contains all 
\emph{combinatorially rich ultrafilters}.

\begin{proof} 
Assume first that $n\geq 3$, and consider the following system:
$$\begin{cases}
c_1 z+c_2x_2+\ldots+c_{n}x_{n}=0;
\\
c_1(w-x_1)=P(y_1,\ldots,y_k);
\\
z=w.
\end{cases}$$

The first equation is injectively PR
by Theorem \ref{injectiveRado} and, since it is
homogeneous, it is witnessed by any 
$\U\in\overline{K(\odot)}$, by Corollary \ref{ultrahomogeneous}.
Moreover, if $\U\in\overline{\Idem(\oplus)}\cap\BD$,
every $A\in\U$ is additively IP-large and has positive Banach density
and so, by Theorem \ref{beta}, the second equation
is witnessed by $\U$ with injectivity $|\{y_1,\ldots,y_k\}|=k$.
Then, by Lemma \ref{lemmasystem}, 
every $\U\in\overline{K(\odot)}\cap\overline{\Idem(\oplus)}\cap\BD$ 
is a witness of the above system
with injectivity $|\{z,x_2,\ldots,x_n\}|=n$
and $|\{y_1,\ldots,y_k\}|=k$.
By combining, we finally obtain that $\U$ is a witness of the equation
$$c_1x_1+c_2 x_2+\ldots+c_n x_n+P(y_1,\ldots,y_k)=0$$
with the desired injectivity properties.

When $n=2$, by the hypothesis of Rado polynomial, one has that
$c_1=-c_2=c$. In this case, the equation $R=0$ reduces
to the equation in Theorem \ref{beta}, that
applies to every set $A\in\U\in\overline{\Idem(\oplus)}\cap\BD$.

If $P\ne 0$ is linear, then the injective PR of $R$
is given by Theorem \ref{injectiveRado}.
In this linear case, $R$ is trivially homogeneous and so
every $\U\in\overline{K(\odot)}$ is a witness, 
by Corollary \ref{ultrahomogeneous}.
\end{proof}

We are finally ready to state the following theorem,
that further extends the class of nonlinear polynomials
proved to be PR.

\begin{thm}\label{main}
Let $\mathfrak{F}$ be the family of functions 
whose PR on $\N$ is witnessed by at least an ultrafilter  
$\U\in\overline{\MM(\odot)}\cap\overline{\Idem(\oplus)}\cap\BD$.
Then $\mathfrak{F}$ includes:

\begin{enumerate}
\item
Every Rado's polynomial
$$R(x_1,\ldots,x_n,y_1,\ldots,y_k)=c_1x_1+\ldots+c_n x_n+P(y_1,\ldots,y_k)$$
with injectivity $|\{x_1,\ldots,x_n\}|\ge n-1$ and
$|\{y_1,\ldots,y_k\}|=k$, and 
with injectivity $|\{x_1,x_2\}|=2$
when $n=2$ and $k=1$, and with
full injectivity $|\{x_1,\ldots,x_n,y_1,\ldots,y_k\}|=n+k$ when 
$P\ne 0$ is linear\,;

\item
Every function $f$ of the form
$$f(x,y_1,\ldots,y_k)=x-\prod_{i=1}^k y_i$$
with full injectivity $|\{x,y_1,\ldots,y_k\}|=k+1$;

\item
Every function $f$ of the form
$$f(x,y_1,\ldots,y_k)=x-\prod_{i=1}^k y_i^{a_i}$$
with full injectivity $|\{x,y_1,\ldots,y_k\}|=k+1$, whenever 
the exponents $a_i\in\Z$ satisfy $\sum_{i=1}^n a_i=1$.
\end{enumerate}

\noindent
Moreover, the family $\mathfrak{F}$ satisfies the following closure property:
\begin{enumerate}
\item[4.]
Assume  that $f(z,y_1,\ldots,y_k)\in\mathfrak{F}$ with injectivity
$|\{y_{i_1},\ldots,y_{i_p}\}|\ge s$ and that
$z-g(x_1,\ldots,x_n)\in\mathfrak{F}$
with injectivity $|\{x_{j_1},\ldots,x_{j_q}\}|\ge t$.
Then $f(g(x_1,\ldots,x_n),y_1,\ldots,y_k)\in\mathfrak{F}$
with injectivity $|\{x_{j_1},\ldots,x_{j_q}\}|\ge t-1$
and $|\{y_{i_1},\ldots,y_{i_p}\}|\ge s-1$.

\item[5.]
Assume that the homogeneous function $f(x_1,\ldots,x_n)\in\mathfrak{F}$
with injectivity $|\{x_{i_1},\ldots,x_{i_p}\}|\ge s$.
Then $f(1/x_1,\ldots,1/x_n)\in\mathfrak{F}$
with injectivity $|\{x_{i_1},\ldots,x_{i_p}\}|\ge s$.
\end{enumerate}
\end{thm}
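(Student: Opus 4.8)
The plan is to prove the five assertions essentially one at a time, leaning heavily on the machinery already assembled. For item (1), I would invoke Theorem \ref{Generalized Rado} directly: it already establishes that every $\U\in\overline{K(\odot)}\cap\overline{\Idem(\oplus)}\cap\BD$ witnesses a Rado polynomial with the stated injectivity, so I only need to observe that $\overline{\MM(\odot)}\subseteq\overline{K(\odot)}$ (since $\MM(\odot)=\Idem(\odot)\cap K(\odot)\subseteq K(\odot)$ and $K(\odot)$ is closed by (B6), being a closed ideal), hence $\overline{\MM(\odot)}\cap\overline{\Idem(\oplus)}\cap\BD\subseteq\overline{K(\odot)}\cap\overline{\Idem(\oplus)}\cap\BD\ne\emptyset$. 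For the $P\ne 0$ linear sub-case with full injectivity, the same inclusion together with the last sentence of Theorem \ref{Generalized Rado} suffices. I should also remark, as the paper already does, that this intersection is nonempty because it contains the combinatorially rich ultrafilters, whose existence is (B9).

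For items (2) and (3) I would work directly with a multiplicatively idempotent ultrafilter. Fix $\U\in\overline{\MM(\odot)}\cap\overline{\Idem(\oplus)}\cap\BD$; in particular $\U\in\overline{\Idem(\odot)}$, so by (B3) every $A\in\U$ is multiplicatively IP-large, i.e. contains a multiplicative IP-set $\mathrm{FP}(g_i)_{i\in\N}$, and as in the proof of Theorem \ref{beta} we may thin it so that distinct finite products are distinct. Given $A\in\U$, pick such $g_i\in A$ and set $y_i=g_i$ for $i=1,\ldots,k$ and $x=\prod_{i=1}^k g_i\in A$; these are $k+1$ mutually distinct elements of $A$ with $f(x,y_1,\ldots,y_k)=0$, giving (2). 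For (3), with $\sum_{i=1}^k a_i=1$, the natural move is to rewrite $\prod y_i^{a_i}=1$ multiplicatively: this is exactly the multiplicative Rado condition, so by the Multiplicative Rado's Theorem the function $x-\prod y_i^{a_i}$ (equivalently, after dividing, $\prod_{i=1}^k y_i^{a_i} x^{-1}=1$ with exponents summing to $0$) is PR, and since it is multiplicatively homogeneous one checks — paralleling Theorem \ref{alpha} but for the additive-to-multiplicative transfer, or more directly by exhibiting solutions inside a multiplicative IP-set as above — that any $\U$ with $\U\in\overline{\MM(\odot)}$ witnesses it; the injectivity $|\{x,y_1,\ldots,y_k\}|=k+1$ comes from arranging the exponents so that a generic point of the multiplicative IP-set yields distinct coordinates. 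The mild obstacle here is bookkeeping the injectivity when some $a_i$ coincide or are negative, which forces a passage to $\Q$ as in Theorem \ref{inverses} — but this is routine.

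For the closure property (4), the idea is substitution via Lemma \ref{lemmasystem}. If $\U$ witnesses $f(z,y_1,\ldots,y_k)=0$ and also witnesses $z-g(x_1,\ldots,x_n)=0$, and these two equations have disjoint variable sets apart from the shared $z$, then Lemma \ref{lemmasystem} (with the extra equation identifying the two copies of $z$) shows $\U$ witnesses the combined system, hence $f(g(x_1,\ldots,x_n),y_1,\ldots,y_k)=0$; the drop from $t$ to $t-1$ and from $s$ to $s-1$ in the injectivity is the price of possibly forcing one $x_j$ or one $y_i$ to equal the common value $z=g(\cdot)$. For (5), this is immediate from Theorem \ref{inverses}: that theorem's proof already produces a PR-witness of $f(1/x_1,\ldots,1/x_n)$ as $\V_\N=\V\cap\mathcal{P}(\N)$ with $\V=\rho(\U)\odot\varphi(\U)$; what needs checking is that if the original $\U$ can be taken in $\overline{\MM(\odot)}\cap\overline{\Idem(\oplus)}\cap\BD$, then so can $\V_\N$ — and this follows because $\overline{\MM(\odot)}$, $\overline{\Idem(\oplus)}$ and $\BD$ are all multiplicative left ideals by (B4), (B2) and (B6), so multiplying on the left by the fixed ultrafilter $\rho(\U)$ keeps us inside the intersection (after restricting back to $\N$, which is harmless since $\N\in\V$ was shown in the proof of Theorem \ref{inverses}). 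The main obstacle across the whole theorem is really just (3): making the multiplicative-Rado argument yield the full injectivity $k+1$ uniformly over all integer exponents $a_i$ summing to $1$, which requires the factorial/reciprocal trick to dodge principal-ultrafilter degeneracies.
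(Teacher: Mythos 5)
Your treatment of items (1), (2) and (4) coincides with the paper's proof (which indeed just cites Theorem \ref{Generalized Rado}, property (B3), and Lemma \ref{lemmasystem}, respectively), but items (3) and (5) contain genuine gaps.

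In (3), the Multiplicative Rado's Theorem only yields PR of $\prod_i y_i^{a_i}\,x^{-1}=1$, not the \emph{full injectivity} $|\{x,y_1,\ldots,y_k\}|=k+1$ that the statement requires, and your proposed fix (``passage to $\Q$ as in Theorem \ref{inverses}'') does not address injectivity at all. The mechanism that actually works, and the one the paper uses, is to conjugate by $n\mapsto 2^n$: given a coloring $\N=C_1\cup\ldots\cup C_r$, pass to $D_s=\{n\mid 2^n\in C_s\}$ and apply Theorem \ref{injectiveRado} to the \emph{linear} equation $\sum_i a_i\eta_i=\xi$ to obtain pairwise distinct monochromatic exponents, whose powers of $2$ give the injective solution. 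Moreover, no ``multiplicative analogue of Theorem \ref{alpha}'' is needed: since $\sum_i a_i=1$, the function $x-\prod_i y_i^{a_i}$ is homogeneous of degree $1$ in the paper's ordinary sense, so Corollary \ref{ultrahomogeneous} applies verbatim and every $\U\in\overline{\MM(\odot)}\subseteq\overline{K(\odot)}$ is an injective witness.

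In (5), your left-ideal argument breaks down. First, (B4) asserts that $\overline{\MM(\oplus)}$ (additive) is a multiplicative left ideal; it says nothing about $\overline{\MM(\odot)}$. More fatally, a multiplicative left ideal $L$ satisfies $\bN\odot L\subseteq L$, so to place $\V=\rho(\U)\odot\varphi(\U)$ in $L$ you would need the \emph{right-hand} factor $\varphi(\U)$ to lie in $L$; but $\varphi(\U)$ contains $\varphi(\N)=\{n!\mid n\in\N\}$, a set of Banach density zero, so $\varphi(\U)\notin\BD$ (and $\rho(\U)$ is not even an ultrafilter on $\N$, so the ideal properties in $\bN$ do not apply to the product as written). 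The paper avoids tracking the witness altogether: Theorem \ref{inverses} gives that $f(1/x_1,\ldots,1/x_n)$ is PR with the stated injectivity; this function is again homogeneous, so by Corollary \ref{ultrahomogeneous} every $\U\in\overline{K(\odot)}$ is a witness --- in particular every element of the nonempty set $\overline{\MM(\odot)}\cap\overline{\Idem(\oplus)}\cap\BD$, which is all that membership in $\mathfrak{F}$ requires.
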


\begin{proof}
(1). This is Theorem \ref{Generalized Rado}.

(2). Since $\U\in\overline{\Idem(\odot)}$, 
every $A\in\U$ is multiplicatively IP-large,
and so it contains injective
solutions to the equation $x=\prod_{i=1}^n y_i$.

(3). Notice first that 
$\prod_{i=1}^n y_i^{a_i}=x$ is injectively PR.
Indeed, given a finite coloring $\N=C_1\cup\ldots\cup C_r$,
one considers the partition as given
by the sets $D_s=\{n\mid 2^n\in C_i\}$. 
By Theorem \ref{injectiveRado},
one finds pairwise distinct monochromatic
$\eta_1,\ldots,\eta_n,\xi\in D_s$ such that
$\sum_{i=1}^n a_i \eta_i=\xi$. Then
$2^{\eta_1},\ldots,2^{\eta_n},2^\xi\in C_s$ are an injective
monochromatic solution of $\prod_{i=1}^n y_i^{a_i}=x$.
Now, the function $f(x,y_1,\ldots,y_k)=x-\prod_{i=1}^k y_i^{a_i}$
is homogeneous since $\sum_{i=1}^n a_i=1$
and so, by Corollary \ref{ultrahomogeneous}, every ultrafilter 
$\U\in\overline{\MM(\odot)}\subseteq\overline{K(\odot)}$ is 
an injective witness.

(4). It directly follows from Lemma \ref{lemmasystem}. 

(5). By Theorem \ref{inverses}, 
we have that $f(1/x_1,\ldots,1/x_n)$
is PR with injectivity $|\{x_{j_1},\ldots,x_{j_q}\}|\ge t$. 
Since the function is homogeneous, such a PR is witness by all
ultrafilters $\U\in\overline{K(\odot)}$, by Corollary \ref{ultrahomogeneous}.
\end{proof}

Let us now give some examples of equations whose PR
is obtained by applying Theorem \ref{main}.

\begin{ex}\label{ex-one}
Consider the injectively PR polynomials 
$x_{1}x_{2}=z^{2}$, $y_{1}+y_{2}=y_{3}$ and 
$t_{1}-t_{2}=t_{3}$, which are in $\mathfrak{F}$.
Then, by the closure property (4), it follows that the 
following equations are PR with full injectivity. 

\begin{itemize}
\item
$x(y_{1}+y_{2})=z^{2}$,
\item
$x(t_{1}-t_{2})=z^{2}$,
\item
$x_{1}x_{2}=(y_{1}+y_{2})^{2}$,
\item
$x_{1}x_{2}=(t_{1}-t_{2})^{2}$,
\item
$x(t_{1}-t_{2})=(y_{1}+y_{2})^{2}$,
\item
$x(y_{1}+y_{2})=(t_{1}-t_{2})^{2}$,
\item
$(y_{1}+y_{2})(t_{1}-t_{2})=z^{2}$,
\end{itemize}
\end{ex}

\begin{ex}\label{ex-two}
{\rm The example above generalizes as follows: 
Let $n,m\in\N$ and assume that, for every $i\leq n$, $j\leq m$,
the equations 
$$x_{i,1}=\sum_{h=1}^{r_{i}}c_{i,h}x_{i,h}, \ y_{j,1}=\sum_{k=1}^{s_{j}}d_{j,k}y_{j,k}$$
are PR. Let $a_{1},\dots,a_{n},b_{1},\dots, b_{m}$ be such that $\sum_{i=1}^{n}a_{i}=\sum_{j=1}^{m}b_{j}$ and consider the homogeneous PR equation $\prod_{i=1}^{n}t_{i}^{a_{i}}=\prod_{j=1}^{m}z^{b_{j}}$. All these equations are PR and homogeneous and therefore,
by the closure property (4), also
$$\prod_{i=1}^{n}\left(\sum_{h=1}^{r_{i}}c_{i,h}x_{i,h}\right)^{a_{i}}=\ \prod_{j=1}^{m}\left(\sum_{k=1}^{s_{j}}d_{j,k}y_{j,k}\right)^{b_{j}}$$
is PR with full injectivity.}
\end{ex}

\begin{ex}\label{ex-three}
{\rm Notice that all the equations considered in the previous examples
are homogeneous. Therefore by the closure property (5) applied to some of 
the equations of Example \ref{ex-one} we obtain, \emph{e.g.}, 
that the following equations are PR with full injectivity.}

\begin{itemize}
\item
$x^{2}y_{1}y_{2}=z^{2}(y_{1}+y_{2})$\,;
\item
$x^{2}t_{1}t_{2}=z^{2}(t_{2}-t_{1})$\,;
\item
$(y_{1}+y_{2})(t_{2}-t_{1})z^{2}=y_{1}y_{2}t_{1}t_{2}$.
\end{itemize}
\end{ex}

\begin{ex}\label{ex-four}
{\rm Consider the injectively PR polynomials $z_{1}-\sum_{i=1}^n x_i$ and
$z_{2}-\prod_{j=1}^k y_j$, which are in $\mathfrak{F}$.
Then, by the closure property, the equation 
$$\sum_{i=1}^n x_i=\prod_{j=1}^k y_j$$
is PR with injectivity $|\{x_1,\ldots,x_n\}|=n$ and $|\{y_1,\ldots,y_k\}|=k$.
(This result was first proved by N. Hindman in \cite{H11}.)}
\end{ex}

\begin{ex}\label{ex-five}
{\rm Let $R(x_{1},\dots,x_{n})=\sum_{i=1}^{n}c_{i}x_{i}$ be a linear Rado polynomial with $n\geq 3$ and for every $i=1,\dots,n$, 
let $k_{i}\geq 2$ be a given natural number. Consider the polynomials 
$$S_{i}(z_{i},y_{i,1},\dots,y_{i,k_{i}}):=z_{i}-\prod_{j=1}^{k_{i}}y_{i,j}.$$ 
Notice that 
$R(x_{1},\dots,x_{n})$ and $S_{i}(z_{i},y_{i,1},\dots,y_{i,k_{i}})\in\mathfrak{F}$ with full injectivity.
Then we can apply the closure property of $\mathfrak{F}$ 
to the system
$$\begin{cases}
R(x_{1},\dots,x_{n})=0;\\
S_{i}(z_{i},y_{i,1},\dots,y_{i,k_{i}}), & i=1,\ldots,n;
\\
x_{i}=z_{i}, & i=1,\ldots,n.
\end{cases}$$ 
It follows that the equation
$$\sum_{i=1}^{n} \left(c_{i}\prod_{j=1}^{k_{i}}y_{i,j}\right)=0$$
is in $\mathfrak{F}$ with full injectivity. 
(This result was proved, in a more general form, in \cite[Theorem 3.3]{LB14}.)}
\end{ex}

\begin{ex}\label{ex-six}
{\rm For every $n\in\mathbb{N}$, the function $u-v-z^{n}$ is in 
$\mathfrak{F}$ with full injectivity; moreover, for every $k\geq 2$ 
the function $x=\prod_{j=1}^{k}x_{j}$ is in $\mathfrak{F}$ with full injectivity. Therefore, for every $h,k\geq 2$ we can apply the closure property
of $\mathfrak{F}$ to the system
$$\begin{cases}
u-v=z^{n};\\
x=\prod_{j=1}^{h}x_{j};\\
y=\prod_{j=1}^{k}y_{j};\\
x=t; 
\\
y=v.
\end{cases}$$ 
This shows that the equation
$$\prod_{j=1}^{h}x_{j}-\prod_{j=1}^{k}y_{j}=z^{n}$$
is in $\mathfrak{F}$ with full injectivity.\footnote
{~However, as it will be shown in Section \ref{nonPR},
the equation $x^n-y^m=z^k$ is not PR if $n\notin\{m,k\}$.}}
\end{ex}

Let us notice that for $n=h=k=2$, Example \ref{ex-six} reduces to
$$x_{1}x_{2}=y_{1}y_{2}+z^{2}.$$
Such an equation can be considered as a modified version
of the Pythagorean equation $x^{2}=y^{2}+z^{2}$, whose partition regularity 
is one of the most interesting and challenging open problems 
in this field.\footnote
{~As mentioned in the Introduction,
this problem was posed in 1975 by P. Erd\"{o}s and R. Graham 
(see, \emph{e.g.} \cite{G07}). 
The first important progress has been rcently
obtained by M. J. H. Heule, O. Kullmann and V. W. Marek \cite{HKM16},
who proved that every 2-coloring of $\N$ contains a monochromatic 
Pythagorean triple.}

The range of Theorem \ref{main} includes a large family of PR polynomials;
however there exist polynomials that are known to be PR
but do not belong to this family.

\begin{ex}
{\rm The polynomial $P(x_1,x_2,x_3)=x_1x_2-2x_3$
is PR but it does not belong to the family
$\mathfrak{F}$ of Theorem \ref{main}.\footnote
{~Other examples of PR polynomials not included in the 
family $\mathfrak{F}$ of Theorem \ref{main},
can be found in \cite{LB14}.}}
\end{ex}

Indeed, given a finite coloring $\N=C_1\cup\ldots\cup C_r$,
consider the coloring $\N=C'_1\cup\ldots\cup C'_r$
where $C'_i=\{n\in\N\mid 2^n\in C_i\}$.
By the non-homogenous part of Rado's Theorem,
the polynomial  $y_1+y_2-y_3-1$ is PR. 
Let $a,b,c\in C'_i$ be monochromatic numbers 
such that $a+b-c-1=0$. Then $2^a,2^b,2^c\in C_i$ are monochromatic 
solutions $P(2^a,2^b,2^c)=0$.

Assume by contradiction that 
$P(x_1,x_2,x_3)=x_1x_2-2x_3\in\mathfrak{F}$.
Notice that the polynomial $x_3-y_1y_2\in\mathfrak{F}$,
and so, by the closure property (4), also
$P(x_1,x_2,y_1y_2):=x_1 x_2-2y_1y_2$ would belong to $\mathfrak{F}$.
This is not possible because $x_1 x_2-2y_1y_2$ is not PR.
To see this, consider the partition $\N=C_1\cup C_2$
where $C_1$ is the set of natural numbers $n$
such that the greatest exponent $k$ with $2^k\,|\, n$ is even.
It is easily verified that $a_1,a_2,b_1,b_2\in C_i\Rightarrow
a_1a_2\ne 2b_1b_2$ for $i=1,2$.

\smallskip
The previous example 
shows that being Rado is not a necessary condition 
for a polynomial to be PR. 

\section{Necessary conditions for PR}
\label{nonPR}

In this section we isolate necessary conditions
for a Diophantine equation to be PR.
Instead of working in the space of ultrafilters $\beta\N$
as done in the previous section,
here we will use a different, although closely related, 
non-elementary technique, namely \emph{nonstandard analysis}
on the \emph{hypernatural numbers} $\hN$. 
We will assume the reader to be familiar
with the fundamental notions and results of nonstandard analysis,
namely \emph{hyper-extensions} (or nonstandard extensions)
of sets and function, the \emph{transfer} principle, the \emph{overspill}
principle, the $\kappa$-\emph{saturation} property.
All these topics can be found in any of the monographies 
on nonstandard analysis (see, \emph{e.g.}, the books \cite{Go98, LW15}). 

\subsection{$u$-equivalence}

In the following, we will work in a $\mathfrak{c}^+$-saturated extension of $\N$. In addition to the fundamental principles
of nonstandard analysis, our proofs will also use properties 
of the relation of \emph{$u$-equivalence} on hypernatural numbers,  
as introduced by the first named author in \cite{DN15b}.
(See also \cite{DN15c}, where $u$-equivalent pairs are named 
\emph{indiscernible}, and \cite{DN15}, \cite{LB12}, where many algebraic properties of $u$-equivalence are proved by means of iterated hyperextensions.) 

\begin{defn}
Two hypernatural numbers $\xi,\zeta\in\hN$ are
\emph{$u$-equivalent} if they cannot be distinguished 
by any hyperextension, \emph{i.e.}
if for every $A\subseteq\N$ one has
either $\xi,\xi'\in{}^*A$ or $\xi,\xi'\notin{}^*A$.
\end{defn}

The ``$u$'' in $u$-equivalence stands for ``ultrafilter''.
Indeed, to every $\xi\in\hN$ is associated
an ultrafilter $\mathfrak{U}_\xi=\{A\subseteq\N\mid \xi\in{}^*A\}$,
and $\xi\ueq\zeta$ means that the associated ultrafilters coincide:
$\mathfrak{U}_\xi=\mathfrak{U}_\zeta$.

We will use the following properties (see \cite{DN15b}).

\begin{itemize}
\item[(U1)]
If $k\in\N$ is finite and $\xi\ueq k$ then $\xi=k$\,;
\item[(U2)]
For every $f:\N\to\N$, if
$\xi\ueq\zeta$ then ${}^*f(\xi)\ueq{}^* f(\zeta)$\,;
\item[(U3)]
For every $f:\N\to\N$, if ${}^*f(\xi)\ueq\xi$ then ${}^*f(\xi)=\xi$.
\item[(U4)]
$\xi\ueq\zeta$ and $\xi<\zeta$ imply $\zeta-\xi$ infinite.
\end{itemize}

In the language of nonstandard analysis,
we have the following counterpart of 
Proposition \ref{Ultrafilters Characterization} (see also \cite{LB12}, Theorem 2.2.9, which gives a more general version of this result).

\begin{prop}\label{U-equivalence Characterization}
A function $f(x_1,\ldots,x_n)$ is partition regular
with injectivity $|\{x_{i_1},\ldots,x_{i_p}\}|\ge s$
if and only if there exist hypernatural numbers
$\xi_1\ueq\ldots\ueq\xi_n$ with
${}^*f(\xi_1,\ldots,\xi_n)=0$ and $|\{\xi_{i_1},\ldots,\xi_{i_p}\}|\ge s$.
\end{prop}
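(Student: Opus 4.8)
The statement is a nonstandard reformulation of the ultrafilter characterization (Proposition \ref{Ultrafilters Characterization}), so the plan is to translate between the two languages using the standard dictionary: a hypernatural number $\xi$ corresponds to the ultrafilter $\mathfrak{U}_\xi=\{A\subseteq\N\mid\xi\in{}^*A\}$, and $\xi\ueq\zeta$ means exactly $\mathfrak{U}_\xi=\mathfrak{U}_\zeta$. First I would prove the forward direction ($\Rightarrow$). Assume $f$ is PR with injectivity $|\{x_{i_1},\ldots,x_{i_p}\}|\ge s$. By Proposition \ref{Ultrafilters Characterization} there is a PR-witness ultrafilter $\U$. The key tool is $\mathfrak{c}^+$-saturation: the family of internal sets $\{\,{}^*A\mid A\in\U\,\}$ together with the internal conditions imposed below has the finite intersection property, so I can realize a tuple inside their intersection. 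Concretely, for each finite $A\in\U$ the set of tuples $(a_1,\ldots,a_n)\in A^n$ with $f(a_1,\ldots,a_n)=0$ and $|\{a_{i_1},\ldots,a_{i_p}\}|\ge s$ is nonempty (since $\U$ is a PR-witness); these sets form a downward-directed family indexed by $\U$, and by $\mathfrak{c}^+$-saturation (note $|\U|\le\mathfrak{c}$, since $\U\subseteq\mathcal{P}(\N)$) their hyperextensions have nonempty intersection. Picking $(\xi_1,\ldots,\xi_n)$ in this intersection gives, by transfer, ${}^*f(\xi_1,\ldots,\xi_n)=0$ and $|\{\xi_{i_1},\ldots,\xi_{i_p}\}|\ge s$; and for every $A\in\U$ each $\xi_j\in{}^*A$, so $\U\subseteq\mathfrak{U}_{\xi_j}$ for all $j$, whence $\mathfrak{U}_{\xi_1}=\cdots=\mathfrak{U}_{\xi_n}=\U$ (ultrafilters are maximal), i.e. $\xi_1\ueq\cdots\ueq\xi_n$.

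For the reverse direction ($\Leftarrow$), suppose such hypernaturals $\xi_1\ueq\cdots\ueq\xi_n$ exist with ${}^*f(\xi_1,\ldots,\xi_n)=0$ and $|\{\xi_{i_1},\ldots,\xi_{i_p}\}|\ge s$. Set $\U=\mathfrak{U}_{\xi_1}=\cdots=\mathfrak{U}_{\xi_n}$, which is an ultrafilter on $\N$. I claim $\U$ is a PR-witness of $f$ with the stated injectivity, and then Proposition \ref{Ultrafilters Characterization} finishes the proof. Indeed, let $A\in\U$. Then $\xi_j\in{}^*A$ for all $j$, so the internal statement ``there exist $a_1,\ldots,a_n\in{}^*A$ with ${}^*f(a_1,\ldots,a_n)=0$ and $|\{a_{i_1},\ldots,a_{i_p}\}|\ge s$'' holds in the nonstandard universe (witnessed by the $\xi_j$). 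By transfer, the corresponding standard statement ``there exist $a_1,\ldots,a_n\in A$ with $f(a_1,\ldots,a_n)=0$ and $|\{a_{i_1},\ldots,a_{i_p}\}|\ge s$'' holds, which is precisely the defining property of a PR-witness.

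The main obstacle — and the only non-formal point — is the use of $\mathfrak{c}^+$-saturation in the forward direction to produce a single tuple realizing the whole filter $\U$ simultaneously with the algebraic constraint $f=0$ and the injectivity constraint; one must check the relevant family of internal sets is genuinely downward-directed (it is, since $A,B\in\U\Rightarrow A\cap B\in\U$) and of size at most $\mathfrak{c}$. A minor subtlety worth a remark is that the $\xi_j$ need not be distinct as hypernaturals even when the injectivity count $s$ is large: the condition $|\{\xi_{i_1},\ldots,\xi_{i_p}\}|\ge s$ counts distinct \emph{values} among the listed coordinates, which is exactly what transfers to distinct values $a_{i_j}$ in $\N$; this matches the definition of PR-witness verbatim and requires no extra argument. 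Everything else is a routine application of transfer and the definition of $\ueq$, so I would keep the write-up short, essentially the two paragraphs above.
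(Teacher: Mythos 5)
Your proposal is correct and follows essentially the same route as the paper: one direction by transfer of the internal statement witnessed by the $\xi_j$, the other by applying $\mathfrak{c}^+$-saturation (enlargement) to the family $\{{}^*\Gamma(A)\mid A\in\U\}$ of solution sets indexed by a PR-witness $\U$, which has the finite intersection property since $\Gamma(A)\cap\Gamma(B)=\Gamma(A\cap B)$. The only blemish is the stray word ``finite'' in ``for each finite $A\in\U$'', which should just read ``for each $A\in\U$''.
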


\begin{proof}
Assume first that there exist 
$\xi_1\ueq\ldots\ueq\xi_n$ with the above properties,
and consider the ultrafilter
$\mathcal{U}=\mathfrak{U}_{\xi_1}=\ldots=\mathfrak{U}_{\xi_n}$.
For every $A\in\mathcal{U}$,
the elements $\xi_i$ witness that the following is true:
$$\exists\, y_1,\ldots,y_n\in{}^*A\ \text{s.t.}\ {}^*f(y_1,\ldots,y_n)=0\ 
\&\ |\{y_{i_1},\ldots,y_{i_p}\}|\ge s.$$
By \emph{transfer}, we obtain the existence of elements 
$a_1,\ldots,a_n\in A$ such that $f(a_1,\ldots,a_n)=0$ and
$|\{a_{i_1},\ldots,a_{i_p}\}|\ge s$, thus showing that
$\U$ is
a PR-witness of $f$ with injectivity $|\{x_{i_1},\ldots,x_{i_p}\}|\ge s$.

Conversely, pick a PR-witness $\mathcal{U}$ of $f$ with
injectivity $|\{x_{i_1},\ldots,x_{i_p}\}|\ge s$.
Then for every $A\in\U$ the following 
set is nonempty:
$$\Gamma(A)\ =\ \left\{(a_1,\ldots,a_n)\in A^n\mid
f(a_1,\ldots,a_n)=0\ \&\ |\{a_{i_1},\ldots,a_{i_p}\}|\ge s\right\}.$$

Since $\Gamma(A_1)\cap\ldots\cap\Gamma(A_k)=
\Gamma(A_1\cap\ldots\cap A_k)$, the family
$\mathfrak{F}=\{\Gamma(A)\mid A\in\U\}$ has the
\emph{finite intersection property}, and hence, 
by $\mathfrak{c}^+$-\emph{enlargement} (which is entailed by $\mathfrak{c}^+$-saturation),
we can pick $(\xi_1,\ldots,\xi_n)\in\bigcap_{A\in\U}{}^*\Gamma(A)$.
It is readily verified that 
$\mathfrak{U}_{\xi_1}=\ldots=\mathfrak{U}_{\xi_n}=\U$,
and that $\xi_1,\ldots,\xi_n$ satisfy the desired properties.
\end{proof}

In particular, we will use the following characterization.

\begin{cor}
A function $f(x_1,\ldots,x_n)$ is non-trivially PR
if and only if there exist infinite hypernatural numbers
$\xi_1\ueq\ldots\ueq\xi_n$ with
${}^*\!f(\xi_1,\ldots,\xi_n)=0$.
\end{cor}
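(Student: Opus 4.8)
The plan is to derive this as a direct specialization of Proposition~\ref{U-equivalence Characterization}. Recall that $f$ being non-trivially PR means, by definition, that $f$ is PR with injectivity $|\{x_1,\dots,x_n\}|\ge 2$; that is, we are in the situation of the Proposition with $p=n$, the full index set $(i_1,\dots,i_p)=(1,\dots,n)$, and $s=2$. So the Proposition immediately gives: $f$ is non-trivially PR if and only if there exist hypernatural numbers $\xi_1\ueq\cdots\ueq\xi_n$ with ${}^*f(\xi_1,\dots,\xi_n)=0$ and $|\{\xi_1,\dots,\xi_n\}|\ge 2$. The only gap between this and the statement of the Corollary is the replacement of the condition ``$|\{\xi_1,\dots,\xi_n\}|\ge 2$'' by the condition ``all $\xi_i$ are infinite''. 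So the real content is to show these two conditions are interchangeable \emph{given} that the $\xi_i$ are $u$-equivalent.

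First I would handle the easy direction. If all the $\xi_i$ are infinite, then in particular at least one of them — say $\xi_1$ — is infinite, hence $\xi_1\ne k$ for every finite $k\in\N$; but also, trivially, having all of them infinite does not by itself force $|\{\xi_1,\dots,\xi_n\}|\ge 2$, so here I need the slightly less trivial observation. Actually the cleanest route: if all $\xi_i$ are infinite and, toward the Proposition's hypothesis, I only need $|\{\xi_i\}|\ge 2$; if instead all the $\xi_i$ were equal then $f(\xi,\dots,\xi)=0$ with $\xi$ infinite, and I'd need to rule that out — but this cannot be ruled out in general, so I must be more careful. The correct logic is the reverse: I should start from a non-trivial PR witness $\xi_1\ueq\cdots\ueq\xi_n$ with $|\{\xi_1,\dots,\xi_n\}|\ge 2$ and \emph{upgrade} it so that \emph{all} coordinates are infinite. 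By property~(U1), if some $\xi_i\ueq k$ for a finite $k$ then $\xi_i=k$; since all $\xi_i$ are $u$-equivalent to each other, either they are all equal to the same finite $k$, or none of them is finite. The first case is impossible because it would force $|\{\xi_1,\dots,\xi_n\}|=1<2$. Hence all $\xi_i$ are infinite, which is exactly the conclusion. Conversely, if all $\xi_i$ are infinite, then by~(U4) — or simply because an infinite hypernatural differs from any other by a definite margin only when they are distinct — I still need $|\{\xi_i\}|\ge 2$; but in fact for the ``if'' direction of the Corollary I may invoke the ``if'' direction of the Proposition, and the hypothesis ``all $\xi_i$ infinite'' trivially implies nothing about distinctness. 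So to make the ``if'' direction work I instead observe: given infinite $\xi_1\ueq\cdots\ueq\xi_n$ with ${}^*f=0$, if they happened to be pairwise equal the solution would be trivial, but the Corollary's hypothesis only asserts existence of such a tuple, and PR with injectivity $\ge 2$ follows from Proposition~\ref{U-equivalence Characterization} provided $|\{\xi_i\}|\ge 2$ — which I can arrange by noting that if all coordinates coincide at a single infinite value $\xi$, one may perturb using~(U2)/(U3) or simply note that a genuinely non-trivial equation's constant infinite solution is ruled out by the equation's form; in the generality stated here the cleanest fix is to absorb this into the definition, so I would phrase the ``if'' direction as: infinite $u$-equivalent coordinates with a common value still count, and appeal directly to the Proposition with $s=2$ after checking distinctness is automatic for the equations of interest, or more honestly, state that the equivalence is with ``$|\{\xi_i\}|\ge 2$'' and the infinitude is the consequence via~(U1).

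The main obstacle, then, is purely bookkeeping about what ``non-trivially PR'' and the injectivity notation unpack to, together with the interplay of~(U1) (finite $u$-equivalents are equal) and the requirement $|\{\xi_1,\dots,\xi_n\}|\ge 2$. The substantive mathematical input — the saturation/enlargement argument producing the $\xi_i$, and the transfer argument going back — is already contained in Proposition~\ref{U-equivalence Characterization}, so the proof of the Corollary should be only a few lines: apply the Proposition with $p=n$, $(i_1,\dots,i_p)=(1,\dots,n)$, $s=2$; then use~(U1) to convert ``$\xi_i$ pairwise distinct in the weak sense $|\{\xi_i\}|\ge 2$'' into ``every $\xi_i$ is infinite'', since a finite coordinate would (being $u$-equivalent to all the others) force every coordinate to equal that same finite value, collapsing $|\{\xi_i\}|$ to $1$.
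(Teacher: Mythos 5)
Your argument for the substantive direction coincides exactly with the paper's proof: apply Proposition~\ref{U-equivalence Characterization} with $s=2$ and the full variable set to obtain $\xi_1\ueq\ldots\ueq\xi_n$ with ${}^*f(\xi_1,\ldots,\xi_n)=0$ and $|\{\xi_1,\ldots,\xi_n\}|\ge 2$, then note via (U1) that if some $\xi_i$ equalled a finite $k$ then every $\xi_j$ would equal $k$, collapsing the tuple to a single value. That is, word for word, the paper's proof, which in fact only writes out this one implication.

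Where your write-up falters is the converse, and you should trust the objection you raised rather than argue yourself out of it: ``all $\xi_i$ infinite'' genuinely does not imply $|\{\xi_1,\ldots,\xi_n\}|\ge 2$ (take $f(x,y)=x-y$ and $\xi_1=\xi_2$ any infinite hypernatural), so the ``if'' direction of the Corollary as literally stated does not follow from the Proposition and indeed fails without an implicit distinctness assumption. The paper's own proof is silent on this direction, and only the ``only if'' implication is ever used later (in Theorems~\ref{general necessary condition for PR} and~\ref{general non-PR result}). Your closing suggestion --- state the equivalence with the condition $|\{\xi_1,\ldots,\xi_n\}|\ge 2$ and observe that infinitude of all coordinates is then automatic by (U1) --- is the right repair; but the long oscillation in your second paragraph, which repeatedly starts, abandons, and restarts the ``if'' direction, should be replaced by that single clean remark, since as written your text contains no actual proof of that direction.
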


\begin{proof}
By the previous proposition, we can
pick elements $\xi_1\ueq\ldots\ueq\xi_n$ 
with ${}^*f(\xi_1,\ldots,\xi_n)=0$
and $|\{\xi_1,\ldots,\xi_n\}|>1$. If 
one of the $\xi_i$ equals a finite $k\in\N$, then
by property (U1) we would have $\xi_j=k$ for all $j=1,\ldots,n$,
a contradiction.
\end{proof}

As a first easy example of application of $u$-equivalence, let us
prove the following fact.\footnote
{~This basic property was first pointed out by H. Lefmann \cite{L91}
for bijective functions $f$, in the context of rings.
Indeed, Proposition \ref{compose}
also holds if one replaces $\N$ with an arbitrary
ring $R$ (of course in this case PR means PR on $R$).}

\begin{prop}\label{compose}
Let $f:\N^n\to\N$ and let $\varphi:\N\to\N$.
If $f(\varphi(x_1),\ldots,\varphi(x_n))$ is PR then
$f(x_1,\ldots,x_n)$ is PR.
Moreover, if $\varphi$ is onto, then one has
the equivalence: $f(x_1,\ldots,x_n)$ is PR $\Leftrightarrow$
$f(\varphi(x_1),\ldots,\varphi(x_n))$ is PR.
\end{prop}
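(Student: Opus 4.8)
The plan is to use the $u$-equivalence characterization of partition regularity (Proposition \ref{U-equivalence Characterization}, or rather its corollary and the full version) together with the transfer principle applied to the function $\varphi$. The key observation is that $u$-equivalence is preserved under hyperextensions of standard functions: this is exactly property (U2). So if $f(x_1,\ldots,x_n)$ is PR, we want to produce a monochromatic solution of $f(\varphi(x_1),\ldots,\varphi(x_n))=0$, and conversely; the natural bridge is that $\xi \ueq \zeta$ implies ${}^*\varphi(\xi) \ueq {}^*\varphi(\zeta)$.

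First I would prove the implication ``$f(\varphi(x_1),\ldots,\varphi(x_n))$ PR $\Rightarrow$ $f(x_1,\ldots,x_n)$ PR''. Suppose $f(\varphi(x_1),\ldots,\varphi(x_n))$ is PR. By Proposition \ref{Ultrafilters Characterization} (or the $u$-equivalence characterization), there exist $\xi_1 \ueq \cdots \ueq \xi_n$ in $\hN$ with ${}^*f({}^*\varphi(\xi_1),\ldots,{}^*\varphi(\xi_n))=0$. Set $\zeta_i = {}^*\varphi(\xi_i)$. By (U2), $\zeta_1 \ueq \cdots \ueq \zeta_n$, and ${}^*f(\zeta_1,\ldots,\zeta_n)=0$ by construction. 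Hence, again by the characterization, $f(x_1,\ldots,x_n)$ is PR. (One subtlety: if $n=1$ and we need some injectivity, the reduction of injectivity is harmless for plain PR; here the statement only claims plain PR, so no injectivity bookkeeping is needed.)

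For the converse under the hypothesis that $\varphi$ is onto: suppose $f(x_1,\ldots,x_n)$ is PR. Given a finite coloring $\N = C_1 \cup \cdots \cup C_r$, pull it back along $\varphi$ to the coloring $\N = D_1 \cup \cdots \cup D_r$ where $D_j = \varphi^{-1}(C_j) = \{n \mid \varphi(n) \in C_j\}$. By PR of $f$, there is a color class $C_j$ and elements $b_1,\ldots,b_n \in C_j$ with $f(b_1,\ldots,b_n)=0$. Here is where I need surjectivity: since $\varphi$ is onto, for each $i$ pick $a_i \in \N$ with $\varphi(a_i) = b_i$; then $a_i \in D_j$ for all $i$, the $a_i$ are monochromatic in the pulled-back coloring, and $f(\varphi(a_1),\ldots,\varphi(a_n)) = f(b_1,\ldots,b_n)=0$. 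So $f(\varphi(x_1),\ldots,\varphi(x_n))$ is PR. Combined with the first implication, this gives the equivalence.

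The main obstacle, such as it is, is essentially bookkeeping rather than a genuine difficulty: one must be careful that the correct direction of the implication matches the correct hypothesis (the ``harder'' direction, producing a solution of the composed equation, is the one needing surjectivity, while the ``easier'' direction holds unconditionally), and one must correctly use (U2) to transport $u$-equivalence through ${}^*\varphi$. I would present the proof via the ultrafilter/$u$-equivalence characterization for the unconditional direction (since it is clean and illustrates the technique of the section, matching the ``first easy example'' framing), and via a direct coloring argument for the surjective converse. Alternatively, both directions can be done purely with colorings; I would choose whichever is shorter, but the $u$-equivalence route for the first implication showcases property (U2) nicely.
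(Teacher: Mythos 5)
Your first implication is correct and is essentially the paper's own argument: pick $\xi_1\ueq\ldots\ueq\xi_n$ with ${}^*f({}^*\varphi(\xi_1),\ldots,{}^*\varphi(\xi_n))=0$, set $\eta_i={}^*\varphi(\xi_i)$, and use (U2).

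The converse, however, has a genuine gap. To prove that $f(\varphi(x_1),\ldots,\varphi(x_n))$ is PR you must exhibit, for an \emph{arbitrary} coloring $\N=C_1\cup\ldots\cup C_r$, elements $a_1,\ldots,a_n$ lying in a single $C_j$ with $f(\varphi(a_1),\ldots,\varphi(a_n))=0$. Your argument produces $a_i$ that are monochromatic for the pulled-back coloring $D_j=\varphi^{-1}(C_j)$, not for the given coloring $\{C_j\}$; these partitions differ unless each $C_j$ happens to be a union of fibers of $\varphi$ (take $\varphi(n)=\lceil n/2\rceil$ and a coloring that separates $2k-1$ from $2k$: no such coloring arises as $\{\varphi^{-1}(C_j)\}$). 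In effect you have only verified PR of the composed function for colorings of the special form $\chi\circ\varphi$. The repair is to pull back along a right inverse rather than along $\varphi$: since $\varphi$ is onto, choose $\psi:\N\to\N$ with $\varphi\circ\psi=\mathrm{id}$, apply PR of $f$ to the coloring $B_j=\psi^{-1}(C_j)$ to obtain $b_1,\ldots,b_n\in B_j$ with $f(b_1,\ldots,b_n)=0$, and set $a_i=\psi(b_i)$; then $a_1,\ldots,a_n\in C_j$ and $\varphi(a_i)=b_i$, so $f(\varphi(a_1),\ldots,\varphi(a_n))=0$. This is exactly the paper's proof in finitary dress: there one takes $\eta_i={}^*\psi(\xi_i)$, gets $\eta_1\ueq\ldots\ueq\eta_n$ from (U2), and observes ${}^*\varphi(\eta_i)=\xi_i$.
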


\begin{proof}
Pick $\xi_1\ueq\ldots\ueq\xi_n$ such that
$\hf({}^*\varphi(\xi_1),\ldots,{}^*\varphi(\xi_n))=0$, and let $\eta_i={}^*\varphi(\xi)_i$.
Then $\eta_1\ueq\ldots\ueq\eta_n$ 
and trivially $\hf(\eta_1,\ldots,\eta_n)=0$.
If $\varphi$ is onto, pick $\psi:\N\to\N$ such that
$\varphi\circ\psi$ is the identity, and consider $\eta_i={}^*\psi(\xi_i)$.
Then $\eta_1\ueq\ldots\ueq\eta_n$ are such that
$\hf({}^*\varphi(\eta_1),\ldots,{}^*\varphi(\eta_n))=\hf(\xi_1,\ldots,\xi_n)=0.$
\end{proof}

When dealing with polynomials in several variables,
it is convenient to use the multi-index notation.
Let us fix the terminology.

\begin{itemize}
\item
An $n$-dimensional \emph{multi-index}
is an $n$-tuple $\alpha=(\alpha_1,\ldots,\alpha_n)\in\N_0^n$\,;
\item
$\alpha\le\beta$ means that $\alpha_i\le\beta_i$ for all $i=1,\ldots,n$\,;
\item
$\alpha<\beta$ means that $\alpha\le\beta$ and $\alpha\ne\beta$\,;
\item
If $\x=(x_1,\ldots,x_n)$ is vector
and $\alpha=(\alpha_1,\ldots,\alpha_n)$ is a multi-index, the product
$\prod_{i=1}^n x_i^{\alpha_i}$
is denoted by $\x^\alpha$\,;
\item
The \emph{length} of a multi-index $\alpha=(\alpha_1,\ldots,\alpha_n)$ 
is $|\alpha|=\sum_{i=1}^n\alpha_i$\,;
\item
A set $I$ of $n$-dimensional multi-indexes having all
the same length is called
\emph{homogeneous}\,;
\item
Polynomials $P\in\Z[x_1,\ldots,x_n]$ are written
in the form $P(\x)=\sum_\alpha c_\alpha\x^\alpha$
where $\alpha$ are multi-indexes\,;
\item
The \emph{support} of $P$ is the finite set
$\text{supp}(P)=\{\alpha\mid c_\alpha\ne 0\}$\,;
\item
A polynomial $P(\x)=\sum_\alpha c_\alpha\x^\alpha$
is \emph{homogeneous} if $\text{supp}(P)$ is a
homogeneous set of indexes.
\end{itemize}

\begin{defn}
{\rm Let $P(\x)=\sum_\alpha c_\alpha\x^\alpha\in\Z[x_1,\ldots,x_n]$.
We say that a multi-index $\alpha\in\text{supp}(P)$ is \emph{minimal} 
if there are no $\beta\in\text{supp}(P)$ with $\beta<\alpha$.
The notion of \emph{maximal} multi-index is defined similarly.

A nonempty set $J\subseteq\text{supp}(P)$
is called a \emph{Rado set of indexes}
if for every $\alpha,\beta\in J$
there exists a nonempty $\Lambda\subseteq\{1,\ldots,n\}$
with $\sum_{i\in\Lambda}\alpha_i=\sum_{i\in\Lambda}\beta_i$.}
\end{defn}

Notice that every singleton $\{\alpha\}\subseteq\text{supp}(P)$
is trivially a Rado set. 
When $P(x_1,\ldots,x_n)=c_1x_1+\ldots+c_n x_n$
is a linear polynomial with no constant term, then
we can write $P=\sum_{s=1}^n c_s\x^{\alpha(s)}$
where $\alpha(s)$ is the multi-index where 
the $s$-th entry is $1$, and all other entries are $0$. In this
case, every nonempty
$J\subseteq\text{Supp}(P)=\{\alpha(1),\ldots,\alpha(n)\}$
is a Rado set of both minimal and maximal indexes.

\begin{thm}\label{general necessary condition for PR}
Let $P(\x)=\sum_\alpha c_\alpha\x^\alpha\in\Z[x_1,\ldots,x_n]$
be a polynomial with no constant term.
Suppose there exists a prime $p$ such that:
\begin{enumerate}
\item
$\sum_\alpha c_\alpha z^{|\alpha|}\equiv 0\mod p$ 
has no solutions $z\not\equiv 0$\,;
\item
For every Rado set $J$ of minimal indexes,
$\sum_{\alpha\in J} c_\alpha z^{|\alpha|}\equiv 0\mod p$ 
has no solutions $z\not\equiv 0$.
\end{enumerate}
Then $P(\x)$ 
is not PR, except possibly for constant solutions $x_1=\ldots=x_n$.
\end{thm}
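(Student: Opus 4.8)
The plan is to prove the contrapositive: assuming $P(\x)=\sum_\alpha c_\alpha\x^\alpha$ is partition regular with a non-constant monochromatic solution, I will produce a solution forbidden by (1) or by (2). First I would apply the nonstandard characterization of non-trivial partition regularity (the Corollary to Proposition~\ref{U-equivalence Characterization}) to fix infinite hypernaturals $\xi_1\ueq\cdots\ueq\xi_n$ with ${}^*P(\xi_1,\dots,\xi_n)=0$. Then I would extract the $p$-adic data: set $\mu_i={}^*v_p(\xi_i)$ and let $u_i$ be the hyper-extension of the map $n\mapsto n/p^{v_p(n)}$ evaluated at $\xi_i$, so that $\xi_i=p^{\mu_i}u_i$ with $p\nmid u_i$. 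Property (U2) gives $\mu_i\ueq\mu_j$ and $u_i\ueq u_j$ for all $i,j$; since each residue $u_i\bmod p$ lies in the finite set $\{1,\dots,p-1\}$, property (U1) forces these residues to coincide, say $u_i\equiv r\pmod{p}$ with $1\le r\le p-1$; and since $\mu_i\ueq\mu_j$, property (U1) also gives that either $\mu_i=0$ for all $i$ or $\mu_i\ge 1$ for all $i$.

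In the case $\mu_i=0$ for all $i$ one has $\xi_i=u_i\equiv r\pmod{p}$, and reducing $0={}^*P(\xi_1,\dots,\xi_n)=\sum_\alpha c_\alpha\prod_i\xi_i^{\alpha_i}$ modulo $p$ yields the standard congruence $\sum_\alpha c_\alpha r^{|\alpha|}\equiv 0\pmod{p}$ with $r\not\equiv 0$, contradicting (1). So assume $\mu_i\ge 1$ for all $i$, and for a multi-index $\alpha$ write $\langle\mu,\alpha\rangle=\sum_{i=1}^n\mu_i\alpha_i$. I would then set $m=\min\{\langle\mu,\alpha\rangle:\alpha\in\text{supp}(P)\}$ — attained because $\text{supp}(P)$ is finite — and $S=\{\alpha\in\text{supp}(P):\langle\mu,\alpha\rangle=m\}\ne\emptyset$. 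Each $\alpha\in S$ is a minimal index: if $\beta\in\text{supp}(P)$ had $\beta<\alpha$, then $\mu_i\ge 1$ would give $\langle\mu,\beta\rangle<\langle\mu,\alpha\rangle=m$, impossible. Using $\prod_i\xi_i^{\alpha_i}=p^{\langle\mu,\alpha\rangle}w_\alpha$ with $w_\alpha=\prod_i u_i^{\alpha_i}$ (so $p\nmid w_\alpha$ and $w_\alpha\equiv r^{|\alpha|}\pmod{p}$), the hypothesis ${}^*P(\xi_1,\dots,\xi_n)=0$ reads $\sum_\alpha c_\alpha p^{\langle\mu,\alpha\rangle}w_\alpha=0$; cancelling $p^m$ (valid in the integral domain ${}^*\Z$, as $\langle\mu,\alpha\rangle\ge m$ for every $\alpha\in\text{supp}(P)$) and reducing modulo $p$ kills every term with $\langle\mu,\alpha\rangle>m$ and leaves $\sum_{\alpha\in S}c_\alpha r^{|\alpha|}\equiv 0\pmod{p}$.

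It remains to show $S$ is a Rado set of minimal indexes, and this is the one step that needs an idea rather than a calculation. For $\alpha,\beta\in S$ we have $\langle\mu,\alpha\rangle=\langle\mu,\beta\rangle=m$, i.e. $\sum_{i=1}^n(\alpha_i-\beta_i)\mu_i=0$. Since $\mu_1\ueq\cdots\ueq\mu_n$ share a common associated ultrafilter $\mathcal V$, that ultrafilter witnesses the partition regularity of the homogeneous linear equation $\sum_{i=1}^n(\alpha_i-\beta_i)x_i=0$: given any finite coloring of $\N$, the colour class $C\in\mathcal V$ has $\mu_1,\dots,\mu_n\in{}^*C$ with $\sum_i(\alpha_i-\beta_i)\mu_i=0$, so by transfer $C$ already contains an honest monochromatic solution. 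Rado's Theorem then yields a nonempty $\Lambda\subseteq\{1,\dots,n\}$ with $\sum_{i\in\Lambda}(\alpha_i-\beta_i)=0$, that is $\sum_{i\in\Lambda}\alpha_i=\sum_{i\in\Lambda}\beta_i$, so $S$ is a Rado set, and its members are minimal by the previous paragraph. As $1\le r\le p-1$, the congruence $\sum_{\alpha\in S}c_\alpha r^{|\alpha|}\equiv 0\pmod{p}$ is then a solution $z=r\not\equiv 0$ of exactly the kind excluded by hypothesis (2) with $J=S$, the contradiction we want. The points needing care are the bookkeeping of the ``cancel $p^m$, reduce mod $p$'' step inside ${}^*\Z$ (using that a standard integer divisible by $p$ in ${}^*\Z$ is divisible by $p$ in $\Z$), and the observation that the minimizing set $S$ is automatically Rado — which turns what would otherwise be a delicate ``$\Q$-linear independence of $u$-equivalent hypernaturals'' question into a one-line appeal to Rado's own theorem.
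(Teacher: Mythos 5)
Your proposal is correct and follows essentially the same route as the paper's proof: extract the $p$-adic valuation and unit part of the $u$-equivalent hypernaturals, use (U1)/(U2) to equalize the residues, minimize $\sum_i\alpha_i\tau_i$ over the support, cancel the common power of $p$, and verify that the minimizing set is a Rado set of minimal indexes via the nonstandard PR-witness $(\tau_i)$ together with Rado's Theorem. The only cosmetic difference is that the paper writes $\xi_i=a_i+\zeta_ip^{\tau_i}$ and uses hypothesis (1) to force $a=0$, whereas you split into the cases $v_p(\xi_i)=0$ versus $v_p(\xi_i)\ge 1$; these amount to the same use of hypothesis (1).
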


\begin{proof}
By contradiction, let us suppose that the polynomial
$P(\x)$ is non-trivially PR, and pick infinite
$\xi_1\ueq\ldots\ueq\xi_n$ such that 
$$P(\xxi)\ =\ \sum_\alpha c_\alpha\xxi^\alpha=0.$$ 

Pick a prime $p$ as given by the hypothesis,
and write the numbers $\xi_i$ in the following form:
$$\xi_i\ =\ a_i+\zeta_i\, p^{\tau_i}$$
where $0\le a_i\le p-1$, where $\zeta_i$ is not divisible by $p$,
and where $\tau_i\ge 1$. Denote by $b_i\in\{1,\ldots,p-1\}$
the number such that $\zeta_i\equiv b_i\mod p$.

Let $f:\N\to\{0,1,\ldots,p-1\}$ be the function where 
$f(m)\equiv m\mod p$; let $g:\N\to\N$ be the function
where $g(m)$ is the greatest exponent of $p$ that divides $m-f(m)$;
and let $h:\N\to\{1,\ldots,p-1\}$ be the function where 
$h(m)\equiv(m-f(m))/p^{g(m)}\mod p$.
Notice that ${}^*f(\xi_i)=a_i$, ${}^*g(\xi_i)=\tau_i$
and ${}^*h(\xi_i)=b_i$. So, 
the $u$-equivalences $\xi_1\ueq\ldots\ueq\xi_n$ imply that
$a_1\ueq\ldots\ueq a_n$, $\tau_1\ueq\ldots\ueq\tau_n$,
and $\zeta_1\ueq\ldots\ueq \zeta_n$.
Since finite $u$-equivalent numbers are necessarily equal,
there exist $0\le a\le p-1$ and $1\le b\le p-1$ such that 
$a_i=a$ and $b_i=b$ for all $i$. Now,
$$0\ =\ P(\xxi)\ \equiv\ 
\sum_{\alpha}c_\alpha a^{|\alpha|}\mod p,$$
and hence, by the hypothesis (1), it must be $a=0$.
In consequence,
$$\xxi^\alpha\ =\  
\zzeta^\alpha\cdot p^{\,\sum_{i=1}^n\!\alpha_i\tau_i}$$
where $\zzeta^\alpha\equiv b^{|\alpha|}\not\equiv 0\mod p$.
Now let 
$\sigma=\min\left\{\sum_{i=1}^n\alpha_i\tau_i\mid 
\alpha\in\text{supp}(P)\right\}$, and let
$J=\{\alpha\mid \sum_{i=1}^n\alpha_i\tau_i=\sigma\}$.
We have that
$$0\ =\ \sum_\alpha c_\alpha\xxi^\alpha\ =\ 
p^\sigma\cdot\left(\sum_{\alpha\in J}c_\alpha\zzeta^\alpha+
\sum_{\beta\notin J}
c_\beta\, \zzeta^\beta\,p^{\,(\sum_{i=1}^n\!\beta_i\tau_i)-\sigma}\right).$$
Then
$$\sum_{\alpha}c_\alpha\zzeta^\alpha\ \equiv\ 
\sum_{\alpha\in J}c_\alpha b^{|\alpha|}\ \equiv\ 0\mod p.$$
This shows that the equation
$$\sum_{\alpha\in J}c_\alpha z^{|\alpha|}\ \equiv\ 0 \mod p$$
has the solution $b\not\equiv 0\mod p$.
We will reach a contradiction with hypothesis (2),
by showing that $J$ is a Rado set of minimal indexes.
Notice first that $J$ only contains minimal indexes; indeed,
if $\beta<\alpha\in J$ then $\sigma-\sum_{i=1}^n\beta_i\tau_i=
\sum_{i=1}^n(\alpha_i-\beta_i)\tau_i>0$
since all $\tau_i\ge 1$, and so $\beta\notin\text{supp}(P)$.
Let us now prove that $J$ is a Rado set. 
Take any two distinct indexes $\alpha,\beta\in J$.
(If $J$ is a singleton, the thesis is trivial.)
Then $\sum_{i=1}^n\beta_i\tau_i-
\sum_{i=1}^n\alpha_i\tau_i=\sigma-\sigma=0$.
Since $\tau_1\ueq\ldots\ueq\tau_n$,
by the nonstandard characterization,
the equation $\sum_{i=1}^n(\beta_i-\alpha_i)y_i=0$
is PR. In consequence, by Rado's theorem, there
exists a nonempty $\Lambda\subseteq\{1,\ldots,n\}$ such that 
$\sum_{i\in\Lambda}(\beta_i-\alpha_i)=0$, as desired.
\end{proof}

The range of Diophantine equations covered by 
Theorem \ref{general necessary condition for PR}
is quite large. Two easy examples are the following.

\begin{ex}
{\rm
Let $P(x_{1},x_{2},x_{3})=x_{1}^{2}x_{2}-2x_{3}$. 
Pick any prime number $p$ with $p\equiv 3$ 
or $p\equiv 5\mod 8$, so that $2$ is not a quadratic residue modulo $p$. 
Then condition (1) of Theorem \ref{general necessary condition for PR} 
is satisfied because $z^{3}-2z\equiv 0$ iff $z\equiv 0$,
and also condition (2) is easily verified.
Since it has no constant solutions
$x_1=x_2=x_3$, we can conclude that
$P(x_{1},x_{2},x_{3})$ is not PR.}
\end{ex}

\begin{ex}
{\rm Let $P(x_{1},\dots,x_{p+1},y,z)=\prod_{i=1}^{p+1}x_{i}-yz+z$, where $p$ is a prime number. Conditions (1) and (2) of Theorem \ref{general necessary condition for PR} are immediate consequences of Fermat's Little Theorem, hence $P(x_{1},\dots,x_{p+1},y,z)$ is not PR.}
\end{ex}

As a particular case of Theorem \ref{general necessary condition for PR},
we obtain a result about homogeneous equations, 
first proved by the second named author in \cite{LB12}.

\begin{cor}
Let $P(\x)=\sum_\alpha c_\alpha\x^\alpha\in\Z[x_1,\ldots,x_n]$ 
be an homogeneous polynomial.
If for every nonempty $\Gamma\subseteq \text{supp}(P)$
one has $\sum_{\alpha\in\Gamma} c_{\alpha}\neq 0$, 
then $P(\x)$ is not PR.
\end{cor}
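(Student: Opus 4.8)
The plan is to read this off from Theorem~\ref{general necessary condition for PR}: homogeneity forces conditions (1) and (2) of that theorem to collapse into the single requirement that the relevant coefficient subsums be nonzero modulo a suitable prime, and the hypothesis delivers exactly that once the prime is chosen large enough. First I would clear away a degenerate case. Let $\ell$ be the common length $|\alpha|$ of the multi-indexes $\alpha\in\text{supp}(P)$. If $\ell=0$, then $\text{supp}(P)=\{(0,\dots,0)\}$ and $P=c_0$ is a constant, which is nonzero since the hypothesis applied to $\Gamma=\text{supp}(P)$ gives $c_0\ne 0$; such a $P$ has no root in $\N^n$, hence is trivially not PR. So from now on $\ell\ge 1$, which means $P$ has no constant term and Theorem~\ref{general necessary condition for PR} applies.

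Next I would produce the prime. The set $\text{supp}(P)$ is finite, so there are only finitely many nonempty $\Gamma\subseteq\text{supp}(P)$, and for each of them the integer $s_\Gamma:=\sum_{\alpha\in\Gamma}c_\alpha$ is nonzero by hypothesis. Choose any prime $p>\max\{\,|s_\Gamma|\ :\ \emptyset\ne\Gamma\subseteq\text{supp}(P)\,\}$, so that $s_\Gamma\not\equiv 0\pmod p$ for every nonempty $\Gamma$. Since $P$ is homogeneous, $|\alpha|=\ell$ for all $\alpha\in\text{supp}(P)$, so for any nonempty $J\subseteq\text{supp}(P)$ we have $\sum_{\alpha\in J}c_\alpha z^{|\alpha|}=z^\ell\, s_J$; as $p$ is prime, $z\not\equiv 0\pmod p$ implies $z^\ell\not\equiv 0\pmod p$, whence $\sum_{\alpha\in J}c_\alpha z^{|\alpha|}\equiv z^\ell s_J\not\equiv 0\pmod p$. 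Applying this with $J=\text{supp}(P)$ verifies hypothesis (1) of Theorem~\ref{general necessary condition for PR}, and applying it with $J$ an arbitrary Rado set of minimal indexes verifies hypothesis (2). Hence the theorem gives that $P(\x)$ is not PR, except possibly for constant solutions $x_1=\dots=x_n$.

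It remains to rule out constant solutions. For $x_1=\dots=x_n=t\in\N$ one has $P(t,\dots,t)=\sum_\alpha c_\alpha t^{|\alpha|}=t^\ell\, s_{\text{supp}(P)}\ne 0$, because $t\ge 1$ and $s_{\text{supp}(P)}\ne 0$. Thus the finite coloring produced by Theorem~\ref{general necessary condition for PR} admits no monochromatic root of $P$ whatsoever, and $P(\x)$ is not PR, as claimed. I do not foresee a genuine obstacle here; the only points deserving explicit mention are the degenerate case $\ell=0$ and the verification that the ``except constant solutions'' proviso of Theorem~\ref{general necessary condition for PR} is vacuous under the present hypothesis --- both immediate.
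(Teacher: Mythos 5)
Your proof is correct and follows essentially the same route as the paper: choose a prime exceeding all the (finitely many, nonzero) coefficient subsums, use homogeneity to factor $\sum_{\alpha\in J}c_\alpha z^{|\alpha|}$ as $z^\ell\sum_{\alpha\in J}c_\alpha$, and invoke Theorem \ref{general necessary condition for PR}. Your explicit dismissal of the constant-solution proviso (and of the degenerate degree-zero case) is a small completeness point the paper leaves implicit, but it is not a different argument.
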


\begin{proof}
If $d$ is the degree of $P(\x)$, then
for every prime number $p>\sum_{\alpha}|c_{\alpha}|$,
we have that 
$\sum_{\alpha\in\Gamma}c_{\alpha}z^{|\alpha|}=
z^{d}\cdot \sum_{\alpha\in\Gamma}c_{\alpha}\equiv 0 \mod p$ 
if and only if $z\equiv 0\mod p$, and so condition (1)
of Theorem \ref{general necessary condition for PR} is satisfied.
Notice that the hypothesis directly implies that also
condition (2) holds, and so
we can conclude that $P(\x)$ is not PR. 
\end{proof}

While the above corollary provides
a necessary condition for homogeneous Diophantine
equations to be PR, let us mention
that H. Leifmann \cite[Fact 2.8]{L91} isolated
a sufficient condition for a special class of homogeneous
quadratic equations to be PR.\footnote
{~Precisely, $\sum_{i=1}^n c_i x_i^2$ is PR
if there exists a nonempty $\Lambda\subseteq\{1,\ldots,n\}$
and there exist numbers $a\in\N$ and $b_i\in\Z$ such that:
(1) $\sum_{i\in\Lambda}c_i=0$;
(2) $\sum_{i\in\Lambda}b_ic_i=0$;
(3) $\sum_{i\in\Lambda}b_i^2c_i+a^2\sum_{i\notin\Lambda}c_i=0$.}

Another necessary condition for PR 
applies when every monomial of $P(\x)$ contains
a single variable, \emph{i.e.} 
when $P$ has the form $P_1(x_1)+\ldots+P_n(x_n)$.

\begin{thm}\label{general non-PR result}
Let $P(\x)=\sum_\alpha c_\alpha\x^\alpha\in\Z[x_1,\ldots,x_n]$
be a polynomial with no constant term and where every
monomial contains a single variable. 
If $P(\x)$ is non-trivially PR 
then $\sum_{\alpha\in I}c_\alpha=0$
for some nonempty maximal homogeneous set of
indexes $I\subseteq\text{supp}(P)$.
\end{thm}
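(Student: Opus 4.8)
The plan is to argue entirely in the nonstandard setting. By Proposition~\ref{U-equivalence Characterization} (and its corollary) it suffices to assume that there are infinite, mutually $u$-equivalent $\xi_1\ueq\cdots\ueq\xi_n$ with ${}^*P(\xi_1,\dots,\xi_n)=0$ and not all $\xi_i$ equal, and to produce a nonempty maximal homogeneous $I\subseteq\text{supp}(P)$ with $\sum_{\alpha\in I}c_\alpha=0$. Since every monomial of $P$ contains a single variable, write $P=P_1(x_1)+\cdots+P_n(x_n)$ with each $P_j$ single-variable and constant-free. If some $P_j\equiv 0$ then $x_j$ does not occur in $P$; the $\xi_i$ with $P_i\not\equiv 0$ are still mutually $u$-equivalent, and if they all coincide with a common infinite $\eta$ then $\sum_{i}P_i$ has the infinite root $\eta$, hence $\sum_i P_i\equiv 0$, so its top-degree coefficient (a sum of leading coefficients $\mathrm{lc}(P_i)$ over the variables of maximal degree) vanishes and we are done. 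So I may assume all $P_j\neq 0$ and the $\xi_i$ genuinely non-constant. Put $\ell_j=\deg P_j$, $c_j=\mathrm{lc}(P_j)\neq 0$; since the $\xi_j$ are infinite the lower-order part of each $P_j$ is negligible, ${}^*P_j(\xi_j)=c_j\xi_j^{\ell_j}(1+\epsilon_j)$ with $\epsilon_j$ infinitesimal, so that $\sum_j c_j\xi_j^{\ell_j}(1+\epsilon_j)=0$.

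Next I would isolate the \emph{top scale}. Let $N=\max_j|{}^*P_j(\xi_j)|$ and $J=\{j:\ |{}^*P_j(\xi_j)|/N\ \text{is non-infinitesimal}\}$; then $J\neq\emptyset$ and $w:=\sum_{j\in J}c_j\xi_j^{\ell_j}$ differs from $0$ by an infinitesimal multiple of $N$, because the remaining terms are each infinitesimal relative to $N$. The key step is to show that all $j\in J$ share the same degree. Fix a prime $p$ and let $g(m)=\lfloor\log_p m\rfloor$, so $L_j:={}^*g(\xi_j)$ satisfies $p^{L_j}\le\xi_j<p^{L_j+1}$ and hence $\xi_j^{\ell_j}\in[p^{\ell_j L_j},p^{\ell_j L_j}\cdot p^{\ell_j})$. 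For $j,j'\in J$ one has $\xi_j^{\ell_j}\asymp\xi_{j'}^{\ell_{j'}}$, whence $p^{\ell_j L_j-\ell_{j'}L_{j'}}$ is finite and non-infinitesimal, i.e.\ $\ell_j L_j-\ell_{j'}L_{j'}=c$ for a standard integer $c$. Now $L_j\ueq L_{j'}$ by (U2); assuming $\ell_j\neq\ell_{j'}$ and writing $\mu=\ell_{j'}L_{j'}$ we have $L_{j'}={}^*h(\mu)$ with $h(m)=\lfloor m/\ell_{j'}\rfloor$, so ${}^*(m\mapsto \ell_j h(m)-c)(\mu)=\ell_j L_{j'}-c$, which by (U2) is $u$-equivalent to $\ell_j L_j-c=\mu$; then (U3) forces $\ell_j L_{j'}-c=\mu=\ell_{j'}L_{j'}$, i.e.\ $(\ell_j-\ell_{j'})L_{j'}=c$, contradicting that $L_{j'}$ is infinite. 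Hence $\ell_j=\ell$ for all $j\in J$, and each $\ell e_j$ ($j\in J$) is a maximal index of $\text{supp}(P)$.

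Now the dichotomy. If the top-scale leading terms cancel exactly, i.e.\ $w=\sum_{j\in J}c_j\xi_j^{\ell}=0$: the powers $\xi_j^{\ell}$ ($j\in J$) are mutually $u$-equivalent by (U2), so by Proposition~\ref{U-equivalence Characterization} the linear equation $\sum_{j\in J}c_j z_j=0$ is PR, and Rado's Theorem furnishes a nonempty $S\subseteq J$ with $\sum_{j\in S}c_j=0$; then $I=\{\ell e_j:\ j\in S\}$ is nonempty, homogeneous, maximal, and $\sum_{\alpha\in I}c_\alpha=0$. In particular this covers the homogeneous case, where $P_j=c_jx_j^{\ell}$ for all $j$ and $\sum_j c_j\xi_j^{\ell}=0$ automatically; it also applies whenever, after grouping variables by equal values $\xi_j$, some ``combined polynomial'' $\sum_{\xi_j=\eta}P_j$ has cancelling top-degree part, so that in the complementary situation the leading coefficient of each combined polynomial really is a sum of $\mathrm{lc}(P_j)$'s, which is exactly what lets $S$ be transferred back to the original variables.

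The remaining case — $w\neq 0$, hence $|w|\ge 1$ yet $|w|$ infinitesimal relative to $N$ — is where the real work lies, and I expect it to be the main obstacle. Here I would pass to the strictly decreasing list of distinct values $\eta_1>\cdots>\eta_k$ occurring among the $\xi_j$, rewrite the equation as $\sum_i{}^*Q_i(\eta_i)=0$ with $Q_i=\sum_{\xi_j=\eta_i}P_j$, and set up a strong induction on the number of variables (with base case $\deg P=1$, which is precisely Rado's Theorem). The point to establish is that when the dominant terms of the top scale fail to cancel exactly, the large but nearly-cancelling contributions cannot be absorbed by the strictly-smaller-scale terms unless the configuration degenerates — forcing either fewer essential variables or a strictly smaller degree — at which stage the inductive hypothesis applies. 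The delicate part is ruling out the alternative (consistent with $u$-equivalence in general, but which must be obstructed by the equation $\sum_i Q_i=0$) that a non-partition-regular linear form takes an anomalously small value at the relevant $u$-equivalent powers; the tools for this are again (U2)--(U4), the base-$p$ length function $g$ used above, and Rado's Theorem applied scale by scale.
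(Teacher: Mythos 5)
Your setup (the nonstandard reformulation via Proposition~\ref{U-equivalence Characterization}, the isolation of a dominant scale, and the (U2)/(U3) argument showing all dominant terms share a common degree $\ell$) matches the paper's proof closely, and your exact-cancellation branch is handled correctly via Rado's Theorem. But the proof is not complete: the case you defer — $w=\sum_{j\in J}c_j\xi_j^{\ell}\neq 0$ yet infinitesimal relative to the dominant scale — is the heart of the theorem, and the inductive scheme you sketch for it is not carried out and does not obviously close (the residual $w$ can be anywhere between $1$ and $o(N)$, and there is no evident reduction of "near-cancellation absorbed by lower-order and lower-scale terms" to a smaller instance of the same statement). As written, the argument proves the conclusion only under the unjustified extra hypothesis that the dominant leading terms cancel exactly.

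The missing idea, which is how the paper eliminates this case entirely, is an \emph{overspill} argument on base-$p$ digits. Since the digit functions $\psi_t$ are standard, $u$-equivalent hypernaturals have equal $t$-th digits for every standard $t$, and by overspill they agree on all digits up to some \emph{infinite} position $\nu$. Hence the dominant variables (which you have already shown to have equal length $\tau_*$) satisfy $\xi_j=\zeta_*+\eta_j$ for a common truncation $\zeta_*\geq p^{\tau_*}$ with $\eta_j<p^{\tau_*-\nu}$, so $\xi_j^{\ell}=\zeta_*^{\ell}+\vartheta_j$ with $\vartheta_j$ smaller than $p^{\ell\tau_*}$ by an infinite power of $p$. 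Therefore $w=\bigl(\sum_{j\in J}c_j\bigr)\zeta_*^{\ell}+o\bigl(p^{\ell\tau_*}\bigr)$, and since $w=o\bigl(p^{\ell\tau_*}\bigr)$ as well, the integer $\sum_{j\in J}c_j$ must vanish — no dichotomy on whether $w=0$ is needed. I would suggest replacing your final paragraph with this digit-agreement estimate; everything else in your proposal then assembles into essentially the paper's proof.
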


\begin{proof}
By the hypothesis, for every $\alpha\in\text{supp}(P)$
there exists a unique $i$ such that $\alpha_i\ne 0$.
If we take such an $i$ and let $s=\alpha_i=|\alpha|$,
then we can write $c_{i,s}x_i^s$ 
in place of $c_\alpha\x^\alpha$ with no ambiguity.

Let $d$ be the greatest degree
and $k$ the least degree of a monomial of $P$.
If $\Gamma(s)=\{i\mid \exists\alpha\ \alpha_i=s\}$ and
$\Lambda(i)=\{\alpha_i\mid \alpha_i\ne 0\}$ then
$$P(\x)\ =\ \sum_{s=k}^d\sum_{|\alpha|=s}c_\alpha\x^\alpha\ =\ 
\sum_{s=k}^d\sum_{i\in\Gamma(s)}c_{i,s}\,x_i^s\ =\ 
\sum_{i=1}^n \sum_{s\in\Lambda(i)}c_{i,s}x_i^s.$$

By the nonstandard characterization of non-trivial PR,
we can pick infinite $\xi_1\ueq\ldots\ueq\xi_n$ such that 
$P(\xxi)=0$. Now fix any finite number $p\ge 2$,
and write the numbers $\xi_i$ in base $p$:
$$\xi_i\ =\ \sum_{t=0}^{\tau_i}a_{i,t}\,p^{\tau_i-t}$$
where $0\le a_{i,t}\le p-1$ and $a_{i,0}\ne 0$.
In particular, $p^{\tau_i}\le \xi_i<p^{\tau_i+1}$.

Let $s_*\tau_*=\max\{s\,\tau_i\mid k\le s\le d,\, i\in\Gamma(s)\}$,
let $I_*=\{i\in\Gamma(s_*)\mid \tau_i=\tau_*\}$,
and decompose $P(\xxi)=\Theta+\Psi+\Phi$, where:
\begin{itemize}
\item
$\Theta=\sum_{i\in I_*}c_{i,s_*}\,\xi_i^{s_*}$\,;
\item
$\Psi=\sum_{i\in\Gamma(s_*)\setminus I_*}c_{i,s_*}\,\xi_i^{s_*}$\,;
\item
$\Phi=\sum_{s\ne s_*}\sum_{i\in\Gamma(s)}c_{i,s}\,\xi_i^{s}$.
\end{itemize}

In the sequel, for numbers $\xi,\xi'\in\hN$,
we will write $\xi\lll\xi'$ to mean that $\xi'-\xi$ is infinite.

\smallskip
\begin{lem}\label{Tecnico}
\

\begin{enumerate}
\item
$\Theta=\left(\sum_{i\in I_*}c_{i,s_*}\right)\zeta+\Theta'$
where $\zeta\ge p^{s_*\tau_*}$ and $|\Theta'|\lll p^{s_*\tau_*}$.
\item
$|\Psi|\lll p^{s_*\tau_*}$.
\item
$|\Phi|\lll p^{s_*\tau_*}$.
\end{enumerate}
\end{lem}

Since $P(\xxi)=\Theta+\Psi+\Phi=0$, the above inequalities
imply that the sum of coefficients $\sum_{i\in I_*}c_{i,s_*}=0$.
We reach the thesis by noticing that 
$I=\{\alpha\in\text{Supp}(P)\mid
\exists i\  \tau_i=\tau_* \ \&\ \alpha_i=s_*\}$ is 
a nonempty homogeneous set of maximal indexes,
and that $\sum_{\alpha\in I}c_\alpha=\sum_{i\in I_*}c_{i,s_*}$.

\smallskip
We are left to prove the Lemma; let us start with some preparatory work.

Let $\varphi:\N\to\N_0$ be the function where 
$p^{\varphi(m)}\le m<p^{\varphi(m)+1}$;
and for every $t\in\N_0$, let $\psi_t(m):\N\to\{0,1,\ldots,p-1\}$
be the function where $\psi_t(m)$ is the $(t+1)$-th digit from the left
when $m$ is written in base $p$. 
Then ${}^*\varphi(\xi_i)=\tau_i$
and  ${}^*\psi_t(\xi_i)=a_{i,t}$, and the 
$u$-equivalences $\xi_1\ueq\ldots\ueq\xi_n$ imply that
$\tau_1\ueq\ldots\ueq\tau_n$ and $a_{1,t}\ueq\ldots\ueq a_{n,t}$.
Since finite $u$-equivalent numbers are necessarily equal,
it is $a_{1,t}=\ldots=a_{n,t}$. 
Then, by \emph{overspill}, there exists an infinite $\nu\in\hN$
and numbers $b_t\in\{0,\ldots,p-1\}$
for $t\le\nu$ such that
$a_{i,t}=b_t$ for every $i=1,\ldots,n$ and
for every $t\le\nu$.
Let us denote by 
$$\zeta_i\ =\ 
\sum_{t=0}^\nu b_t\, p^{\tau_i-t}.$$

We will use the following decomposition:

\begin{itemize}
\item
\emph{For every $a\in\N$ one has
$\xi_i^a=\zeta_i^a+\vartheta_{i,a}$
where 
$p^{a\tau_i}\le\zeta_i^a\le\xi_i^a<p^{a\tau_i+a}$ and
$\vartheta_{i,a}\lll p^{a\tau_i}$.}
\end{itemize}

Since $p^{\tau_i}\le\zeta_i\le\xi_i<p^{\tau_i+1}$,
it directly follows that
$p^{a\tau_i}\le \zeta_i^a\le\xi_i^a<p^{a\tau_i+a}$;
besides, the difference
$\eta_i=\xi_i-\zeta_i=
\sum_{t=\nu+1}^{\tau_i}a_{i,t} p^{\tau_i-t}<p^{\tau_i-\nu}$.
Now, $\xi_i^a=(\zeta_i+\eta_i)^a=\zeta_i^a+\vartheta_{i,a}$ where
$\vartheta_{i,a}=\sum_{j=1}^a\binom{a}{j}\zeta_i^{a-j}\eta_i^j$.
Pick a large enough 
$\ell_i\in\N$ so that $\binom{a}{j}<p^{\ell_i}$ for all $j$.
Then
\begin{multline*}
\vartheta_{i,a}\ <\ 
p^{\ell_i}\sum_{j=1}^a\zeta_i^{a-j}\eta_i^j\ <\ 
p^{\ell_i}\sum_{j=1}^{a}
\left(p^{\tau_i+1}\right)^{a-j}\cdot\left(p^{\tau_i-\nu}\right)^j\ =
\\
=\ p^{\ell_i}\sum_{j=1}^a p^{a\tau_i+a-j(\nu+1)}\ <\ 
p^{2\ell_i}\, p^{a\tau_i+a-\nu-1}\ =\ 
p^{a\tau_i-(\nu-2\ell_i-a+1)}\ \lll\ p^{a\tau_i}.
\end{multline*}
Indeed, since $\nu$ is infinite, also 
$\nu-2\ell_i-a+1$ is infinite.

We can now prove points (1), (2) and (3) of Lemma \ref{Tecnico}.

\smallskip
1. With the notation introduced above,
$$\Theta\ =\ \sum_{i\in I_*}c_{i,s_*}\zeta_i^{s_*}+
\sum_{i\in I_*}c_{i,s_*}\vartheta_{i,s_*}.$$

For every $i\in I_*$, by the above estimates we know that
$\zeta_i=\sum_{t=0}^\nu b_t p^{\tau_*-t}=\zeta_*\ge p^{\tau_*}$
and $\vartheta_{i,s_*}\lll p^{s_*\tau_i}=p^{s_*\tau_*}$.
Then $\sum_{i\in I_*}c_{i,s_*}\zeta_i^{s_*}=
\left(\sum_{i\in I_*}c_{i,s_*}\right)\zeta$ where 
$\zeta=\zeta_*^{s_*}\ge p^{s_*\tau_*}$, and
$|\Theta'|=|\sum_{i\in I_*}c_{i,s_*}\vartheta_{i,s_*}|\le
\sum_{i\in I_*}|c_{i,s_*}|\,\vartheta_{i,s_*}\lll p^{s_*\tau_*}$.

\smallskip
2. If $i\in\Gamma(s_*)\setminus I_*$ then $s_*\tau_i<s_*\tau_*$,
and since $s_*\tau_i\ueq s_*\tau_*$, it follows that $s_*\tau_i\lll s_*\tau_*$.
Then
$$|\Psi|\ \le\ \sum_{i\in\Gamma(S_*)\setminus I_*}|c_{i,s_*}|\,\xi_i^{s_*}\ \le\ 
\sum_{i\in\Gamma(S_*)\setminus I_*}|c_{i,s_*}|\,p^{s_*\tau_i}\ \lll\ 
p^{s_*\tau_*}.$$

\smallskip
3. Let us show that for every $s\ne s_*$ and
for every $i\in\Gamma(s)$, one has $s\tau_i\lll s_*\tau_*$.
By the definition of $s_*\tau_*$, clearly $s\tau_i\le s_*\tau_*$.
If by contradiction $s_*\tau_*-s\tau_i=h\in\Z$,
then we would have ${}^*f(\tau_i)=\tau_*$ where $f:\N\to\N$ is the
function $f(m)=\lfloor(sm+h)/s_*\rfloor$.\footnote
{~ By $\lfloor\,x\,\rfloor$ we denote the integer part of $x$.}
Since $\tau_*\ueq\tau_i$, it would follow
that ${}^*f(\tau_i)=\tau_i$, and hence $(s_*-s)\tau_i=h$.
But $\tau_i$ is infinite while $h\in\Z$,
and so we must conclude that $s_*=s$, against our hypothesis.
The thesis is directly obtained by the following inequalities:
$$|\Phi|\ \le\ \sum_{s\ne s_*}\sum_{i\in\Gamma(s)}|c_{i,s}|\,\xi_i^{s}\ \le\ 
\sum_{s\ne s_*}\sum_{i\in\Gamma(s)}|c_{i,s}|\,p^{s\tau_i}\ \lll\
p^{s_*\tau_*}.\qedhere$$
\end{proof}

As a straight consequence of the Theorem \ref{general non-PR result} we obtain the following generalization of (one implication in) Rado's Theorem.

\begin{cor}\label{carino}
If the Diophantine equation
$$\sum_{i=1}^{n} c_{i}x_{i}^{d_{i}}=0$$
is PR then the following
``Rado's condition'' is satisfied:
\begin{itemize}
\item
``There exists a nonempty set $J\subseteq\{1,\dots,n\}$ such that $\forall i,j\in J \ d_{i}=d_{j}$ and
$\sum_{j\in J} c_{j}=0$.''
\end{itemize}
\end{cor}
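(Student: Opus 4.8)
The plan is to specialize the conclusion of Theorem~\ref{general non-PR result} to the polynomial $P(\x)=\sum_{i=1}^{n}c_{i}x_{i}^{d_{i}}$ and unwind the multi-index notation. I would first make the routine normalizations: as is customary we take all $c_i\ne 0$ (dummy terms are not part of the equation) and all $d_i\ge 1$ (so that $P$ has no constant term), and I would read the hypothesis ``$\sum c_i x_i^{d_i}=0$ is PR'' as \emph{non-trivial} PR, i.e.\ PR not counting the constant solutions $x_1=\dots=x_n$, consistently with the statement of Theorem~\ref{general non-PR result} and the convention adopted elsewhere in the paper. (This reading is genuinely needed: $x_1=x_2^2$ is PR in the literal sense via the solution $x_1=x_2=1$, yet it violates Rado's condition.) After these reductions $P$ is a polynomial with no constant term in which every monomial contains a single variable, so Theorem~\ref{general non-PR result} applies.

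The next step is to describe $\text{supp}(P)$ explicitly. For each $i$ let $\alpha(i)$ be the multi-index whose $i$-th entry equals $d_i$ and whose other entries are $0$; since the variables $x_1,\dots,x_n$ are distinct and the $c_i$ are nonzero, $\text{supp}(P)=\{\alpha(1),\dots,\alpha(n)\}$ consists of $n$ distinct indices with $|\alpha(i)|=d_i$ and $c_{\alpha(i)}=c_i$. The key elementary observation is that \emph{every} index of $\text{supp}(P)$ is maximal: if one had $\alpha(i)<\alpha(j)$ with $i\ne j$, then the $i$-th coordinate of $\alpha(i)$ is $d_i\ge 1$ while that of $\alpha(j)$ is $0$, which is incompatible with $\alpha(i)\le\alpha(j)$. (In fact each $\alpha(i)$ is both minimal and maximal, exactly as remarked after the definition of Rado set of indexes in the linear case.) Hence a homogeneous set of maximal indices $I\subseteq\text{supp}(P)$ is nothing but a homogeneous subset of $\text{supp}(P)$, that is, a set of the form $\{\alpha(i):i\in J\}$ with $\emptyset\ne J\subseteq\{1,\dots,n\}$ on which the map $i\mapsto d_i$ is constant.

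Finally I would apply Theorem~\ref{general non-PR result}: since $P$ is non-trivially PR, there is a nonempty homogeneous set of maximal indices $I\subseteq\text{supp}(P)$ with $\sum_{\alpha\in I}c_\alpha=0$. Writing $J=\{i:\alpha(i)\in I\}$, homogeneity of $I$ gives $d_i=|\alpha(i)|=|\alpha(j)|=d_j$ for all $i,j\in J$, while $\sum_{j\in J}c_j=\sum_{\alpha\in I}c_\alpha=0$; this is precisely the stated ``Rado's condition''. I do not expect any real obstacle: the entire content is carried by Theorem~\ref{general non-PR result}, and the work here is purely bookkeeping --- matching the multi-index formulation to the displayed equation and observing that, because each monomial contains a single variable, every support index is maximal, so that the theorem's conclusion collapses to exactly the form we want.
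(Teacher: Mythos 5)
Your proposal is correct and follows exactly the paper's intended route: the paper states Corollary~\ref{carino} as a ``straight consequence'' of Theorem~\ref{general non-PR result} without further detail, and your unwinding of the multi-index bookkeeping (every support index is maximal since each monomial involves a single variable, so a homogeneous set of maximal indices is just a subset $J$ on which $i\mapsto d_i$ is constant) supplies precisely the omitted verification. The remark about reading PR as non-trivial PR is a sensible and needed alignment with the hypotheses of Theorem~\ref{general non-PR result}.
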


Another simple consequence of Theorem \ref{general non-PR result} is the following.

\begin{cor}
Let us consider a Diophantine equation of the form
$$\sum_{i=1}^{n} c_{i}x_{i}^{k}\ =\ P(y)$$
where $P(y)$ is a polynomial with no constant term
of degree $d\ne k$. If
for every nonempty set $\Gamma\subseteq\{1,\dots,n\}$ 
one has $\sum_{i\in\Gamma}c_{i}\neq 0$, then the above
equation is not PR.
\end{cor}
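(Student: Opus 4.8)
The plan is to reduce the statement to Theorem \ref{general non-PR result}. Write $P(y)=\sum_{j=1}^{d}b_{j}y^{j}$ with $b_{d}\neq 0$ (so $d=\deg P$, and $P$ having no constant term means every $j\ge 1$), and put
$$Q(x_{1},\ldots,x_{n},y)\ =\ \sum_{i=1}^{n}c_{i}x_{i}^{k}\ -\ P(y)\ \in\ \Z[x_{1},\ldots,x_{n},y],$$
so that the equation in question is $Q=0$. Since $P$ has no constant term, neither does $Q$, and every monomial of $Q$ contains a single variable: those coming from $\sum_{i}c_{i}x_{i}^{k}$ involve the respective $x_{i}$ only, and those coming from $-P(y)$ involve $y$ only. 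Thus $Q$ meets the hypotheses of Theorem \ref{general non-PR result}. I would also record that applying the assumption ``$\sum_{i\in\Gamma}c_{i}\neq 0$ for every nonempty $\Gamma$'' to the singletons $\Gamma=\{i\}$ gives $c_{i}\neq 0$ for all $i$, so that each of the $n$ monomials $c_{i}x_{i}^{k}$ genuinely belongs to $\text{supp}(Q)$.

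Next I would pin down the maximal multi-indices of $\text{supp}(Q)$. Let $e_{i}\in\N_{0}^{n+1}$ be the multi-index with a $1$ in position $i$ and $0$ elsewhere. Then $\text{supp}(Q)=\{k\,e_{i}\mid 1\le i\le n\}\cup\{j\,e_{n+1}\mid 1\le j\le d,\ b_{j}\neq 0\}$, where $k\,e_{i}$ has length $k$ and $j\,e_{n+1}$ has length $j$. The indices $k\,e_{1},\ldots,k\,e_{n}$ are pairwise incomparable, and each is incomparable with every $j\,e_{n+1}$, since their nonzero entries sit in disjoint coordinates; hence all of them are maximal in $\text{supp}(Q)$. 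Among the $j\,e_{n+1}$ the order $\le$ coincides with the order of the exponents $j$, so the unique maximal one is $d\,e_{n+1}$. Therefore the maximal indices of $Q$ are exactly $\{k\,e_{1},\ldots,k\,e_{n},\ d\,e_{n+1}\}$, the first $n$ of common length $k$ and the last of length $d$.

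Now I would apply Theorem \ref{general non-PR result}. If $Q$ were non-trivially PR, there would be a nonempty \emph{homogeneous} set $I$ of maximal indices of $Q$ with $\sum_{\alpha\in I}c_{\alpha}=0$, where $c_{\alpha}$ denotes the coefficient of $\alpha$ in $Q$. Since $d\neq k$, a homogeneous $I$ cannot contain both some $k\,e_{i}$ and $d\,e_{n+1}$; so either $I=\{d\,e_{n+1}\}$, in which case $\sum_{\alpha\in I}c_{\alpha}=-b_{d}\neq 0$, or $I=\{k\,e_{i}\mid i\in\Gamma\}$ for some nonempty $\Gamma\subseteq\{1,\ldots,n\}$, in which case $\sum_{\alpha\in I}c_{\alpha}=\sum_{i\in\Gamma}c_{i}\neq 0$ by hypothesis. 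Both options contradict $\sum_{\alpha\in I}c_{\alpha}=0$, so $Q$ is not non-trivially PR. Finally I would argue that the equation is not PR: in a suitable finite coloring any monochromatic solution must be constant, $x_{1}=\cdots=x_{n}=y=m$, forcing $m^{k}\bigl(\sum_{i}c_{i}\bigr)=P(m)$, an identity between two polynomials in $m$ of different degrees (as $\sum_{i}c_{i}\neq 0$ and $d\neq k$) and hence valid for only finitely many $m$; disregarding these constant solutions, as done elsewhere in this section, yields ``not PR''.

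I do not expect any real obstacle once Theorem \ref{general non-PR result} is in hand --- the content is entirely the bookkeeping of $\text{supp}(Q)$ carried out above. The single step that genuinely invokes the hypotheses is the final dichotomy: it is the assumption $d\neq k$ that prevents merging the monomials in $y$ with those in the $x_{i}$ into one homogeneous family of maximal indices --- which is exactly what could otherwise let $b_{d}$ cancel against a subsum of the $c_{i}$ --- while the Rado-type condition $\sum_{i\in\Gamma}c_{i}\neq 0$ disposes of the remaining possibility $I\subseteq\{k\,e_{1},\ldots,k\,e_{n}\}$.
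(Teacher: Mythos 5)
Your proof is correct and takes exactly the route the paper intends: the corollary is stated there as a ``simple consequence'' of Theorem \ref{general non-PR result} with the verification omitted, and your bookkeeping of the maximal indexes of $\text{supp}(Q)$ --- using $d\neq k$ to split any homogeneous set of maximal indexes into $\{d\,e_{n+1}\}$ or a subset of $\{k\,e_{1},\ldots,k\,e_{n}\}$, then invoking $b_d\neq 0$ and the hypothesis on the $c_i$ --- is precisely that verification. Your closing remark on disregarding the finitely many constant solutions is also consistent with the convention used elsewhere in that section (\emph{e.g.} for $x+y=z^2$).
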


Finally, by combining Theorems \ref{Generalized Rado} and
\ref{general non-PR result} one obtains an
extension of Rado's Theorem to a large family of 
nonlinear polynomials.

\begin{cor}\label{characterization}
Let $n\ge 3$. A polynomial of the form
$$Q(x_{1},\dots,x_{n},y):=c_{1}x_{1}+\dots+c_{n}x_{n}+P(y)$$
where $P$ is nonlinear is non-trivially PR if and only if
there exists a nonempty subset $J\subseteq\{1,\dots,n\}$ such that
$\sum_{j\in J} c_{j}=0$.
\end{cor}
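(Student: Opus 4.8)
The plan is to prove Corollary \ref{characterization} by combining the two main results of the paper: the sufficient condition of Theorem \ref{Generalized Rado} for the ``if'' direction, and the necessary condition of Theorem \ref{general non-PR result} for the ``only if'' direction. Both directions should be essentially immediate applications.

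For the ``if'' direction, suppose $J\subseteq\{1,\dots,n\}$ is nonempty with $\sum_{j\in J}c_j=0$. Then $Q(x_1,\dots,x_n,y)=c_1x_1+\dots+c_nx_n+P(y)$ is a Rado polynomial in the sense of the Definition preceding Theorem \ref{Generalized Rado}: it has $n\ge 3\ge 2$ linear variables, $P$ has no constant term (being a Diophantine equation, which I read as having zero constant term; if one wants to be careful one absorbs a constant term of $P$ into the linear part or notes it is excluded), and the required subset $J$ exists. Hence by Theorem \ref{Generalized Rado} every ultrafilter $\U\in\overline{K(\odot)}\cap\overline{\Idem(\oplus)}\cap\BD$ is a PR-witness of $Q$ with injectivity $|\{x_1,\dots,x_n\}|\ge n-1\ge 2$. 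In particular $Q$ is PR with injectivity $|\{x_1,\dots,x_n\}|\ge 2$, so it is non-trivially PR.

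For the ``only if'' direction, suppose $Q$ is non-trivially PR. Here the key observation is that $Q$ is a polynomial with no constant term in which every monomial contains a single variable: the linear part contributes the monomials $c_ix_i$, and $P(y)$ contributes monomials in the single variable $y$ only. Thus Theorem \ref{general non-PR result} applies and yields a nonempty maximal homogeneous set of indexes $I\subseteq\text{supp}(Q)$ with $\sum_{\alpha\in I}c_\alpha=0$. Now I would analyze which homogeneous sets of maximal indexes can occur. The monomials of $Q$ are: $c_1x_1,\dots,c_nx_n$ (all of degree $1$) and the monomials of $P(y)$, which have degrees ranging over the support of $P$; since $P$ is nonlinear, $P$ has a monomial of degree $\ge 2$, and hence the unique monomial of $P$ of maximal degree $d=\deg P\ge 2$ is a maximal index of $\text{supp}(Q)$. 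Because $I$ is \emph{homogeneous} (all indexes in $I$ have the same length) and \emph{maximal}, and the degree-$1$ monomials $c_ix_i$ are maximal only among themselves (each $x_i$ appears in $Q$ to the first power as its unique power, so no index $\alpha(i)$ is dominated by another), the set $I$ must be either a subset of $\{\alpha(1),\dots,\alpha(n)\}$ (the linear indexes) or else lie entirely inside $\text{supp}(P)$ at degree $d$; but the latter case gives a single index $\alpha$ with $c_\alpha\ne 0$, contradicting $\sum_{\alpha\in I}c_\alpha=0$. Therefore $I=\{\alpha(j)\mid j\in J\}$ for some nonempty $J\subseteq\{1,\dots,n\}$, and $\sum_{\alpha\in I}c_\alpha=\sum_{j\in J}c_j=0$, as desired.

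The main obstacle is the bookkeeping in the ``only if'' direction: one must argue carefully that a maximal homogeneous set of indexes in $\text{supp}(Q)$ cannot mix a linear monomial $c_ix_i$ with a monomial of $P$, and cannot consist solely of monomials of $P$. Mixing is impossible unless $P$ has a linear term, but even a linear term of $P$, say $cy$, gives the index corresponding to $y^1$ which has the same length $1$ as the $\alpha(i)$; one then notes that adding $cy$ to the linear Rado part only enlarges the admissible $J$ (or one simply incorporates it), so the conclusion is unaffected — indeed if $P$ were allowed a linear term one would fold it in and reduce to the purely-linear-plus-nonlinear-tail situation. Since the statement explicitly assumes $P$ is nonlinear and the corollary is about the existence of \emph{some} vanishing subsum among the $c_j$, this subtlety does not change the outcome; I would just remark that the maximal degree monomial of $P$ forces any maximal homogeneous $I$ intersecting $\text{supp}(P)$ nontrivially to reside at that top degree as a singleton, which is incompatible with a vanishing coefficient sum. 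This completes the proof.
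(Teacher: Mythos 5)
Your proof is correct and follows essentially the same route as the paper: Theorem \ref{Generalized Rado} gives sufficiency, and Theorem \ref{general non-PR result} gives necessity once one observes that, $P$ being nonlinear, a maximal homogeneous set $I\subseteq\text{supp}(Q)$ with vanishing coefficient sum cannot meet $\text{supp}(P)$ and must therefore consist of linear indexes $\alpha(j)$, $j\in J$. Your aside that a linear term $cy$ of $P$ would ``only enlarge the admissible $J$'' is the one imprecise remark (a vanishing subsum involving $c$ would not by itself produce the required $J\subseteq\{1,\dots,n\}$), but your actual resolution --- that no index of $\text{supp}(P)$ below top degree is maximal, and that the top-degree index would force $I$ to be a singleton with nonzero coefficient, which is impossible --- is exactly the point the paper's two-line proof leaves implicit.
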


\begin{proof} 
One implication is Theorem \ref{Generalized Rado}.
Conversely, by Theorem \ref{general non-PR result}, 
there exists a nonempty maximal homogeneous set of indexes 
$I\subseteq\text{supp}(Q)$ such that 
$\sum_{\alpha\in I} c_{\alpha}=0$. 
As $P(y)$ is nonlinear, all the indexes in $I$ correspond to monomials 
of $a_{1}x_{1}+\dots+a_{n}x_{n}$, and hence 
$\sum_{\alpha\in I} c_{\alpha}=0$ actually
means $\sum_{j\in J}c_j=0$ for a suitable nonempty 
$J\subseteq\{1,\ldots,n\}$.
\end{proof}

Let us itemize some explicit examples of polynomials whose non-PR 
is proved by our results.

\begin{ex}
{\rm 
The equation
$x-2y=P(z)$ is \emph{not} partition regular
for any nonlinear polynomial $P(z)\in\Z[z]$. 
This gives a negative answer to Question 11 (iii)
posed by V. Bergelson in \cite{B96}.}
\end{ex}

\begin{ex}
{\rm The equation $x+y=z^2$ is not PR, except
for the constant solution $x=y=z=2$. (This was
first proved by P. Csikv\'ari, K. Gyarmati and A. S\'ark\"{o}zy
in \cite{CGS12}.)}
\end{ex}

\begin{ex}
{\rm A. Khalfah and E. Szemer\'edi \cite{KS06} proved that 
if $P(z)\in\Z[z]$ takes even values
on some integer, then for every finite coloring
the equation $x+y=P(z)$ has a solution where $x$ and $y$ are monochromatic.
However, as a consequence of Corollary \ref{characterization}, 
it is never the case
that $x+y=P(z)$ is partition regular when $P$ is nonlinear.}
\end{ex}

\begin{ex}
{\rm In \cite{DNR16},
it is proved that the following polynomials $x^{n}+y^{m}=z^{k}$ 
are not PR for $k\notin\{n,m\}$. This result is
obtained as a particular case of Corollary \ref{carino}. }
\end{ex}

\section{Final remarks and open questions}

In the last years, the interest on problems related to 
the partition regularity of nonlinear Diophantine equations has 
been rising constantly (see, \emph{e.g.},
\cite{KS06,H11,CGS12,SV14,LB14,LB15,BJM15,DNR16,FH16,HKM16}).  
We hope that this paper will contribute to a general Ramsey 
theory of nonlinear Diophantine equations. In this direction, 
we think that at least four distinct directions of research are 
worth pursuing.

The first one is trying to extend our results so to fully characterize 
the class of nonlinear PR Diophantine equations on $\N$
in ``Rado's style'', \emph{i.e.} by means of decidable simple 
conditions on coefficients and exponents.\footnote
{~Here the word ``decidable'' has the precise sense
as defined in compatibility theory to formalize
the idea of an ``effective method'' .}
As the general problem seems highly complicated,
it would surely be helpful to start by isolating other classes 
of PR and non-PR equations. 
For example, we think that it would be really interesting to 
find a solution to the following.

\bigskip
\noindent
\textbf{Open Problem 1.}
\emph{Under the additional assumption that the given 
equation admits solutions in $\N$,
can the implication in Theorem \ref{general non-PR result} or, at least, 
in Corollary \ref{carino}, be reversed? \footnote
{~The hypothesis on the existence of solutions is
needed, as otherwise the conjecture would be false, 
as shown, \emph{e.g.}, by 
Fermat equations $x^n+y^n=z^n$ with $n\ge 3$.}}

\bigskip
Notice that a positive answer to this question 
would entail the PR of the Pythagorean equation 
$x^2+y^2=z^2$, which is probably the most investigated
open problem in this field. It is our opinion that nonstandard 
analysis could play an important role in this research,
also in the positive direction of PR results.
Indeed, techniques based on $u$-equivalence have already 
been used by the second named author in \cite{LB14}
to prove the PR of several classes of nonlinear equations.

A second possible direction of research is to study 
the PR of nonlinear Diophantine equations on sets of numbers
different from the natural numbers.
In this respect, let us point out a few facts.

\begin{enumerate}
\item
A \emph{homogeneous} Diophantine equation is PR on 
$\N$ if and only if it is 
PR on $\Z$ if and only if it is PR on $\Q$.

\item
There are homogeneous Diophantine equations
that are PR on the positive reals $\R_{>0}$ but not on $\N$.

\item
For non-\emph{homogeneous} equations,
the equivalences in (1) do not hold.
\end{enumerate}

The ``only if'' implications in (1) are trivial.
Conversely, let us observe that if $P(x_{1},\dots,x_{n})=0$ 
is a homogeneous Diophantine equation that is PR on $\Q$, then every 
ultrafilter $\U\in\overline{K(\beta\Q,\odot)}$ is a witness.
(This follows from the analogues of Theorem \ref{alpha} and 
Corollary \ref{ultrahomogeneous} for $\Q$.) 
Since the set $\N$ is thick in the group
$(\Q,\cdot)$, and hence piecewise syndetic,
we can pick $\U\in\overline{K(\beta\Q,\odot)}$ such that $\N\in\U$.\footnote
{~Recall that a subset $A\subseteq S$ of a semigroup $(S,\cdot)$
is piecewise syndetic if and only if it belongs to
some ultrafilter in $\overline{K(\beta S,\cdot)}$
(see Corollary 4.41 in \cite{HS11}.)}
Then $\U_\N=\{B\cap\N\mid B\in\U\}$ is an ultrafilter on
$\N$ that witnesses the PR on $\N$ of the equation $P(x_{1},\dots,x_{n})=0$.

Easy examples to show (2) are given by all Fermat equations 
$x^n+y^n=z^n$ with $n\ge 3$, which do not admit solutions in $\N$
but are PR on $\R_{>0}$. Indeed, Schur equation $x+y=z$ is PR,
and by taking the function $\varphi(x)=x^{n}$, which is onto $\R_{>0}$,
one can apply the analogue of Proposition \ref{compose} to the semigroup $\R_{>0}$.

As for (3), consider, \emph{e.g.}, the equation $x_{1}y_{1}-x_{2}=0$. 
By the multiplicative Rado's Theorem, that equation is PR on $\N$, 
and hence it is PR on $\Z$. 
Then, by Proposition \ref{compose} for the group $\Z$
applied to the function $f(x)=-x$, we obtain that also 
$x_{1}y_{1}+x_{2}=0$ is PR on $\Z$, whilst it is has no
solutions in $\N$. 

A general question that arises naturally is the following.

\bigskip
\noindent
\textbf{Open Problem 2.}
\emph{Are there simple decidable conditions
under which a given (non-homogeneous) Diophantine equation 
with no constant term
is PR on $\N$ if and only if it is PR on $\Z$ 
if and only if it is PR on $\Q$?}

\bigskip
A problem that seems to have its own peculiarities
is about the PR of Diophantine equations on finite fields.
About this, a relevant result has been recently
obtained by P. Csikv\'ari, K. Gyarmati and A. S\'ark\"{o}zy \cite{CGS12},  
who proved the PR of every Fermat equation $x^n+y^n=z^n$
on sufficiently large finite fields $\mathbb{F}_p$
(with $xyz\ne 0$).

It seems natural to ask whether the techniques
used in this paper may help towards the following.

\bigskip
\noindent
\textbf{Open Problem 3.}
\emph{Are there simple ``Rado-like'' necessary
and sufficient conditions
under which a given Diophantine equation 
with no constant term is PR on sufficiently large
finite fields $\mathbb{F}_p$?}

\bigskip
Finally, another really wide direction of research is investigating
the PR of finite and infinite systems of nonlinear Diophantine equations.
Whilst certain particular results are already known, such as the multiplicative 
version of Hindman's Theorem, general results in this area are still missing.
It is worth remarking that although extensively studied in the recent
literature (see, \emph{e.g.},
\cite{HL93,DHLL95,HLS02,HLS03,HL06,H07,LR07,DHLS14,
GHL14,SV14,HLS15,BHLS15,BJM15}), 
also infinite linear systems are not fully understood yet.
In order to adapt our nonstandard techniques to 
infinite systems, one would need a  
characterization of PR systems in terms of
$u$-equivalence. The characterization given in 
Corollary \ref{U-equivalence Characterization} is easily
generalized to finite systems, but we do not see how
to extend it to infinite systems. 

\bigskip
\noindent
\textbf{Open Problem 4}
\emph{Is there a characterization of PR infinite systems of 
Diophantine equations in terms of $u$-equivalence?
(Or, equivalently, by means of ultrafilters?)}

\end{document}